\newtheorem{theorem}{Theorem}[section]
\newtheorem{lemma}[theorem]{Lemma}
\newtheorem{corollary}[theorem]{Corollary}
\newtheorem{definition}[theorem]{Definition}
\newtheorem{example}[theorem]{Example}
\newtheorem{fact}[theorem]{Fact}
\newtheorem{remark}[theorem]{Remark}
\newenvironment{proof}
{\begin{trivlist}  \item \textsc{Proof:}~} {\hfill $\Box$
\end{trivlist}}
\newenvironment{proof of claim}
{\begin{trivlist}  \item \textsc{Proof of Claim:}~} {\hfill $\Box$
\end{trivlist}}
\newenvironment{subclaim}
{\begin{trivlist}  \item \textsc{Subclaim:}~} {\end{trivlist}}
\newenvironment{proof of subclaim}
{\begin{trivlist}  \item \textsc{Proof of subclaim:}~} {\hfill $\Box$ (SUBCLAIM)
\end{trivlist}}
\newenvironment{proof of claim1}
{\begin{trivlist}  \item \textsc{Proof of Claim 1.}~} {\hfill $\Box$ (CLAIM 1)
\end{trivlist}}
\newenvironment{proof of claim2}
{\begin{trivlist}  \item \text{Proof of Claim 2.}~} {\hfill $\Box$ (CLAIM 2)
\end{trivlist}}
\newenvironment{proof of claim3}
{\begin{trivlist}  \item \textsc{Proof of Claim 3.}~} {\hfill $\Box$ (CLAIM 3)
\end{trivlist}}
\newcommand{\closure}[1]{\ensuremath{\mathrm{cl}}(#1)}
\newcommand{\interior}[1]{\ensuremath{\mathrm{int}}(#1)}
\def \Def {\operatorname{Def}}
\def \Q{\mathbb{Q}}
\def\Ind#1#2{#1\setbox0=\hbox{$#1x$}\kern\wd0\hbox to 0pt{\hss$#1\mid$\hss}
\lower.9\ht0\hbox to 0pt{\hss$#1\smile$\hss}\kern\wd0}
\def\Notind#1#2{#1\setbox0=\hbox{$#1x$}\kern\wd0\hbox to 0pt{\mathchardef
\nn=12854\hss$#1\nn$\kern1.4\wd0\hss}\hbox to
0pt{\hss$#1\mid$\hss}\lower.9\ht0 \hbox to
0pt{\hss$#1\smile$\hss}\kern\wd0}
\newcommand{\domain}[1]{\ensuremath{\mathrm{domain}}(#1)}
\newcommand{\ma}{\mathfrak{m}}
\newcommand{\bk}{\mathbf{k}}
\begin{document}

\title{Measuring definable sets in o-minimal fields}
\author{Jana Ma\v{r}\'ikov\'{a} and Masahiro Shiota}
\maketitle

\begin{abstract}
We introduce a non real-valued measure on the definable sets contained in the finite part of a cartesian power of an o-minimal field $R$.  The measure takes values in an ordered semiring, the Dedekind completion of a quotient of $R$.  We show that every measurable subset of $R^n$ with non-empty interior has positive measure, and that the measure is preserved by definable $C^1$-diffeomorphisms with Jacobian determinant equal to $\pm 1$.

%We define a finitely additive measure, taking values in an ordered semiring, on the strongly bounded definable sets in an o-minimal field.  This measure assigns a non-zero value to a strongly bounded definable set iff the interior of the definable set is non-empty.
\end{abstract}

\begin{section}{Introduction}

Let $R$ be an o-minimal field, i.e. an o-minimal expansion of a real closed field.
In \cite{nip}, Hrushovski, Peterzil and Pillay ask, roughly, the following question:
Let $B[n]$ be the lattice of all bounded $R$-definable subsets of $R^n$.  Define an equivalence relation $\sim $ on $B[n]$ as follows: $X \sim Y$ if modulo a set of dimension $<n$ we have $\phi (X)=Y$ for some definable $C^1$-diffeomorphism $\phi$ with absolute
value of the determinant of the Jacobian of $\phi$ at $x$ equal to 1 for all $x\in X$.   Suppose $X\in B[n]$ is of dimension $n$.  Is there a finitely additive map $\mu \colon B[n] \to \mathbb{R}^{\geq 0}\cup \{ \infty \}$ which is $\sim$-invariant and such that $\mu X\in \mathbb{R}^{>0}$?

 Note that for cardinality reasons it is impossible to find a real-valued measure that would assign a real non-zero value to every bounded definable set with non-empty interior in some big o-minimal field.

We remark that the answer to the question posed in \cite{nip} is yes if $R$ is pseudo-real\footnote{Let $\mathcal{L}$ be an expansion of the language of ordered rings, and let $T(\mathcal{L})$ be the collection of all $\mathcal{L}$-sentences true in all $\mathcal{L}$-expansions of the reals.  A structure is called pseudo-real if it is a model of $T(\mathcal{L})$.} in the sense of van den Dries (\cite{lou-qu}):
If there is an o-minimal field $\mathcal{S}$ (in the language $\mathcal{L}$) for which the answer to the question posed in \cite{nip} is no, then we can find definable bounded sets $X,Y \subseteq \mathcal{S}^n$ and a positive integer $m$ so that $X\not\sim \emptyset$ and $(m+1) X \dot\cup Y \sim mX$, where $(m+1)X$ is the disjoint union of $m+1$ copies of $X$ (see \cite{nip}, Proposition 5.5, p. 576).  But this fact is expressible by a parameter-free first-order sentence in $\mathcal{L}$, and this sentence is false in all $\mathcal{L}$-expansions of the reals, hence our structure is not pseudo-real.

While the framework of o-minimality was developed with a view towards structures on the reals (see Shiota \cite{shiotabook} and van den Dries \cite{book}), it is well-known that not all o-minimal structures are pseudo-real.  More concretely, Lipshitz and Robinson show in \cite{lr} that the field of Puiseux series $\bigcup_n \mathbb{R}((t^{\frac{1}{n}}))$ in $t$ over $\mathbb{R}$ expanded by functions given by overconvergent power-series (henceforth the L-R field) is o-minimal, and Hrushovski and Peterzil show in \cite{uk} that the L-R field is not pseudo-real.

Let $V$ be the convex hull of $\mathbb{Q}$ in $R$.  Then $V$ is a convex subring of $R$, hence a valuation ring.  Let $\pi \colon V\to \bk$ be the corresponding residue/standard part map.  The corresponding residue field $\bk$ is the ordered real field $\mathbb{R}$ if $R$ is at least $\omega$-saturated.
In \cite{bo}, Berarducci and Otero define a measure on the lattice $SB[n]$ of all strongly bounded definable subsets of $R^n$, i.e. the definable subsets of $V^n.$  Assuming that $R$ is at least $\omega$-saturated, one way to define the Berarducci-Otero measure is to assign to $X\in SB[n]$ the Lebesgue measure of $\pi X$.  It was shown in \cite{real} that the Berarducci-Otero measure is $\sim$-invariant, which yields a partial answer to the question posed in \cite{nip}: The answer is yes whenever the set $X\in B[n]$ in question is contained in $V^n$, and $\pi X$ has non-empty interior.  However, the Berarducci-Otero measure assigns zero to every set whose standard part has empty interior. 

In this paper we drop the requirement of the measure being real-valued.  More precisely, we define a map $\mu \colon SB[n]\to \widetilde{V}$, where $\widetilde{V}$ is an ordered semiring, such that for all $X,Y \in SB[n]$, $\mu (X\dot\cup Y) = \mu X + \mu Y$, and $\mu X>0$ iff the interior of $X$ is nonempty (see Theorem \ref{maintheorem}).
The underlying set of $\widetilde{V}$ is constructed as the Dedekind completion of a quotient of $V^{\geq 0}$.  
%The equivalence relation on $V^{\geq 0}$ can be extended to an equivalence relation on $R^{\geq 0}$ while maintaining the ordered semiring structure.
%However, the semiring operations would not be compatible with the operations on the original structure anymore, which forces us to restrict the measurable sets to $SB[n]$.   
The construction of the measure itself resembles the construction of Lebesgue measure.  Taking a quotient of $V^{\geq 0}$ serves the purpose of identifying lower and upper measures.  For measurable sets whose standard part has non-empty interior our measure agrees with the Berarducci-Otero measure.  In fact, the unique minimal ring that embeds $\widetilde{V}$ is $\mathbb{R}$.  On the collection of strongly bounded definable sets whose standard part has empty interior, $\mu$  resembles a dimension function:  
%Let $SB[n]$ be the lattice of all strongly bounded definable subsets of $R^n$, and let $X,Y \in SB[n]$ be such that the interior of the standard part of $X$ and of $Y$ respectively is empty.  
There, we have $\mu (X\dot\cup Y)=\max \{ \mu X , \mu Y \}$, and if $\mu X < \mu Y$, then $X$ can be isomorphically embedded (in the sense of \cite{nip}) into finitely many copies of $Y$ (this follows from Lemmas \ref{box} and \ref{boxlemma}).  We do not know if the strict inequality above can be replaced by a nonstrict one.

We show that $\mu$ has the analogue of the invariance property defined in \cite{nip}: Suppose $X, Y\subseteq V^n$ are definable and $\phi \colon U \to V$ is a definable $C^1$-diffeomorphism with $X\subseteq_0 U$, $Y \subseteq_0 Y$ and $|J\phi (x)| =1$, where $J\phi (x)$ is determinant of the Jacobian of $\phi$ at $x$.  Then $\mu X = \mu Y$ (see Corollary \ref{invariance}).

In the case of the L-R field we can modify the definition of $\mu$ to obtain a finitely additive measure on all of $B[n]$.  This measure takes values in the Dedekind completion of the value group of the standard valuation.  It agrees with $\mu$ for sets $X\in SB[n]$ so that $\interior{\pi X}=\emptyset$, but assigns the same value to all sets $X \in SB[n]$ with $\interior{\pi X}\not=\emptyset$.

\medskip\noindent
We thank Michel Coste and Marcus Tressl for their advice.  The first author whishes to thank the second author for his hospitality during a visit to Nagoya, Japan.  
%Thanks also go to C. Rosendal for generously giving away one of his treasured plain notebooks so the first author could write in it.

\end{section}

\begin{section}{Notation and conventions}
The letters $k, l, m,n$ denote non-negative integers.

Let $M$ be a structure.  Then {\em $M$-definable\/} (or simply {\em definable\/}, if $M$ is clear from the context) means definable in the language of $M$, with parameters from $M$.
We denote by $\Def^n (M)$ the collection of all $M$-definable subsets of $M^n$.

We fix $V$ to be the convex hull of $\Q$ in $R$.  Then $V$ is a convex subring of $R$, hence a valuation ring, with residue (standard part) map $\pi \colon V \to \bk$, maximal ideal $\ma$, and (ordered) residue field $\bk$.  For $X\subseteq R^n$ we set $\pi X=\pi (X \cap V^n )$.  We denote by $v$ the corresponding valuation $R\to \Gamma \cup \{ \infty \}$, where $\Gamma = R^{\times}/(V \setminus \ma )$ is the (divisible ordered abelian) value group.

%For two sets $X,Y$ we write $X\subset Y$ to mean $X\subseteq Y$ and $X\not= Y$.
%$R^0$ is the one-point space.

Let $M$ be an o-minimal structure.  For $k< n$ we denote by $p^{n}_{k}$ the projection map $M^n \to M^k$ given by $x\mapsto (x_1 , \dots ,x_k )$.  If $Y \subseteq M^n$ is definable and non-empty and $x\in M^n$, then $$d(x,Y):=\inf \{d(x,y):\; y\in Y \},$$ where $d(x,y)$ is the euclidean distance between $x$ and $y$.
For $X, Y\subseteq M^n$ we write $X\subseteq_0 Y$ if $\dim{(X\setminus Y)}<n$, and 
$X=_0 Y$ if $X\subseteq_0 Y$ and $Y \subseteq_0 X$.
If $f\colon X\to M$, where $X\subseteq M^n$, is a function, then $$\Gamma f:=\{ (x,y):\; x\in X \mbox{ and } f(x)=y \}$$ is the graph of $f$.

For $X\subseteq R$ we set $X^{\geq r}:=\{ x\in X\colon \, x\geq r  \}$.  The sets $X^{\leq r}$, $X^{<r}$, and $X^{>r}$ are defined similarly.  If $Y$ is another subset of $R$, then $X^{>Y}$ is the set $$\{ x\in X: x>y \; \mbox{ for all } y \in Y  \}.$$  The set $X^{<Y}$ is defined similarly.

A box in $R^n$ is a set of the form $[a_1 , b_1 ]\times \dots \times [a_n ,b_n ]$, where $a_i < b_i$ and $a_i , b_i \in R^{>0}$.

If $X\subseteq M^n$, then $\closure{X}$ denotes the closure of $X$ and $\interior{X}$ denotes the interior of $X$ with respect to the interval topology on $M$.
\end{section}

\begin{section}{The set of values $\widetilde{V}$}
In this section we define the set of values $\widetilde{V}$ of our measure, and we show that it can be equipped with the structure of an ordered semiring.

First, we define an equivalence relation $\sim$ on $V^{\geq 0}$.
\begin{definition}\label{equivalence}
Let $x,y \in V^{\geq 0}$.  Then $x\sim y$ if either
\begin{itemize}
\item both $x$ and $y$ are in $\ma^{\geq 0}$, and $$y^q \leq x \leq y^p \mbox{ for all } p,q\in \Q^{>0}, \; p<1 , \; q>1, \mbox{ or}$$
%\item If $x,y >V_0$, then $$x \sim y \mbox{ iff }  y^p \leq x \leq y^q \mbox{ for all } p,q\in \Q^{>0}, \; p<1 , \; q>1.$$ (Equivalently, for $x,y>V_0$ we have $x\sim y$ iff $\frac{1}{x}\sim \frac{1}{y}$.)
\item both
$x$ and $y$ are $>\ma$, and $\pi x = \pi y$.
\end{itemize}
\end{definition}
Note that the ordering $\leq$ on $R$ induces an ordering $\leq$ on $V^{\geq 0}/\sim $.  For $x\in V^{\geq 0}$ we denote by $[x]$ the $\sim$-equivalence class of $x$.
%\begin{remark}
%{\em The equivalence relation $\sim$ could have been defined on $V^{\geq 0}\setminus \{1 \}$ (and, in fact, on $R^{\geq 0}\setminus \{1\}$) in a more uniform fashion by setting for $x,y<1$: $x\sim y$ iff $y^q \leq x \leq y^p$ for all $p,q\in \mathbb{Q}$ with $p<1$ and $q>1$, and by setting for $x,y>1$: $x\sim y$ iff $y^p \leq x \leq y^q$ for all $p,q\in \mathbb{Q}$ with $p<1$ and $q>1$.\/}
%\end{remark}

\smallskip\noindent
In the next definition a {\em Dedekind cut in $V^{\geq 0}/\sim$\/} is the union of a downward closed subset of $V^{\geq 0}/\sim$ without a greatest element with the set $V^{<0}/\sim$, where $\sim$ is extended to $V^{<0}$ by setting $x\sim y$ iff $-x \sim -y$, for $x,y \in V^{<0}$.

\begin{definition}\label{operations}
We let $\widetilde{V}$ be the collection of all Dedekind cuts in $V^{\geq 0}/ \sim$.  
%Then $\mathbb{R}^{\geq 0} \subseteq \widetilde{R}$, and the ordered field $R$ naturally induces an ordering, addition and multiplication on $\tilde{R}$: 
We define an ordering $ \leq $ and binary operations $+$ and $\cdot$ on $\widetilde{V}$ as follows. 
Let $X,Y \in \widetilde{V}$.  Then
\begin{itemize}
\item[a)] $X \leq Y \mbox{ iff } \forall x\in \bigcup X \;\; \exists y \in \bigcup Y$ with $x\leq y$.
\item[b)] $X+Y :=\{ x+y:\, x \in \bigcup X \, \& \, y\in \bigcup Y \}/\sim$.
\item[c)] $X\cdot Y:=\{ x \cdot y:\, x \in \bigcup X^{\geq 0} \, \& \, y\in \bigcup Y^{\geq 0} \}/\sim \cup  \; V^{<0}/\sim $.
%\item[d)] For $X\leq Y$ we set $Y-X=Z$ such that $$\bigcup Z=\{ y-z\colon y\in \bigcup Y \mbox{ and } z>\bigcup X \mbox{ and } y\geq z \}.$$
\end{itemize}
For $a\in V^{\geq 0}$ we denote by $\widetilde{a}$ the cut $$\{ [x] :\; x\in V^{\geq 0} \mbox{ and }[x] <[a]  \} \cup \; V^{<0}/\sim .$$ 
%(this is in contrast to $a/\sim$, which denotes the $\sim$-equivalence class of $a$).
\end{definition}
Next, we show that + and $\cdot$ are well-defined, and that $\sim$ is a congruence.  The lemma below is used throughout the paper without explicit reference.

%\begin{lemma}\label{lemman}
%If $x\in \ma^{\geq 0}_0$ or $x>V_0$, then $nx \sim x$ for all $n=1,2,\dots$.
%\end{lemma}

%\begin{proof}
%Let $n\geq 1$.
%If $x\in \ma_0$, then we need to show that for all $p\in \Q^{>0}$, $p<1$, we have $nx\leq x^p$.  Let $p\in \mathbb{Q}$, $p<1$.  Then $v_0 (nx) = v_0 (x) > p v_0 (x) = v_0 (x^p )$, hence $nx < x^p$.

%Similarly, if $x>V_0$, then we need to show that for all $q\in \mathbb{Q}^{>1}$, we have $nx\leq x^q$.  So let $q\in \mathbb{Q}^{>1}$.  Then $v_0 (nx)=v_0 (x) > qv_0 (x) = v_0 (x^q )$, hence $nx<x^q$.

%\end{proof}

\begin{lemma}
Let $x,y \in \ma^{> 0}$ and suppose $v (x)=v (y)$.  Then $x\sim y$.
\end{lemma}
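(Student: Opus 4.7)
The plan is to unpack what $v(x)=v(y)$ means in terms of $V$ and $\ma$, and then verify the two inequalities in the definition of $\sim$ on $\ma^{\geq 0}$ directly, using only the fact that positive integer (and hence positive rational) powers of an infinitesimal are again infinitesimal.

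First I would observe that $v(x)=v(y)$ means that $x/y$ (and $y/x$) lies in $V\setminus \ma$; since $x,y>0$, both quotients are positive elements of $V$ that are not infinitesimal. Hence I can find positive rationals $c_1 < c_2$ with
\[
c_1\, y \;\leq\; x \;\leq\; c_2\, y.
\]
This reduces the problem to showing that, for all $p,q\in \Q^{>0}$ with $p<1<q$, one has $c_2\, y \leq y^p$ and $y^q \leq c_1\, y$.

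Next, for the upper inequality, I write $p = a/b$ with positive integers $a<b$; then $c_2 y \leq y^p$ is equivalent (after raising both positive sides to the $b$-th power) to $c_2^{b}\, y^{b-a} \leq 1$. Since $b-a$ is a \emph{positive integer} and $y\in \ma^{>0}$, the element $y^{b-a}$ is infinitesimal, so multiplying by the rational constant $c_2^{b}$ still yields an infinitesimal, hence $\leq 1$. The lower inequality $y^q \leq c_1\, y$ is handled symmetrically: writing $q=c/d$ with integers $c>d>0$, I reduce to $y^{c-d} \leq c_1^{d}$, which again holds because $y^{c-d}$ is infinitesimal while $c_1^d$ is a fixed positive rational.

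Both $x,y$ are already in $\ma^{\geq 0}$, so the first clause of Definition \ref{equivalence} applies and the argument above gives $x\sim y$. I do not anticipate any real obstacle; the only slightly delicate point is justifying the use of the rational exponents $y^{a/b}$ and $y^{c/d}$, which I would avoid by immediately clearing denominators (raising to the $b$-th or $d$-th power) and reducing the whole discussion to integer powers of $y$, where the fact ``$y\in \ma \Rightarrow y^n \in \ma$ for $n\in \N^{>0}$'' is immediate from $\ma$ being an ideal.
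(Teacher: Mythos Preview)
Your proof is correct and follows essentially the same line as the paper's. The paper first shows $x\sim nx$ for positive integers $n$ by using $v(x^p)=p\,v(x)<v(x)=v(nx)$ to get $nx\le x^p$, and then sandwiches $y$ as $x<y<nx$; you do the equivalent thing directly with rational bounds $c_1 y\le x\le c_2 y$ and verify the defining inequalities by clearing denominators and working with integer powers, which is a slightly more elementary but otherwise identical argument.
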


\begin{proof}
First note that
$x\sim nx$ for all $n$: If $p\in \mathbb{Q}^{>0}$, $p<1$, then $$v(x^p ) = p \cdot v(x)<v(x)=v(nx),$$ hence $nx\leq x^p$.

% But this is clear, since $$v (x^p ) = p  v (x) < v (x)=v (nx).$$  
%Similarly, if $x>V$ and $q\in \mathbb{Q}^{>1}$, then $$v (nx) = v (x) < q v (x) = v (x^q ),$$
%hence $nx\leq x^q$.
Now assume $x<y$ (the other cases are similar).
Since $v (x)=v (y)$, we have $\frac{y}{x} < n$ for some $n$.  Hence $x<y<nx$, and so $x\sim y$.
\end{proof}

\begin{remark}
\em We do not have $x\sim y$ iff $v(x)=v(y)$ on $\ma^{>0}$.  To see this assume that $R$ is $\omega$-saturated, let $x\in \ma^{>0}$, and let $y$ be any element realizing the type $p(z)$ consisting of all formulas $nx < z < x^p$, where $n=1,2,\dots $ and $p$ ranges over all positive rationals $<1$.  Then $x\sim y$ but $v(x)\not=v(y)$.  
\end{remark}

\begin{lemma}\label{add}
Let $X,Y \in \widetilde{V}$.  Then $X+Y \in \widetilde{V}$.
\end{lemma}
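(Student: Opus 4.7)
To establish $X+Y \in \widetilde V$, I must verify the four defining properties of a Dedekind cut: (i) the operation descends to $\sim$-classes; (ii) $V^{<0}/\sim \subseteq X+Y$; (iii) $X+Y$ is downward closed; (iv) its non-negative part has no greatest element.

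The crucial step is (i), and the key observation is that for $x, y \in \ma^{>0}$ one has $[x+y] = \max\{[x], [y]\}$. Indeed, assuming WLOG $x \leq y$, we have $y \leq x+y \leq 2y$, and $y \sim 2y$: for any rational $p \in (0,1)$ the inequality $2y \leq y^p$ amounts to $y \leq (1/2)^{1/(1-p)}$, which holds since $y \in \ma$ (a symmetric argument for $q > 1$ handles the lower bound). Since $\sim$-classes in $\ma^{>0}$ are convex by definition, this forces $[x+y] = [y] = \max\{[x],[y]\}$. When at least one of $x, y$ lies in $V^{>\ma}$, $[x+y]$ is determined by $\pi(x+y) = \pi x + \pi y$. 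Consequently, if $x \sim x'$ and $y \sim y'$ then $[x+y] = [x'+y']$, so $+$ descends to $\sim$-classes.

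For (ii), both $\bigcup X$ and $\bigcup Y$ contain all of $V^{<0}$, so any $a \in V^{<0}$ can be written as $x + y$ with $x, y \in V^{<0}$, placing $[a] \in X+Y$. For (iii), the convexity of $\sim$-classes implies that $\bigcup X$ is downward closed in $V$; so given $z = x+y \in X+Y$ and $z' \leq z$, the representation $z' = (x - (z-z')) + y$ exhibits $[z']$ as an element of $X+Y$. For (iv), given $[z] = [x+y] \in X+Y$ with $z \geq 0$, I choose $[x_1] > [x]$ in $X$ and $[y_1] > [y]$ in $Y$ (each exists since neither cut has a greatest element); a short case analysis (using the max formula for $\ma^{>0}$-sums and additivity of $\pi$ otherwise) then gives $[x_1 + y_1] > [x+y]$. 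The degenerate case where $X$ or $Y$ equals $\widetilde 0 = V^{<0}/\sim$ is handled by shrinking the negative summand rather than enlarging a non-existent positive one.

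The main obstacle is isolating the identity $[x+y] = \max\{[x],[y]\}$ for $x, y \in \ma^{>0}$; once that is in hand, each of (i)--(iv) follows from it together with the convexity of $\sim$-classes and the additivity of $\pi$.
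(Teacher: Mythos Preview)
Your argument is correct, and the engine behind it --- the identity $[x+y]=\max\{[x],[y]\}$ for $x,y\in\ma^{>0}$, equivalently $y\sim 2y$ --- is exactly what the paper uses as well, phrased there via the valuation as $v(x+y)=v(y)$ when $x\le y$.  Two differences are worth flagging.  First, your step~(i) is not part of what this lemma requires: since $X+Y$ is \emph{defined} as $\{x+y:\;x\in\bigcup X,\ y\in\bigcup Y\}/\!\sim$, it is already a set of $\sim$-classes, and nothing needs to ``descend''.  What you actually prove in~(i) --- that $x\sim x'$ and $y\sim y'$ force $x+y\sim x'+y'$ --- is the content of the \emph{next} lemma in the paper (Lemma~\ref{addcong}).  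It is not wasted, because you use the max formula again in~(iv), but as a verification that $X+Y$ is well-defined it is superfluous.  Second, for~(iv) the paper is leaner: assuming $x\le y$, it enlarges only $y$ to some $y'\in\bigcup Y$ with $[y']>[y]$ and checks $[x+y']>[x+y]$ directly (via $\pi$ when $y>\ma$, via $v$ when $y\in\ma^{>0}$).  Your approach of enlarging both coordinates also works but costs a longer case split; note too that $z\ge 0$ does not force $x,y\ge 0$, so one should first pass to non-negative representatives (e.g.\ replace $(x,y)$ by $(0,x+y)$ using downward closure of $\bigcup Y$) --- a reduction the paper also leaves implicit.
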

 \begin{proof}
It is clear that $X+Y$ is downward closed and contains $V^{<0}/\sim$.  It is left to show that it does not have a greatest element.  Let $x\in \bigcup X$ and $y\in \bigcup Y$.  We may assume $x\leq y$.

If $y >\ma$, take $y' \in \bigcup Y$ so that $[y] \; < [y']$.  Then $|y-y'|>\ma$, so $(x+y) - (x+y')>\ma$, hence $[x+y] <[x+y']$.

So suppose $y\in \ma^{> 0}$.  Let $y'\in \bigcup Y$ be such that $y<y^p <y'$ for some $p\in \mathbb{Q}^{>0}$ with $p<1$.  Then $$v(x+y)=v(y)>p\cdot v(y)\geq v(y')=v(x+y'),$$ and so $[x+y] <  [x+y'] $ because $y\not\sim y^p$.

The case when $Y=\widetilde{0}$ is clear.  
%Let $x\in \bigcup X^{\geq 0}$, $y \in \bigcup Y^{\geq 0}$, $x\leq y$, and let $z\in R$ be such that $x+y \sim z$.  To see that $X+Y \subseteq V/\sim$, it suffices to show that $z \in \bigcup (X+Y)$.
%If $x+y \in V^{>\ma }$, then $x+y \sim z$ implies $x+y+\delta = z$ for some $\delta \in \ma$.  But $y+\delta \sim y$ (as $y >\ma$), and so $z\in \bigcup (X+Y)$.
%If $x+y \in \ma^{>0}$, then 
%$v (x+y) = v (y)$, hence $x+y \sim y$, and so $y\sim z$.
\end{proof}
%It is easy to see that $\widetilde{R}$ is an ordered semi-group with respect to $+$.

\begin{lemma}\label{addcong}
Let $x,y \in V^{\geq 0}$.  Then $\widetilde{x}+\widetilde{y}=\widetilde{x+y}$.
\end{lemma}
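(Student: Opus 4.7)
The plan is to prove both inclusions, splitting throughout into cases driven by the three regimes of addition on $V^{\geq 0}/\sim$: if both $x, y > \ma$ then $[x+y]$ is controlled by $\pi x + \pi y$; if exactly one of $x, y$ lies in $\ma$ then $[x+y]$ equals the class of the other; and if both lie in $\ma^{\geq 0}$ then $[x+y] = \max\{[x],[y]\}$. Since $V^{<0}/\sim$ is contained in both $\widetilde{x+y}$ and $\widetilde{x}+\widetilde{y}$ by construction, I may restrict attention throughout to representatives $z \geq 0$, and I will assume $x, y > 0$ in the main argument, deferring the degenerate cases $x = 0$ or $y = 0$ to the end.

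For the inclusion $\widetilde{x+y} \subseteq \widetilde{x}+\widetilde{y}$, given $[z] < [x+y]$ with $z \geq 0$, I will exhibit $x' \in \bigcup \widetilde{x}$ and $y' \in \bigcup \widetilde{y}$ with $x' + y' = z$. When $[z] < [x]$, the choice $(x', y') = (z, 0)$ works because $y > 0$ forces $[0] < [y]$; the symmetric choice handles $[z] < [y]$. Inspection of the three regimes above shows that the remaining case $[x], [y] \leq [z] < [x+y]$ can occur only when both $x, y > \ma$ (otherwise $[x+y] \leq \max\{[x],[y]\}$, contradicting the hypothesis); in that regime I will use the scaled decomposition $x' = \alpha z$, $y' = (1-\alpha) z$ with $\alpha = x/(x+y) \in V^{>0}$. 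Since $\pi \alpha = \pi x / \pi(x+y) \in (0,1)$, the bound $\pi z < \pi(x+y)$ translates directly into $\pi x' < \pi x$ and $\pi y' < \pi y$, yielding $[x'] < [x]$ and $[y'] < [y]$.

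For the reverse inclusion $\widetilde{x}+\widetilde{y} \subseteq \widetilde{x+y}$, given $x' \in \bigcup \widetilde{x}$ and $y' \in \bigcup \widetilde{y}$ with $z := x' + y' \geq 0$, I will verify $[z] < [x+y]$ by the same regime-based case split, further refined by the signs of $x'$ and $y'$. In the generic regime $x, y > \ma$, the strict inequalities $[x'] < [x]$ and $[y'] < [y]$ (whenever the corresponding summand is non-negative) descend, after applying $\pi$, to $\pi z = \pi x' + \pi y' < \pi x + \pi y = \pi(x+y)$; a negative summand is absorbed by a weaker bound on the other, using $z \leq y'$ (resp.\ $z \leq x'$). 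The mixed regime exploits that $[x'] < [x]$ with $x > \ma$ forces $\pi x' < \pi x$ whether or not $x' \in \ma$, combined with $\pi(x+y) = \pi x$. The infinitesimal regime uses $[x'+y'] \leq \max\{[x'],[y']\}$, which is valid for $x', y' \in \ma^{\geq 0}$ because $x' + y' \leq 2\max(x',y')$ and $2a \sim a$ inside $\ma^{>0}$.

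The main obstacle I anticipate is the degenerate situation $x = 0$ or $y = 0$, where $\widetilde{x}$ (resp.\ $\widetilde{y}$) collapses to $V^{<0}/\sim$ and the scaled decomposition breaks down. I plan to dispatch this by first establishing the auxiliary identity $\widetilde{a} + \widetilde{0} = \widetilde{a}$ for $a \in V^{\geq 0}$; its nontrivial direction, given $[z] < [a]$ with $z \geq 0$, requires producing an intermediate class $[z'] \in \widetilde{a}$ strictly above $[z]$, so that $z = z' + (z-z')$ with $z - z' \in V^{<0}$. The existence of such $[z']$ reduces to the density of $V^{\geq 0}/\sim$ between any two distinct classes: for $a > \ma$ one lifts a standard real strictly between $\pi z$ and $\pi a$, and for $a \in \ma^{>0}$ with $z < a^q$ for some rational $q > 1$ one may take $z' = a^{q''}$ for any rational $q'' \in (1, q)$, by the same kind of computation used in the proof of Lemma \ref{add}.
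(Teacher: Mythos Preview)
Your argument is correct. You establish both cut inclusions directly by elementwise case analysis, and each case checks out: the scaled decomposition in the regime $x,y>\ma$ works because $\pi\alpha\in(0,1)$ and $\pi z<\pi(x+y)$; the claim that $[x],[y]\le[z]<[x+y]$ forces $x,y>\ma$ is right since in the other regimes $[x+y]=\max\{[x],[y]\}$; and your density argument for the degenerate case $\widetilde a+\widetilde 0=\widetilde a$ is the right way to produce an intermediate class.

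The paper's route is different and considerably shorter. Rather than chasing elements of the two cuts, the paper argues that it suffices to show the two facts (i) $x'\sim x$, $y'\sim y$ implies $x'+y'\sim x+y$, and (ii) every $z\sim x+y$ decomposes as $z=x'+y'$ with $x'\sim x$, $y'\sim y$; that is, $\sim$ is a congruence for addition and the class $[x+y]$ is exactly the set of such sums. From there only the case $y\in\ma^{>0}$ (with $x\le y$) needs explicit treatment, using $v(x+y)=v(y)$. What this buys is brevity and a clean structural statement (addition descends to $V^{\ge 0}/\sim$), at the cost of an unstated reduction step that the reader must reconstruct. Your approach is more elementary and self-contained: it avoids that reduction entirely, at the price of a longer case analysis and a separate treatment of $\widetilde 0$. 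Either proof is fine; yours makes the Dedekind-cut bookkeeping fully explicit, while the paper's isolates the algebraic heart of the matter.
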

\begin{proof}
We may assume that $x\leq y$.  It suffices to show that if $x'\sim x$ and $y'\sim y$, then $x'+y'\sim x+y$, and if $z\sim x+y$, then there are $x'\sim x$ and $y'\sim y$ so that $z=x'+y'$.  The cases when $y=0$ and when $y>\ma$ are clear.

So suppose $y\in \ma^{> 0}$. If $x'\sim x$ and $y'\sim y$, then $v(x'+y')=v(y')$ and $v(x+y)=v(y)$, so $$x'+y' \sim y' \sim y \sim x+y.$$  If $z\sim x+y$, then, since $v(x+y)=v(y)$, we have $z\sim y$, and so $x'=x$ and $y'=z-x$ work.
%  To see that $\widetilde{a}+\widetilde{b}\subseteq \widetilde{a+b}$, let 
%$x \sim a+b$.  It suffices to show that $\widetilde{a}+\widetilde{b}<\widetilde{x}$.  But this follows from $x\sim a'+b'$ for any $a'\sim a$ and $b'\sim b$.
%The other inclusion is proved just as Lemma \ref{add}.
\end{proof}

\begin{lemma}\label{mult}
Let $X,Y \in \widetilde{V}$.  Then $X\cdot Y \in \widetilde{V}$.
\end{lemma}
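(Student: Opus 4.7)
The plan is to verify that $X \cdot Y$ satisfies the three conditions defining a Dedekind cut in $\widetilde{V}$: containment of $V^{<0}/\sim$ (immediate from the definition), downward closure of its non-negative part in $V^{\geq 0}/\sim$, and absence of a greatest element. I would first handle the degenerate case $X = \widetilde{0}$ or $Y = \widetilde{0}$, where $\bigcup X^{\geq 0}$ or $\bigcup Y^{\geq 0}$ is empty and so $X \cdot Y = V^{<0}/\sim = \widetilde{0}$ is trivially in $\widetilde{V}$. In the main case both cuts contain positive classes and, by downward closure, also $[0]$.

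For downward closure: given $[z] \in V^{\geq 0}/\sim$ with $[z] \leq [xy]$ for some $x \in \bigcup X^{\geq 0}$, $y \in \bigcup Y^{\geq 0}$, the case $[z] = [xy]$ is immediate; otherwise I pick representatives with $z < xy$, which forces $y > 0$. Setting $x' := z/y$ gives $0 \leq x' \leq x$, so $[x'] \leq [x] \in X$ and hence $[x'] \in X$ by downward closure of $X$, and $z = x'y$ displays $[z]$ as a member of $X \cdot Y$.

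For absence of a greatest element, the substantive part: given $x \in \bigcup X^{\geq 0}$, $y \in \bigcup Y^{\geq 0}$, I use the no-greatest-element property of $X$ and $Y$ to pick $\xi \in \bigcup X^{\geq 0}$ with $[\xi] > [x]$ and $\eta \in \bigcup Y^{\geq 0}$ with $[\eta] > [y]$, and argue $[\xi\eta] > [xy]$. The cases $x = 0$ or $y = 0$ (reducing to $\xi\eta > 0$, using $[\xi], [\eta] > [0]$) and $x, y > \ma$ (handled via $\pi(\xi\eta) = \pi\xi \cdot \pi\eta > \pi x \cdot \pi y$ with all four being positive reals) are routine. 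The essential case is when at least one of $x, y$ lies in $\ma^{>0}$. Here I would extract the following consequence of Definition \ref{equivalence}: for $u \in \ma^{>0}$ and $u' \in V^{\geq 0}$ with $[u'] > [u]$ there is a rational $p \in (0, 1)$ with $v(u') \leq p \cdot v(u)$ (forced by $u \not\sim u'$ when $u' \in \ma$, and trivial when $u' > \ma$ since then $v(u') = 0$). Applying this to whichever of $x, y$ lie in $\ma^{>0}$ (and taking the larger rational when both apply), one obtains $v(\xi\eta) \leq p \cdot v(xy)$ for some rational $p \in (0, 1)$; picking a rational $p' \in (p, 1)$ then shows $\xi\eta \not\sim xy$ via the defining inequalities of $\sim$, and since $\xi\eta > xy$ (strict, because $\xi \geq x$, $\eta > y$, $x > 0$, or $x > 0$, $\eta \geq y$, $y > 0$), we conclude $[\xi\eta] > [xy]$.

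The main obstacle is the subcase where both $x$ and $y$ lie in $\ma^{>0}$. Naively boosting only one coordinate --- replacing $x$ by $\xi$ with $[\xi] > [x]$ while keeping $\eta = y$ --- can fail to strictly increase $[xy]$ in the $\sim$-ordering, because in a value group with several archimedean classes the sums $v(\xi) + v(y)$ and $v(x) + v(y)$ become archimedean-close whenever $v(y)$ dominates the gap between $v(x)$ and $v(\xi)$. The remedy, paralleling the proof of Lemma \ref{add}, is to inflate both coordinates simultaneously via rationals $p_1, p_2 \in (0, 1)$ and set $p := \max(p_1, p_2)$, so that $v(\xi) + v(\eta) \leq p_1 v(x) + p_2 v(y) \leq p \cdot v(xy)$ and the multiplicative gap survives.
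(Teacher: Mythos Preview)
Your proposal is correct and follows essentially the same route as the paper. The paper declares downward closure ``clear'' without argument and then handles the no-greatest-element step by the same two-coordinate boost you describe: in the case $x,y\in\ma^{>0}$ it finds a common rational $p<1$ with $x<x^p<x'$ and $y<y^p<y'$, concluding $xy<(xy)^p<x'y'$, which is exactly your valuation inequality $v(\xi\eta)\le p\cdot v(xy)$ rewritten multiplicatively. The only cosmetic differences are that you spell out downward closure explicitly and phrase the key estimate via $v$ rather than via powers.
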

\begin{proof}
It is clear that $X\cdot Y$ is a downward closed subset of $V/\sim$ containing $V^{<0}/\sim$.  It is left to show that $X\cdot Y$ does not have a greatest element.
The case when there is $x\in (\bigcup X)^{>\ma}$ and $y\in (\bigcup Y)^{>\ma}$ is clear, as is the case when $X=\widetilde{0}$ or $Y=\widetilde{0}$.

So suppose $x\in \bigcup X$ and $y\in \bigcup Y$ and assume $x\leq y$.  If $x\in \ma^{>0}$ and $y>\ma$, then  $[xy] < [x'y]$ for any $x'\in \bigcup X$ with $[x] < [x']$.
If $x\in \ma^{>0}$ and $y\in \ma^{>0}$, then we can find $p \in \mathbb{Q}^{>0}$, $p<1$ so that $x<x^p<x'$ and $y<y^p<y'$ for some $x'\in \bigcup X$ and $y'\in \bigcup Y$.  Then $xy<x^p y^p <x'y'$, hence $[xy] <[x'y']$.
%The case when $X=\widetilde{0}$ or $Y=\widetilde{0}$ is clear.
%Let $x\in \bigcup X^{> 0}$, $y\in \bigcup Y^{> 0}$, $x \leq y$, and let $z$ be such that $xy\sim z$ and $xy<z$.  To see that $X\cdot Y \subseteq V/\sim$, it suffices to show that $z\in \bigcup X\cdot Y$.  We need to consider two cases.
%\begin{trivlist}
%\item[{\bf Case 1.\/}] Suppose $x\cdot y \in V^{>\ma}$.  Then $x,y \in V^{>\ma}$, and $xy +\delta =z$ for some $\delta \in \ma$.  Hence $xy+\delta =x(y+ \frac{\delta }{x})$, where $\frac{\delta}{x} \in \ma$, and so $z\in \bigcup X\cdot Y$.
%\item[{\bf Case 2.\/}] Let $x y\in \ma^{> 0}$.  
%It suffices to prove that $x\sim \frac{z}{y}$ (as then $z=\frac{z}{y}\cdot y \in \bigcup X\cdot Y$).  Assume towards a contradiction that this is not the case.  Then, as $x<\frac{z}{y}$, we would have $x^{p}<\frac{z}{y}$ for a positive rational $p<1$.  Moreover, since $xy\sim z$, we have $z\leq x^q y^q$ for all positive rationals $q<1$.  Thus $yx^p <x^q y^q$ for all $q<1$, $q\in \mathbb{Q}^{>0}$.  Then $x^{p-q} <y^{q-1}$ for all $q<1$, $q\in \mathbb{Q}^{>0}$.  For $q=\frac{p+1}{2}<1$ we obtain $x^{\frac{p}{2}-\frac{1}{2}}<y^{\frac{p}{2}-\frac{1}{2}}$, where $\frac{p}{2}-\frac{1}{2}<0$ (as $p<1$), a contradiction with $x\leq y$.
%\end{trivlist}
\end{proof}

\begin{lemma}\label{multcong}
Let $x,y \in V^{\geq 0}$.  Then $\widetilde{x}\cdot \widetilde{y} = \widetilde{xy}$.
\end{lemma}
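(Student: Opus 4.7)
The plan mirrors Lemma \ref{addcong}. Assuming $x \leq y$ without loss of generality, it suffices to verify (i) that $\sim$ respects multiplication, so $x' \sim x$ and $y' \sim y$ imply $x'y' \sim xy$, and (ii) that every $z \sim xy$ admits a factorization $z = x'y'$ with $x' \sim x$ and $y' \sim y$; together with the downward closure from Lemma \ref{mult}, this yields $\widetilde{x} \cdot \widetilde{y} = \widetilde{xy}$.

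The cases $y=0$ and $y > \ma$ are immediate. If $y=0$ then $x=0$ and both sides equal $\widetilde{0}$. If $y > \ma$, then either $x=0$ (handled similarly) or $x > \ma$, in which case multiplicativity of $\pi$ on $V\setminus\ma$ gives (i), and for (ii) the assignment $x' = x$, $y' = z/x$ works since $\pi(y') = \pi(z)/\pi(x) = \pi(y)$.

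The substantive case is $y \in \ma^{>0}$, so $x \in \ma^{>0}$ as well (the subcase $x=0$ being trivial). For (i) I would chain the inequalities
\[
(xy)^Q = x^Q y^Q \leq x'y' \leq x^P y^P = (xy)^P \qquad \text{for all rationals } P<1<Q,
\]
applying the defining inequalities $x^Q \leq x' \leq x^P$ and $y^Q \leq y' \leq y^P$ of $\sim$ termwise.

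The main obstacle is the factorization (ii) in this case. I would set $y' = y$ and $x' = z/y$, so $x'y' = z$ by construction. To verify $x' \sim x$, one unpacks $z \sim xy$ as the valuation bounds $P \cdot v(xy) \leq v(z) \leq Q \cdot v(xy)$ for all rationals $P<1<Q$, and translates them into $P' \cdot v(x) \leq v(z)-v(y) \leq Q' \cdot v(x)$ for all rationals $P'<1<Q'$. The delicate point, and the reason for assuming $x \leq y$, is that $v(x) \geq v(y)$ places $v(xy)=v(x)+v(y)$ in the same archimedean class of $\Gamma$ as $v(x)$, so one can rescale between the two bounds by choosing the rational $P',Q'$ suitably. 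This gives $x' \sim x$, and together with $y' = y \sim y$ completes (ii).
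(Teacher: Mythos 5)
Your case analysis in the second paragraph is incomplete. With $x \leq y$ and $y > \ma$, it does not follow that $x = 0$ or $x > \ma$: $x$ can be a positive infinitesimal sitting below a non-infinitesimal $y$. This mixed case $x \in \ma^{>0}$, $y > \ma$ is covered by neither of your arguments. The $\pi$-multiplicativity argument does not apply because $xy \in \ma^{>0}$, so $x'y' \sim xy$ must be verified through the infinitesimal branch of Definition \ref{equivalence}, not via $\pi$; and the termwise chain you use for the $\ma$--$\ma$ case also fails here, since $y' \sim y$ with $y, y' > \ma$ does not yield bounds of the form $y^Q \leq y' \leq y^P$ (if $\pi y \neq 1$ these are simply false). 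The paper handles this case via the lemma that equal valuation implies $\sim$ on $\ma^{>0}$: since $v(y)=v(y')=0$, one has $v(x'y')=v(x')$ and $v(xy)=v(x)$, hence $x'y' \sim x' \sim x \sim xy$; and for the factorization, $x'=z/y$, $y'=y$ still works, since $v(x')=v(z)$ and $z\sim xy$ together with $v(xy)=v(x)$ give $p\,v(x)<v(x')<q\,v(x)$ for all rationals $p<1<q$, whence $x'\sim x$.

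Apart from that omission your argument is sound, and your treatment of part (ii) in the $\ma$--$\ma$ case is a genuinely different route from the paper's. The paper argues by contradiction: assuming $x^p < z/y$ for some rational $p<1$ and combining with $z \leq (xy)^q$, it derives $x^{(p-1)/2} < y^{(p-1)/2}$, which contradicts $x\leq y$ since the exponent is negative. You instead bound $v(z/y)$ directly and rescale using $v(y)\leq v(x)$, which is cleaner and, incidentally, also covers the mixed case above (there $v(y)=0$ and the rescaling is trivial). The one thing to keep in mind is that translating between the order-theoretic definition of $\sim$ on $\ma^{>0}$ and valuation inequalities requires strict inequalities; but requiring the weak bounds $p\,v(xy)\leq v(z)\leq q\,v(xy)$ for \emph{all} rationals $p<1<q$ already forces strictness, so the step is fine once this is noted.
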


\begin{proof}
We may assume that $x\leq y$.
It suffices to show that if $x'\sim x$ and $y'\sim y$, then $x'y' \sim xy$, and if $z\sim xy$, then there are $x'\sim x$ and $y'\sim y$ so that $z\sim x'y'$.  It is easy to check that the lemma holds if $x,y>\ma$ or if $x=0$.  

So suppose $x\in \ma^{>0}$, and let $x'\sim x$ and $y'\sim y$.  If $y>\ma$, then $v(x'y')=v (x')$ and $v (xy)=v (x)$, hence $x'y'\sim x'\sim x\sim xy$.  If $y \in \ma$, then $x'y'\sim xy$ is immediate from the definition of $\sim$.

Now let $z\sim xy$ and assume $xy<z$.  It suffices to prove that $x\sim \frac{z}{y}$ (as then $z=\frac{z}{y}\cdot y \in \bigcup X\cdot Y$).  Assume towards a contradiction that this is not the case.  Then, as $x<\frac{z}{y}$, we would have $x^{p}<\frac{z}{y}$ for a positive rational $p<1$.  Moreover, since $xy\sim z$, we have $z\leq x^q y^q$ for all positive rationals $q<1$.  Thus $yx^p <x^q y^q$ for all $q<1$, $q\in \mathbb{Q}^{>0}$.  Then $x^{p-q} <y^{q-1}$ for all $q<1$, $q\in \mathbb{Q}^{>0}$.  For $q=\frac{p+1}{2}<1$ we obtain $x^{\frac{p}{2}-\frac{1}{2}}<y^{\frac{p}{2}-\frac{1}{2}}$, where $\frac{p}{2}-\frac{1}{2}<0$ (as $p<1$), a contradiction with $x\leq y$.

The case when $z\sim xy$ and $z<xy$ is handled similarly and left to the reader.
\end{proof}

\smallskip\noindent
From now on we shall assume that $R$ is $\omega$-saturated, in order to have $\bk =\mathbb{R}$.  This is no loss of generality: By Theorem 3.3 in \cite{cj}, for any elementary extension $R'$ of $R$, the structure $(R',V')$, where $V'$ is the convex hull of $\mathbb{Q}$ in $R'$, is an elementary extension of $(R,V)$.
\begin{remark}\label{rem}
\em
\begin{itemize}
\item It is now easy to check that $(\widetilde{V}, \leq , +, \cdot , \widetilde{0}, \widetilde{1})$ is an ordered semiring. 
\item The Dedekind completion of $V^{>\ma}/\sim$ is $\mathbb{R}^{>0}$.  We shall thus feel free to identify this part of $\widetilde{V}$ with $\mathbb{R}^{>0}$. For $a \in \mathbb{R}^{>0}$ we shall sometimes write $\widetilde{a}$ to indicate that $a$ is viewed as an element of $\widetilde{V}$.  Since $R$ is $\omega$-saturated, for any  $a\in \mathbb{R}^{>0}$, $\widetilde{a}=\widetilde{r}$ for some $r\in V^{>\ma}$.

%Also note that if $X\in \widetilde{V}$ and if there is $a\in \bigcup X$ so that $a>\ma$, then there is $x \in R^{> \ma}$ with $X=\widetilde{x}$ (by $\omega$-saturation of $R$).

\item Let $X,Y \in \widetilde{V}$.  
\begin{trivlist}
%\item[i)] If there is $a\in \bigcup X$ so that $a>\ma$, then there is $x \in R^{> \ma}$ with $X=\widetilde{x}$ (by $\omega$-saturation of $R$).

%\item[i)] If $X=\widetilde{x}$ and $Y=\widetilde{y}$ where $x,y \in V^{>\ma}$, then $X+Y = \widetilde{x+y}$.

\item[i)] If $X \in \mathbb{R}^{>0}$ and $Y \not\in \mathbb{R}^{>0}$, then $X+Y =  X$.

\item[ii)] If $X\not\in \mathbb{R}^{>0}$ and $Y \not\in \mathbb{R}^{>0}$, then $X+Y = \max\{ X,Y  \}$.

\end{trivlist}

\item
We could extend
Definition \ref{equivalence} to all of $R^{\geq 0}$ by setting $x\sim y$ iff $x^{-1}\sim y^{-1}$ for $x,y \in R^{>V}$, and the set of all Dedekind cuts in $R^{\geq 0}/\sim $ could be made into an ordered semiring similarly as in Definition \ref{operations}.  However,
 $\sim$ is not a congruence with respect to $\cdot$ when considered as an equivalence relation on $R^{\geq 0}$.  To see this, consider the product of $\epsilon$ and $\frac{1}{\epsilon}$ for $\epsilon \in \ma^{> 0}$.  We have $\widetilde{\epsilon \cdot \frac{1}{\epsilon}} = \widetilde{1}$, but $(n\epsilon ) \sim \epsilon $, hence $n \in \bigcup \widetilde{\epsilon}\cdot \widetilde{\frac{1}{\epsilon }}$ for all $n=1,2,\dots$.
This would force us to assign to the box $[0,\epsilon ]\times [0,\frac{1}{\epsilon}]$  measure $>\widetilde{n}$ for all $n$.  In general,
this problem cannot be fixed by identifying all of $\widetilde{R}\cap \mathbb{R}^{>0}$: Let $a,b \in \ma^{>0}$, $a<b$, be such that $a\sim b$ but $v(a)\not=v(b)$.  Then there is $c\in R^{>V}$ with $\widetilde{c} < \widetilde{a}\cdot \widetilde{\frac{1}{b}}=\widetilde{1}$.

The special case when $v(a)=v(b)$ iff $a\sim b$ for all $a,b \in \ma^{\geq 0}$ will be dealt with in the last section of this paper.
\end{itemize}
\end{remark}

\end{section}

\begin{section}{Measuring definable subsets of $[0,1]^n$}
In this section, we define the lower and upper measures of definable sets contained in $[0,1]^n$, and we show that they conincide.  This yields a measure on the definable subsets of $[0,1]^n$ which is then extended to a measure on the definable subsets of $V^n$ in Section 5.

We shall consider the structure $\mathbb{R}_0$, which has as underlying set $\mathbb{R}$, and whose basic relations are the sets $\pi X$, where $X\in \Def^n R$ for some $n$.  As a weakly o-minimal structure on the reals, $\mathbb{R}_0$ is necessarily o-minimal.  We shall use the facts below; the first one is  Proposition 5.1, p. 188, in \cite{real}, the second one is extracted from the proof of Lemma 2.15, p. 124, in \cite{thesispaper}, and the third is Corollary 2.5, p. 120 in \cite{thesispaper}.

\begin{fact}\label{closed}
Let $X\in \Def^n (\mathbb{R}_0 )$.  Then there is $Y \in \Def^n (R)$ so that $\pi Y = \closure{X}$.
\end{fact}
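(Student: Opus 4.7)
The plan is to reduce the statement to single cells of $\mathbb{R}_0$ via o-minimal cell decomposition, then to lift each cell to an $R$-definable set whose $\pi$-image is its closure, and to proceed by induction on the ambient dimension $n$.

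First I would apply cell decomposition in the o-minimal structure $\mathbb{R}_0$ to write $X = C_1 \dot\cup \cdots \dot\cup C_m$ with each $C_i$ an $\mathbb{R}_0$-cell. Since closure commutes with finite unions and $\pi(Y_1 \cup Y_2) = \pi Y_1 \cup \pi Y_2$, it suffices to produce an $R$-definable $Y_i$ with $\pi Y_i = \closure{C_i}$ for each $i$, and then take $Y = \bigcup Y_i$. This reduces the problem to the case where $X$ is a single cell.

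I would then induct on $n$. The base case $n=1$ is immediate: every $\mathbb{R}_0$-cell in $\mathbb{R}$ is a singleton $\{a\}$ or an open interval $(a,b)$, and its closure is a closed (possibly degenerate) interval $[a,b]$; since $\bk=\mathbb{R}$ and $R$ is $\omega$-saturated, we can find $a',b' \in V$ with $\pi a' = a$ and $\pi b' = b$, so that $[a,b] = \pi[a',b']$. For the inductive step, an $\mathbb{R}_0$-cell $C \subseteq \mathbb{R}^n$ sits over a lower-dimensional $\mathbb{R}_0$-cell $D \subseteq \mathbb{R}^{n-1}$ as either the graph of a continuous $\mathbb{R}_0$-definable function $f\colon D \to \mathbb{R}$, or as a band $(f,g)_D$. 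By the inductive hypothesis, $\closure{D} = \pi \tilde D$ for some $R$-definable $\tilde D$ which (by intersecting with a suitable box) may be taken closed and bounded. The crux is to lift the describing continuous functions $f$ (and $g$) to $R$-definable continuous functions $\tilde f\colon \tilde D \to V$ satisfying $\pi \Gamma \tilde f = \closure{\Gamma f}$, after which $\closure{C}$ can be written as $\pi$ of the $R$-definable set built from $\tilde D, \tilde f, \tilde g$ in the analogous way.

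The main obstacle is precisely this lifting of continuous functions from $\mathbb{R}_0$ to $R$, since $\pi$ does not commute with complement or with intersection of infinite families. To handle it, I would use that $\Gamma f$ is itself $\mathbb{R}_0$-definable and hence a Boolean combination of basic sets $\pi Z_i$; combined with $\omega$-saturation of $R$ and the good behavior of $\pi$ on closed bounded $R$-definable sets, one should be able to select for each $x \in \tilde D$ a value $\tilde f(x) \in V$ whose standard part matches the continuous extension of $f$ to $\closure{D}$, and then verify $R$-definability by a uniform type-theoretic or compactness-style selection. Throughout one works only with closures and closed sets, where the mismatch between $\pi$ and the Boolean operations becomes manageable.
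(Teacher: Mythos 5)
This fact is not proved in the paper: it is quoted verbatim as Proposition~5.1 of the cited reference \cite{real}, so there is no in-paper argument to compare against. Evaluated on its own terms, your sketch identifies a plausible strategy (cell decomposition in $\mathbb{R}_0$, induction on $n$, lift the cell data from $\mathbb{R}_0$ to $R$), and the reduction to cells together with the base case $n=1$ are fine. But the inductive step, which you yourself flag as ``the main obstacle,'' is left genuinely open, and the gap is real rather than cosmetic.

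Two concrete problems. First, the equation $\pi\Gamma\tilde f = \closure{\Gamma f}$ that you ask of the lift $\tilde f\colon \tilde D\to V$ is stronger than pointwise compatibility, and it is not clear it can even be arranged. A single $x\in D$ has an entire fiber $\piinv{x}\cap\tilde D$ of preimages, and an $R$-definable continuous $\tilde f$ will typically take values with different standard parts along that fiber unless one exerts uniform control; without such control $\pi\Gamma\tilde f$ can strictly contain $\closure{\Gamma f}$. Relatedly, $\closure{\Gamma f}$ need not be the graph of a function over $\closure{D}$ (the function can oscillate near $\partial D$), so the very formulation ``lift $f$ to $\tilde f$ with $\pi\Gamma\tilde f = \closure{\Gamma f}$'' needs to be repaired before it can be attempted. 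Second, the appeal to ``a uniform type-theoretic or compactness-style selection'' to produce $\tilde f$ and to ``verify $R$-definability'' is not an argument: the whole content of the fact is precisely that such a selection can be made definably over $R$, and $\omega$-saturation alone gives you points, not definable functions. A workable proof has to exploit the specific form of $\mathbb{R}_0$-definable sets (they are built from the generators $\pi Z$ by Boolean operations and projections) and show, by induction on that construction rather than on ambient dimension alone, that the family of closed sets of the form $\pi Y$ is stable under the relevant operations on closures --- for which tools like Fact~\ref{intersection} (controlling $\pi$ of intersections via $\epsilon$-thickenings) are the natural ingredients. As written, your proposal defers exactly the step where all the difficulty lies.
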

\begin{fact}\label{intersection}
Let $X,Y \in \Def^n (R)$ be non-empty.  Then there is $\epsilon \in \ma^{>0}$ so that $\pi (X\cap Y^{\epsilon })=\pi X \cap \pi Y$, where $Y^{\epsilon}=\{ x\in R^n :\; d(x,Y)\leq \epsilon \}$.
\end{fact}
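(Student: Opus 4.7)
The plan is to prove the two inclusions $\pi(X \cap Y^\epsilon) \subseteq \pi X \cap \pi Y$ and $\pi X \cap \pi Y \subseteq \pi(X \cap Y^\epsilon)$ separately; the first will hold for \emph{every} $\epsilon \in \ma^{>0}$, while the second will pin down the specific $\epsilon$ we need. For the first inclusion, if $x \in X \cap Y^\epsilon \cap V^n$ with $\epsilon \in \ma^{>0}$, then $d(x,Y) \leq \epsilon$ is infinitesimal, so the partial type $\{y \in Y\} \cup \{d(x,y) < 1/k : k \geq 1\}$ is finitely satisfiable in $R$; by $\omega$-saturation it is realized by some $y \in Y$ with $d(x,y)$ infinitesimal, whence $y \in V^n$ and $\pi y = \pi x \in \pi Y$.

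For the reverse inclusion, I would produce $\epsilon$ as a realization in $R$ of a partial type $p(\epsilon)$ in one variable over the defining parameters of $X$ and $Y$. The type $p(\epsilon)$ consists of $\epsilon > 0$, the cofinal family $\{\epsilon < 1/n : n \geq 1\}$, and a sequence of $R$-definable formulas $\theta_n(\epsilon)$ approximating the external condition ``every $x \in X \cap V^n$ with $\pi x \in \pi Y$ can be moved infinitesimally to some $x^* \in X$ with $d(x^*,Y) \leq \epsilon$''. To construct $\theta_n$, I would first apply Fact~\ref{closed} to fix $Y' \in \Def^n(R)$ with $\pi Y' = \closure{\pi Y}$ and then use $R$-definable Skolem functions (available by o-minimality) to pick, for each $n$, a nearest-point map $f_n$ from the $R$-definable slab $\{x \in X : |x| \leq n,\, d(x,Y) \leq 1/n\}$ into $X \cap Y'$; the formula $\theta_n(\epsilon)$ then asserts $d(f_n(x), Y) \leq \epsilon$ for every $x$ in the slab. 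Given a realization $\epsilon \in \ma^{>0}$ of $p(\epsilon)$, for each $z \in \pi X \cap \pi Y$ I lift to $x \in X \cap V^n$ with $\pi x = z$ (so $d(x,Y)$ is infinitesimal), note that $x$ lies in the domain of $f_n$ for all sufficiently large $n$, and use a second application of $\omega$-saturation to amalgamate the sequence $(f_n(x))$ into a single $x^* \in X$ with $|x - x^*|$ infinitesimal and $d(x^*, Y) \leq \epsilon$, giving $z = \pi x^* \in \pi(X \cap Y^\epsilon)$.

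The main obstacle is the correct choice of the $\theta_n$ so that $p(\epsilon)$ is both finitely satisfiable \emph{and} strong enough to yield the inclusion. Naive formulations such as ``$X \cap Y^{1/n} \subseteq Y^\epsilon$'' are too strong: already in $X = [0,1]$, $Y = \{0\}$ they force $\epsilon \geq 1/n$ for every $n$, leaving no room for infinitesimal $\epsilon$. The remedy is to require the containment only after an $R$-definable shrinking $f_n$ that pushes $x$ onto a definable surrogate $X \cap Y'$ of the closure of $Y$ inside $X$; verifying that this shrinking moves $x$ only infinitesimally whenever $x$ is itself infinitesimally close to $Y$ is the geometric content of the statement, and where o-minimality of $R$, cell decomposition of the definable family $\{X \cap Y^t\}_{t>0}$, and Fact~\ref{closed} are indispensable.
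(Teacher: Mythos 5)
Your forward inclusion and the overall strategy---realize $\epsilon$ as a solution of a countable type over the parameters of $X$ and $Y$, using $\omega$-saturation---are sound. (The paper does not give its own proof here; it cites the proof of Lemma~2.15 in \cite{thesispaper}.) But the specific formulas $\theta_n$ fail. First, the ``nearest-point map into $X\cap Y'$'' may be undefined: $X\cap Y'$ can be empty even when $\pi X\cap\pi Y\neq\emptyset$. Take $X=\{(x,\delta):x\in[0,1]\}$ and $Y=\{(x,0):x\in[0,1]\}$ with $\delta$ a positive infinitesimal; here $\pi Y$ is already closed, so $Y'=Y$ satisfies the conclusion of Fact~\ref{closed}, and then $\pi X=\pi Y=[0,1]\times\{0\}$ but $X\cap Y'=\emptyset$. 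Fact~\ref{closed} gives no control over $Y'$ beyond its standard part, so you cannot rule this out. Second, and more fundamentally, even when $f_n$ is defined (and note that $f_n(x)$ does not actually depend on $n$, only its domain does, so there is nothing to ``amalgamate''), your argument needs $|x-f_n(x)|=d(x,X\cap Y')$ to be infinitesimal whenever $\pi x\in\pi X\cap\pi Y$, i.e.\ it needs $\pi x\in\pi(X\cap Y')$. That is precisely the same shape of claim as the fact being proved, with $Y'$ in the role of $Y^\epsilon$. What you call ``the geometric content'' to be ``verified'' is the entire problem, and you give no argument for it.

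A cleaner type avoids all this and needs neither Fact~\ref{closed} nor Skolem functions. Let $p(\epsilon)$ contain $0<\epsilon<1/n$ for all $n$, together with, for every $a\in\mathbb{Q}^n$ and $r\in\mathbb{Q}^{>0}$ such that the open ball $B(a,r)$ meets $\pi X\cap\pi Y$, the formula $\exists x'\,\bigl(x'\in X\wedge|x'-a|<r\wedge d(x',Y)\leq\epsilon\bigr)$. This is finitely satisfiable: for each such ball pick $z$ in $B(a,r)\cap\pi X\cap\pi Y$ and lifts $x\in X$, $y\in Y$ of $z$; then $d(x,Y)\leq|x-y|$ is an explicit infinitesimal witness, and the maximum of finitely many infinitesimals lies below any given $1/n$. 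Given a realization $\epsilon$ and any $z\in\pi X\cap\pi Y$, shrinking rational balls around $z$ produce $x'_k\in X$ with $|x'_k-z|<1/k$ and $d(x'_k,Y)\leq\epsilon$; one more application of $\omega$-saturation then gives $x^*\in X$ with $\pi x^*=z$ and $d(x^*,Y)\leq\epsilon$, so $z\in\pi(X\cap Y^\epsilon)$.
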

\begin{fact}\label{vbox}
Let $X \in \Def^n (R)$, and suppose $\interior{\pi X}\not=\emptyset$.  Then there is a box $B\subseteq X$ with $\interior{\pi B} \not=\emptyset$.
\end{fact}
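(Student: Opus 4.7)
The plan is to decompose $X$ into $R$-cells, reduce to a single top-dimensional open cell $C$, locate a standard point $\vec\alpha\in\interior{\pi C}$ that avoids the standard part of $C$'s frontier, lift $\vec\alpha$ to a point $\vec x\in C\cap V^n$, and then verify that $\vec x$ lies at a non-infinitesimal distance from $R^n\setminus C$; a standard-sized closed box centered at $\vec x$ will then do the job.

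By cell decomposition over $R$, write $X = C_1\sqcup\cdots\sqcup C_k$. Since $\pi X = \bigcup_i \pi C_i$ has non-empty interior and $\mathbb{R}_0$ is o-minimal, some $\pi C_i$ has non-empty interior (in an o-minimal structure, a finite union has non-empty interior only if some member does). The dimension inequality $\dim_{\mathbb{R}_0}\pi Y \leq \dim_R Y$ for $R$-definable $Y$ --- a standard feature of the standard-part map in this tame setup, extractable from \cite{real} and \cite{thesispaper} --- then forces $\dim C_i = n$, so $C_i$ is an open cell; replace $X$ by $C_i$. The frontier $\partial X := \closure{X}\setminus X$ is $R$-definable of $R$-dimension $\leq n-1$, so by the same inequality $\pi\partial X$ has $\mathbb{R}_0$-dimension $\leq n-1$ and therefore empty interior in $\mathbb{R}^n$.

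Hence $(\interior \pi X)\setminus \pi\partial X$ is a non-empty open subset of $\mathbb{R}^n$; pick $\vec\alpha$ in it (with all coordinates positive, to match the paper's convention that boxes sit in $(R^{>0})^n$), and choose $\vec x\in X\cap V^n$ with $\pi\vec x = \vec\alpha$. The central claim is that $d(\vec x, R^n\setminus X)\notin\ma$: since $X$ is open this distance equals $d(\vec x, \partial X)$, and if it were infinitesimal one could select $\vec y\in \partial X$ with $|\vec x-\vec y|\in\ma$ (either directly from the definition of infimum, or via Fact \ref{intersection} applied to $X$ and $R^n\setminus X$); then $\vec y\in V^n$ and $\pi\vec y = \vec\alpha$, contradicting $\vec\alpha\notin\pi\partial X$. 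So $d(\vec x, R^n\setminus X) > r_0$ for some standard $r_0 > 0$, and the closed box $B = \prod_{j=1}^n [x_j - r_0/(2\sqrt{n}),\, x_j + r_0/(2\sqrt{n})]$ lies inside $X$ (every point of $B$ is within Euclidean distance $r_0/2$ of $\vec x$), while its standard part $\prod_j [\alpha_j - r_0/(2\sqrt{n}),\, \alpha_j + r_0/(2\sqrt{n})]$ is a non-degenerate box in $\mathbb{R}^n$, so $\interior\pi B\neq\emptyset$. The main obstacle is justifying the dimension inequality for the standard part of an $R$-definable set (and its corollary that $\pi\partial X$ has empty interior); both are intrinsic to this tame setup but require some care to extract from the cited references.
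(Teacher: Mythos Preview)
The paper does not prove this statement at all: it is quoted as Corollary~2.5 of \cite{thesispaper} and used as a black box. So there is no in-paper argument to compare against, and your write-up is effectively supplying what the paper outsources.

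Your argument is correct and follows the natural line for this kind of result. The reduction to a single open cell uses only that in an o-minimal structure a definable set has empty interior iff its dimension is $<n$, so a finite union of sets with empty interior still has empty interior; hence some $\pi C_i$ has interior, and the dimension inequality $\dim_{\mathbb{R}_0}\pi Y\le \dim_R Y$ forces $C_i$ to be open. That same inequality gives $\dim_{\mathbb{R}_0}\pi(\partial C_i)<n$, which is exactly what you need to pick $\vec\alpha\in\interior{\pi C_i}\setminus\pi(\partial C_i)$. The distance step is clean: if $d(\vec x,\partial C_i)$ were infinitesimal, definable choice (or just taking any witness within $2d(\vec x,\partial C_i)$) produces $\vec y\in\partial C_i$ with $\pi\vec y=\vec\alpha$, contradicting the choice of $\vec\alpha$. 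You are right that the dimension inequality is the one nontrivial external input; it is precisely what is proved in \cite{thesispaper}, and indeed the corollary cited by the paper is a consequence of it there.

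One cosmetic point: the paper's convention requires box endpoints to lie in $R^{>0}$, so your parenthetical ``with all coordinates positive'' is doing real work. If $\interior{\pi X}$ happened to miss the open positive orthant entirely this would fail; in the paper the fact is only ever invoked for $X\subseteq[0,1]^n$, where the issue does not arise, so this is an artifact of the stated generality rather than a gap in your reasoning.
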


\begin{definition}\label{defmeasure}
\begin{enumerate}
\item Let $X \subseteq [0,1]^{n}$ be an $(i_1 , \dots ,i_n )$-cell.  We define the {\em lower measure\/} $\underline{\mu}$ and {\em upper measure\/} $\overline{\mu}$ of $X$ by induction on $n$.
\begin{enumerate}
 \item If $X$ is a $(0)$-cell, then $\underline{\mu}X=\overline{\mu}X = 0$.  If $X=(a,b)$ where $a<b$, then $$\underline{\mu}X=\overline{\mu}X=\widetilde{b-a} \in \widetilde{R} .$$

\item Suppose $\underline{\mu}X$ and $\overline{\mu}X$ have been defined for $(i_1 , \dots ,i_n )$-cells.  If $X$ is an $(i_1 , \dots , i_{n+1})$-cell so that $i_j = 0$ for some $j\in \{ 1,\dots ,n+1  \}$, then $\underline{\mu}X=\overline{\mu}X=0$.  If $X=(f,g)$ is an $(i_1 ,\dots , i_{n+1} )$-cell so that $i_j =1$ for all $j\in \{ 1,\dots , n+1   \}$, then set $h=g-f$ and define
$\underline{\mu}X$ to be the 
supremum of 
$$\sum_{i=1}^{k} \widetilde{z}_{i-1} \cdot \underline{\mu} (h^{-1} [z_{i-1}, z_{i}]) $$ as $k\to \infty$ and $z_0 , \dots ,z_k $ range over all elements of $[0,1]_R$ with $$0=z_0 < \dots <z_k =1.$$ 
The upper measure $\overline{\mu} X$ is defined to be
the infimum of 
$$ \sum_{i=1}^{k}\widetilde{z}_i \cdot \overline{\mu} (h^{-1} [z_{i-1}, z_i ]) $$ as $k\to \infty$ and $z_0 , \dots ,z_k$ range over all elements of $[0,1]_R$ with $$0=z_0 < \dots <z_k =1.$$ 
\end{enumerate}

\item
Let $X\subseteq [0,1]^n$ be definable, and let $\mathcal{D}$ be a decomposition of $R^n$ into cells that partitions $X$.  Suppose $X=D_1 \cup D_2 \cup \dots \cup D_k$, where all $D_i \in \mathcal{D}$.  Then $\underline{\mu}_{\mathcal{D}} X= \sum_{i=1}^{k}\underline{\mu} D_i$ and $\overline{\mu}_{\mathcal{D}} X= \sum_{i=1}^{k}\overline{\mu} D_i$.

\end{enumerate}
\end{definition}
We shall also refer to the sum
$$\sum_{i=1}^{k} \widetilde{z}_{i-1} \cdot \underline{\mu} (h^{-1} [z_{i-1}, z_{i}]) $$ in the definition above as the {\em lower sum\/} of $f$ corresponding to the partition $\{z_0 , \dots ,z_k \}$, and to the sum $$ \sum_{i=1}^{k}\widetilde{z}_i \cdot \overline{\mu} (h^{-1} [z_{i-1}, z_i ]) $$ as the {\em upper sum\/} of $f$ corresponding to the partition $\{ z_0 , \dots , z_k \}$.

\begin{example}
\em
In general,
there is no hope of proving that the lower and upper measures of definable subsets of $[0,1]^n$ coincide if we replace the definition of $\sim$ on $\ma^{\geq 0}$ by $x\sim y$ iff $v(x)=v(y)$.  To see this, consider the function $f\colon [\epsilon^2 ,\epsilon ] \to [0,1]$ given by $f(x)=\frac{\epsilon^2 }{x}$, where $\epsilon \in \ma^{>0}$.  Let $\delta \in \ma^{>0}$ be such that $$v(\epsilon^p )< v(\delta )< v(\epsilon^2 ),$$ where $p\in \mathbb{Q}^{<2}$.  It is easy to see that then $\underline{\mu}(0,f)=\widetilde{\epsilon}^2$, but there is no finite partition of $[0,1]$ so that the corresponding upper sum $U$ of $f$ would be such that $U \leq \widetilde{\delta}$.
\end{example}

\smallskip\noindent
Until Theorem \ref{main} has been proven, we shall write $\underline{\mu}C$ and $\overline{\mu}C$ for the lower and upper measures of a cell $C \subseteq [0,1]^n$ computed as in part 1 of Definition \ref{defmeasure} (this is in contrast to $\underline{\mu}_{\mathcal{D}}C$ and $\overline{\mu}_{\mathcal{D}}C$ which are computed as in part 2.).

%If $X$ is a definable subset of $[0,1]^n$, and we write $\overline{\mu}_{\mathcal{D}}X$ or $\underline{\mu}_{\mathcal{D}}X$, then these measures were computed according to part 2 of Definition \ref{defmeasure}.  After proving Theorem \ref{main} we shall simply write $\mu X$.

\begin{lemma}\label{smalllemma}
Let $X \subseteq [0,1]^n$ be definable with $\interior{ \pi X }=\emptyset$, and let $\mathcal{D}$ be a decomposition of $R^n$ into cells that partitions $X$.  Then there is no $x\in \bigcup \underline{\mu }_{\mathcal{D}} X$ with $x>\ma$.
\end{lemma}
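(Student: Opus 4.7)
I propose to prove the following strengthening by induction on $n$: for every definable $Y\subseteq[0,1]^n$ with $\interior{\pi Y}=\emptyset$ and every cell decomposition $\mathcal{D}$ of $R^n$ partitioning $Y$, the cut $\underline{\mu}_{\mathcal{D}} Y$ is contained in $\ma^{\geq 0}/\!\sim\,\cup\,V^{<0}/\!\sim$; call a cut with this property \emph{infinitesimal}. The set of infinitesimal cuts is downward closed in $\widetilde{V}$, closed under the semiring sum (by Remark \ref{rem}(i),(ii) together with the ideal property of $\ma$), closed under multiplication by arbitrary elements of $\widetilde{V}$ (again by the ideal property), and closed under supremum (since $\ma^{\geq 0}/\!\sim\,\cup\,V^{<0}/\!\sim$ is itself an infinitesimal cut and an upper bound for any family of infinitesimal cuts). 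The base case $n=1$ is immediate: the only nonzero contributions come from open intervals $(a,b)\subseteq Y$ with $[\pi a,\pi b]\subseteq\pi Y$ of empty interior, forcing $b-a\in\ma$ and hence $\underline{\mu}(a,b)=\widetilde{b-a}$ infinitesimal; Lemma \ref{addcong} and closedness of $\ma$ under addition then handle the sum.

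For the inductive step, by the closure properties above it suffices to fix a single $(1,\ldots,1)$-cell $D=(f,g)\subseteq Y$ with $D'=p^{n+1}_n(D)$ and $h=g-f$, and show that $\underline{\mu} D$ is infinitesimal. Since $\underline{\mu} D$ is a supremum of lower sums
\[
L \;=\; \sum_{i=1}^k \widetilde{z}_{i-1}\cdot \underline{\mu}\bigl(h^{-1}[z_{i-1},z_i]\bigr),
\]
it is enough to show each $L$ is infinitesimal. For indices $i$ with $z_{i-1}\in\ma$, the factor $\widetilde{z}_{i-1}$ is already infinitesimal, so the corresponding term is infinitesimal. For indices with $z_{i-1}>\ma$, the crucial step is the subclaim that $\interior{\pi(h^{-1}[z_{i-1},z_i])}=\emptyset$ in $\mathbb{R}^n$; granting it, the inductive hypothesis applied to $h^{-1}[z_{i-1},z_i]\subseteq[0,1]^n$ makes $\underline{\mu}(h^{-1}[z_{i-1},z_i])$ infinitesimal, and the closure under multiplication finishes that term.

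To prove the subclaim I argue by contradiction, assuming that some open $U\subseteq\mathbb{R}^n$ is contained in $\pi(h^{-1}[z_{i-1},z_i])$. The inclusion $\pi D\subseteq\pi Y$ gives $\interior{\pi D}=\emptyset$. For each $y\in U$ pick $x\in h^{-1}[z_{i-1},z_i]$ with $\pi x=y$; then $\pi g(x)-\pi f(x)=\pi h(x)\geq\pi z_{i-1}>0$. The $\mathbb{R}_0$-definable set
\[
A \;=\; \{(y,t)\in U\times\mathbb{R}\colon \exists x\in h^{-1}[z_{i-1},z_i],\ \pi x=y,\ \pi f(x)<t<\pi g(x)\}
\]
is contained in $\pi D$: given $(y,t)\in A$ with witness $x$, choose $t'\in V$ with $\pi t'=t$ (using surjectivity of $\pi$); since $\pi(t'-f(x))>0$ and $\pi(g(x)-t')>0$ force $f(x)<t'<g(x)$, the pair $(x,t')\in D$ satisfies $\pi(x,t')=(y,t)$. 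Each fiber $A_y$ for $y\in U$ contains an interval of length $\geq \pi z_{i-1}>0$, so $A$ has positive Lebesgue measure in $\mathbb{R}^{n+1}$ by Fubini; since $\mathbb{R}_0$ is o-minimal, this forces $A$ to have nonempty interior, contradicting $\interior{\pi D}=\emptyset$.

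The main obstacle is this subclaim, where the geometry of $\pi D\subseteq\mathbb{R}^{n+1}$ has to be translated into a fiberwise non-infinitesimality statement about $h$; everything else is a bookkeeping exercise with Dedekind cuts using Definition \ref{defmeasure}, Lemma \ref{addcong}, and Remark \ref{rem}.
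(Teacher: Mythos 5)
Your proof is correct and follows essentially the same inductive route as the paper: reduce to a single open cell $(f,g)$, analyze one term of a lower sum at a time, and feed the slices $h^{-1}[z_{i-1},z_i]$ into the inductive hypothesis. The difference is partly organizational (you run the argument directly, the paper runs it by contradiction, extracting a non-infinitesimal term $\widetilde{y_i}\cdot\underline{\mu}h^{-1}[y_i,y_{i+1}]$ and using the contrapositive IH to conclude $\interior{\pi h^{-1}[y_i,y_{i+1}]}\neq\emptyset$), but partly substantive: the paper then simply asserts that $\interior{\pi(h^{-1}[y_i,y_{i+1}]\times[0,y_i])}\neq\emptyset$ is the desired contradiction, whereas the box $h^{-1}[y_i,y_{i+1}]\times[0,y_i]$ lives inside $(0,h)=\tau_D D$ rather than inside $D$ itself, so turning it into a contradiction with $\interior{\pi X}=\emptyset$ genuinely requires the fiberwise reasoning you make explicit. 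Your Fubini argument (each fiber of $\pi D$ over $U$ contains an interval of length at least $\pi z_{i-1}>0$, so $\pi D$ has positive Lebesgue measure, and by o-minimality of $\mathbb{R}_0$ therefore nonempty interior) fills that gap cleanly. One minor imprecision: the auxiliary set $A$ as you wrote it quantifies over $x\in R^n$ and hence is not literally $\mathbb{R}_0$-definable; but this is harmless, since you only need $A\subseteq\pi D$ to transfer the positive-measure conclusion to the genuinely $\mathbb{R}_0$-definable set $\pi D$, and it is $\pi D$ to which the o-minimality argument should be applied.
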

\begin{proof}
The proof is by induction on $n$.  The case $n=1$ is clear, so suppose the lemma holds for $1,\dots ,n$, and let $X\subseteq [0,1]^{n+1}$.  Suppose $X=D_1 \cup \dots \cup D_m$, where $D_i \in \mathcal{D}$.  Assume 
towards a contradiction that $x\in \bigcup \underline{\mu}X$ is so that $x>\ma$.  Then there is $i\in \{ 1,\dots ,m \}$ such that $\bigcup \underline{\mu}D_i$ contains some $x>\ma$.  Then $\interior{D_i } \not=\emptyset$, so suppose $D_i =(f,g)$ and set $h=g-f$.
There are $$0=y_0 < y_1 < \dots < y_k = 1$$ so that $ \bigcup \sum_{i=0}^{k-1} \widetilde{y_i } \cdot \underline{\mu } h^{-1}[y_i ,y_{i+1}]$ contains an element 
$>\ma$,
hence $$\widetilde{y_i }\cdot \underline{\mu }h^{-1}[y_i ,y_{i+1}] = \widetilde{a}$$ for some $a\in V^{> \ma}$ and $i\in \{0,\dots ,k-1   \}$.  It follows that $y_i >\ma$, and there is $x\in \bigcup \underline{\mu} h^{-1}[y_i ,1]$ with $x>\ma$.  But then, by the inductive assumption, $\interior{\pi h^{-1}[y_i ,y_{i+1}]} \not=\emptyset$, hence $$\interior{ \pi \big( h^{-1} [y_i ,y_{i+1}] \times [0,y_i ] \big) } \not=\emptyset ,$$ a contradiction.
\end{proof}
\begin{lemma}\label{inner}
If $X=(f,g) \subseteq [0,1]^n$ is an open cell with $\interior{\pi X}=\emptyset$, then for each $a\in V^{\geq 0}$ with $\widetilde{a}<\underline{\mu}X$ there is $y\in [0,1]$ so that $$\widetilde{a}<\widetilde{y}\cdot \underline{\mu}h^{-1}[y,1],$$ where $h=g-f$.
\end{lemma}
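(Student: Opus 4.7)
The plan is to exploit the hypothesis $\interior{\pi X}=\emptyset$ via Lemma \ref{smalllemma}: every lower sum for $X$ will then consist entirely of non-real cuts (i.e., cuts not in $\mathbb{R}^{>0}$), so by Remark \ref{rem} semiring addition collapses to maximum and a single summand already witnesses $\widetilde{a}<\underline{\mu}X$.

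\emph{Step 1.} Since $\underline{\mu}X$ is defined as the supremum --- which for Dedekind cuts is the union --- of the lower sums $S_P := \sum_{i=1}^{k}\widetilde{z}_{i-1}\cdot\underline{\mu}(h^{-1}[z_{i-1},z_{i}])$ over partitions $P=\{0=z_0<\cdots<z_k=1\}$ of $[0,1]_R$, the inequality $\widetilde{a}<\underline{\mu}X$ produces one such $P$ with $\widetilde{a}<S_P$. I will write $T_i := \widetilde{z}_{i-1}\cdot\underline{\mu}(h^{-1}[z_{i-1},z_{i}])$; the $i=1$ term is $\widetilde{0}$, so $S_P = T_2+\cdots+T_k$.

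\emph{Step 2.} Next I argue each $T_i$ is non-real. By Lemma \ref{smalllemma}, applied with a cell decomposition of $R^n$ containing $X$ as one of its cells, no element $>\ma$ lies in $\bigcup\underline{\mu}X$, so $\underline{\mu}X\notin\mathbb{R}^{>0}$; since $S_P\subseteq\underline{\mu}X$, the same holds for $S_P$. Should any $T_j$ lie in $\mathbb{R}^{>0}$, iterating Remark \ref{rem}(i) through the partial sums $T_2,\, T_2+T_3,\,\ldots$ (using that sums of reals remain real) would force $S_P\in\mathbb{R}^{>0}$, contradiction. Hence every $T_i$ is non-real.

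\emph{Step 3.} Iterating Remark \ref{rem}(ii) now gives $S_P = \max_{i\geq 2}T_i$, so $\widetilde{a}<T_j$ for some $j$. I set $y := z_{j-1}\in[0,1]$. Since $h^{-1}[z_{j-1},z_j]\subseteq h^{-1}[y,1]$, monotonicity of $\underline{\mu}$ on definable sets (obtained by computing both sides through a common cell decomposition refining each) gives $\underline{\mu}(h^{-1}[z_{j-1},z_j])\leq\underline{\mu}(h^{-1}[y,1])$; multiplication by $\widetilde{y}$ preserves order in $\widetilde{V}$, so $\widetilde{a}<\widetilde{y}\cdot\underline{\mu}(h^{-1}[y,1])$, as required. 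The delicate point is Step 2: one must track how a single real summand contaminates the entire partial-sum chain into $\mathbb{R}^{>0}$, which is precisely what rules out $T_j\in\mathbb{R}^{>0}$; the remaining steps are direct readings of the definitions.
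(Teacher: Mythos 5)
Your proof is correct and is essentially a spelled-out version of what the paper dispatches in one line (``Immediate from Lemma \ref{smalllemma} and \dots Remark \ref{rem}''). The key steps are exactly the ones the authors implicitly invoke: use Lemma \ref{smalllemma} to see that $\underline{\mu}X$, and hence any lower sum $S_P \subseteq \underline{\mu}X$, lies outside $\mathbb{R}^{>0}$; rule out a real summand $T_j$ by observing that a single real term propagates through the partial sums via Remark \ref{rem}(i); collapse $S_P$ to $\max_i T_i$ via Remark \ref{rem}(ii); and finish with $h^{-1}[z_{j-1},z_j]\subseteq h^{-1}[z_{j-1},1]$ and monotonicity of the lower measure in dimension $n-1$, plus order-preservation of multiplication by $\widetilde{z}_{j-1}$ in $\widetilde{V}$. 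One small remark: the paper's citation of ``iv)'' is a typo, since the enumerated part of Remark \ref{rem} only lists i) and ii); your reading of it as ii) is the right one.
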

\begin{proof}
Immediate from Lemma \ref{smalllemma} and iv) in the second part of Remark \ref{rem}.
\end{proof}

\begin{theorem}\label{main}
Let $X \subseteq [0,1]^{n}$ be definable.  Then $$\underline{\mu}_{\mathcal{E}}X=\underline{\mu}_{\mathcal{F}}X= \overline{\mu}_{\mathcal{F}}X = \overline{\mu}_{\mathcal{E}}X,$$
for all decompositions $\mathcal{E}$ and $\mathcal{F}$ of $R^n$ into cells that partition $X$.  

\smallskip\noindent
We shall refer to the common value of the upper and lower measures of $X$ as {\em the measure of $X$\/} and denote it by $\mu X$.
\end{theorem}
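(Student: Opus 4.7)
The plan is to induct on $n$. The base case $n=1$ follows from o-minimality (cells of $R$ in $[0,1]$ are points or open intervals) together with Lemma~\ref{addcong}, which yields refinement invariance via $\widetilde{b-a} = \widetilde{c-a} + \widetilde{b-c}$ for $a < c < b$. For the inductive step, assume the theorem in all dimensions less than $n$; then for any definable $Y \subseteq [0,1]^m$ with $m<n$ there is a single well-defined value $\mu Y := \underline{\mu}_{\mathcal{D}} Y = \overline{\mu}_{\mathcal{D}} Y$ independent of $\mathcal{D}$, which makes the terms $\mu(h^{-1}[z_{i-1}, z_i])$ appearing in the lower and upper sums for a cell $C \subseteq [0,1]^n$ unambiguous. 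The theorem in dimension $n$ then reduces, by taking a common refinement of $\mathcal{E}$ and $\mathcal{F}$ and summing cell-by-cell, to the single-cell claim: for every cell $C \subseteq [0,1]^n$ and every cell decomposition $\mathcal{D}$ of $C$,
\[
\underline{\mu}_{\mathcal{D}} C \;=\; \overline{\mu}_{\mathcal{D}} C \;=\; \underline{\mu} C \;=\; \overline{\mu} C.
\]

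For the single-cell claim, write $C = (f, g)$ as an open cell over an open base $D \subseteq [0,1]^{n-1}$ with height $h = g - f$ (if $C$ has a degenerate coordinate, all four values are $0$ by definition). The inequalities $\underline{\mu}_{\mathcal{D}} C \leq \overline{\mu}_{\mathcal{D}} C$ and $\underline{\mu} C \leq \overline{\mu} C$ follow by term-by-term comparison on common refinements of the relevant partitions of $[0,1]$. For the reverse inequalities I would split on $\interior{\pi C}$. If $\interior{\pi C} \neq \emptyset$, Fact~\ref{vbox} yields a box $B \subseteq C$ with $\underline{\mu} B \in \mathbb{R}^{>0}$, which forces $\underline{\mu} C$ into the real part of $\widetilde{V}$; a Darboux-type argument performed in the residue field $\bk = \mathbb{R}$ then identifies all four values with the Lebesgue integral of $\pi h$ over $\pi D$.

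The main obstacle is the infinitesimal case $\interior{\pi C} = \emptyset$, where Lemma~\ref{smalllemma} confines all four values to the infinitesimal part of $\widetilde{V}$. A naive uniform-mesh estimate of the form $U(Z) \leq L(Z) + \widetilde{\epsilon}\cdot \mu D$, with $\epsilon$ the mesh of $Z$ and $L(Z), U(Z)$ the lower and upper sums, does not close the gap here, since $\widetilde{\epsilon}\cdot \mu D$ fails to tend to $0$ in $\widetilde{V}$ as $\epsilon$ ranges over positive rationals (positive reals strictly dominate all infinitesimals). My plan is to argue by contradiction: assume $\underline{\mu} C < \widetilde{a} < \overline{\mu} C$ for some $a \in \ma^{>0}$, then use Lemma~\ref{inner} together with an analogous outer approximation for $\overline{\mu}$, iterated to refine the partition around regions where $h$ varies, to produce a single partition $Z$ whose lower and upper sums land in a configuration incompatible with the simultaneous constraints $L(Z) < \widetilde{a} < U(Z)$.
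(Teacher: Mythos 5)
Your overall architecture matches the paper's: induction on $n$, reduction to the single-cell claim by common refinement, and a case split on whether $\interior{\pi C}$ is empty. The base case, the reduction, and the high-level plan for the case $\interior{\pi C}\ne\emptyset$ are all fine (though the latter needs Facts~\ref{closed} and~\ref{intersection} to carefully match $\mu h^{-1}[b_i,b_{i+1}]$ with $\mu h_0^{-1}[a_i,a_{i+1}]$, including a sub-split on whether $\partial h_0/\partial x_j\equiv 0$; you should not expect the Darboux argument to be entirely routine).

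The genuine gap is in the infinitesimal case $\interior{\pi C}=\emptyset$, which is the heart of the theorem. You correctly observe that a uniform mesh estimate $U(Z)\leq L(Z)+\widetilde{\epsilon}\cdot\mu D$ fails, because on the infinitesimal part of $\widetilde{V}$ addition is $\max$ and nothing additive can tend to $0$. But your proposed fix --- iterating Lemma~\ref{inner} to refine the partition where $h$ varies and then hoping a single partition $Z$ produces a ``configuration incompatible with $L(Z)<\widetilde{a}<U(Z)$'' --- is not a workable contradiction: for any fixed finite partition $Z$ one will typically have $L(Z)<U(Z)$, and there is no reason that $\widetilde{a}$ lying between them contradicts anything. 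The missing idea is that the squeeze must be \emph{multiplicative}, exploiting the rational-power structure built into $\sim$. The paper's proof sets $A=\sup_y\widetilde{y}\cdot\mu h^{-1}[y,1]$, introduces the dichotomy of ``property $\ast$'', and in the hard sub-case defines $c=\inf\mathcal{S}$ for a set $\mathcal{S}$ of rational exponents; it then shows $\widetilde{x}^{q_2}<\underline{\mu}(0,h)\leq\overline{\mu}(0,h)<\widetilde{x}^{q_1}$ for all rationals $q_1<c<q_2$, with the upper bound obtained from a \emph{geometric} partition $\{0,x^{lq_3},x^{(l-1)q_3},\dots,x^{q_3},1\}$ of $[0,1]$, not a uniform one. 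Since any two elements trapped between $\widetilde{x}^{q_2}$ and $\widetilde{x}^{q_1}$ for all such $q_1,q_2$ are $\sim$-equivalent by Definition~\ref{equivalence}, the lower and upper measures coincide. Without the geometric partitions and the exponent-interval squeeze, the infinitesimal case does not close, so as written the proposal does not constitute a proof.
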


\begin{proof}  We may as well assume $\interior{X}\not=\emptyset$.  The proof is by induction on $n$.  The case when $n=1$ holds by Lemma \ref{addcong}, so assume inductively that the theorem holds for $1, \dots ,n$, and let $X \subseteq [0,1]^{n+1}$.
%We shall consider the cases when $\interior{\pi X} \not=\emptyset$ and when $\interior{\pi X}=\emptyset$ separately, and prove in each instance a slightly stronger version of the theorem.

\begin{trivlist}

\item[{\bf Case 1.\/}] Suppose $\interior{\pi X} = \emptyset$.
%We first assume that {\bf{$X=(s,t)$ is a cell}\/}.  We set $f=t-s$.  Our aim is to show that $$\underline{\mu} X = \overline{\mu}X=\underline{\mu}_{\mathcal{D}}X=\overline{\mu}_{\mathcal{D}}X.$$
\begin{trivlist}
\item[{\em Claim 1.\/}]
Let $X=(f,g)$ be an open cell.  Then $\underline{\mu}X=\overline{\mu}X$.
\item[{\em Proof of Claim 1.\/}]
%The claim clearly holds for $n=1$.  
%So we may assume that the second part of the claim is part of the inductive assumption.
We set $h=g-f$, and we
define $$A := \sup_{y \in [0,1]} \{ \widetilde{y} \cdot \mu (h^{-1}[y,1])  \} \in \widetilde{V} ,$$ where the expression $\mu h^{-1}[y,1]$ makes sense by the inductive assumption.
%By $\hat{A}$ we denote the Dedekind cut corresponding to $A$ in $\widetilde{V}$, i.e. the set $$\{ x \in V : \; \exists \widetilde{y} \in A \; \widetilde{x}<\widetilde{y} \} .$$
We shall say that {\em property $\ast$ holds for $h$\/} if there is $x\in \ma^{>0}$ such that $$\widetilde{y} \cdot \mu (h^{-1} [y,1]) <\widetilde{x}$$ for all $y\in [0,1]$, and 
there is $y \in [0,1]$ and $q \in \mathbb{Q}^{>1}$ so that $$\widetilde{x}^q <\widetilde{y} \cdot \mu (h^{-1} [y,1]).$$
We distinguish two cases.

\begin{enumerate}
\item 
First, assume that property $\ast$ holds for $h$.

Let $x\in \ma^{>0}$ witness that $\ast$ holds for $h$.  We set $$\mathcal{S}:=\{ q \in \mathbb{Q}^{>1}\colon \, \exists y\in [0,1] \;\, \widetilde{x}^{q}< \widetilde{y} \cdot \mu h^{-1}[y,1] \}.$$  Then $\mathcal{S}$ is a nonempty subset of $\mathbb{R}$ that is bounded below, hence the infimum of $\mathcal{S}$ exists in $\mathbb{R}$.  We set $c:= \inf \mathcal{S}$.  %Establishing the claim below will prove the theorem in Case 1.
\begin{subclaim}
Let $q_1 , q_2 \in \mathbb{Q}^{>0}$ be so that $q_1 <c < q_2$.  Then 
$$\widetilde{x}^{q_2 }<\underline{\mu}(0,h)\leq \overline{\mu}(0,h)<\widetilde{x}^{q_1}.$$
\end{subclaim}
\begin{proof of subclaim}
We first show that $\widetilde{x}^{q_2} < \underline{\mu}(0, h)$.  By the definition of $c$, we can find $q \in \mathcal{S}$ so that $c<q <q_2 $, and we let $y \in [0,1]$ satisfy $$\widetilde{x}^{q} < \widetilde{y}\cdot \mu h^{-1} [y,1] .$$  Then $$\widetilde{x}^{q_2 }< \widetilde{y} \cdot \mu h^{-1} [y,1] \leq \underline{\mu}(0,h).$$

To prove $\overline{\mu}(0,h)<x^{q_1}$,
let $q_3 \in \mathbb{Q}^{>0}$ and a positive integer $l$ be such that $q_1 + 2q_3 < c$ and $q_1 + q_3 < l q_3$.
Then the upper sum of $h$ corresponding to the partition $\{ 0,x^{lq_3}, x^{(l-1)q_3}\dots , x^{q_3}, 1 \}$ of $[0,1]$ is 
$$ \mu h^{-1}[x^{q_3},1] +  \sum_{i=1}^{l-1}\widetilde{ x}^{i q_3} \mu h^{-1}[x^{(i+1)q_3},x^{iq_3}] + \widetilde{x}^{lq_3} \mu h^{-1}[0,x^{lq_3}].$$  
Now
$\mu h^{-1}[x^{q_3 } ,1]  <\widetilde{x}^{q_1 + q_3}$, because else
$\mu h^{-1}[x^{q_3 } ,1]  \geq \widetilde{x}^{q_1 + q_3}$ would imply $\widetilde{x}^{q_3} \cdot \mu h^{-1}[x^{q_3 } ,1]  \geq \widetilde{x}^{q_1 + 2q_3}$, a contradiction with $\widetilde{x}^{c}<\widetilde{x}^{q_1 + 2q_3}$.

For $i=1,\dots ,l-1$, we have
$$\widetilde{x}^{iq_3} \mu h^{-1} [x^{(i+1)q_3},x^{iq_3}]<
\widetilde{x}^{q_1 + q_3},$$ because else
$$\widetilde{x}^{iq_3} \mu h^{-1} [x^{(i+1)q_3},x^{iq_3}]\geq
\widetilde{x}^{q_1 + q_3}$$ would imply
$$\widetilde{x}^{(i+1)q_3} \mu h^{-1} [x^{(i+1)q_3},x^{iq_3}]\geq
\widetilde{x}^{q_1 + 2q_3},$$ again a contradiction with $\widetilde{x}^{c}<\widetilde{x}^{q_1 + 2q_3}$.

Also, $$\widetilde{x}^{lq_3} \mu h^{-1} [0, x^{l q_3}]  \leq \widetilde{x}^{lq_3} < \widetilde{x}^{q_1 + q_3}.$$
So the upper sum of $h$ corresponding to $\{ 0,x^{lq_3}, x^{(l-1)q_3}\dots , x^{q_3}, 1 \}$ is smaller than $(l+1)\cdot \widetilde{x}^{q_1 + q_3}=\widetilde{x}^{q_1 + q_3}<\widetilde{x}^{q_1}$.
\end{proof of subclaim}

It now follows that $\underline{\mu}(0,h)=\overline{\mu}(0,h)$: If not, then we can find $y,z \in V^{>0}$ so that $x^{q_2 }<y<z<x^{q_1}$ for all $q_1 , q_2 \in \mathbb{Q}^{>0}$ with $q_1 <c<q_2$, and $y\not\sim z$.  Hence $y<z^q$ for some $q\in \mathbb{Q}^{>1}$. Then 
$$x^{q_2} < y<z^{q}<x^{qq_1}$$ for all $q_1 , q_2 \in \mathbb{Q}^{>0}$ with $q_1 < c < q_2$.  But picking $q_1$ so that $q q_1 > c$ yields a contradiction with $\widetilde{x}^{q_2}<y$ for all $q_2 \in \mathbb{Q}^{>c}$.

%Note that here we have $\mu X= \widetilde{x^c}$.  Thus, $\mu X$ is an $R$-cut.  $\ma^{>0}$ is a disjoint union of sets $R(x)$   $\widetilde{R}$ restricted to each $R(x)$ is complete?

%Inductively, we can find a box $B_0 \subseteq f^{-1}[y,1]$ with $\widetilde{a} < \widetilde{y} \cdot \mu B_0$.  So $B_0 \times [0,y] \subseteq X$ is a box of measure $ > \widetilde{a}$.

\item Suppose $\ast$ does not hold for $h$.

In this case, if $x\in \ma^{>0}$, then either $A <\widetilde{x}^p$ for all $p\in \mathbb{Q}^{>0}$, or $\widetilde{x}^p<A$, for all $p\in \mathbb{Q}^{>0}$.
We shall show that $\overline{\mu}(0,h) \leq A \leq \underline{\mu} (0,h)$.

To prove that $A \leq \underline{\mu} (0,h)$, let $a\in V^{>0}$ be such that $\widetilde{a} < A$.  Then
we can find $y\in [0,1]$ so that $\widetilde{a}<\widetilde{y}\cdot \mu h^{-1} [y,1] \leq \underline{\mu}(0,h)$.

\smallskip\noindent
To see that $\overline{\mu} (0,h) \leq A$, let $y \in V^{>0}$ be such that $A<\widetilde{y}$.

First, suppose $\ma < y <1$.  Then $\mu h^{-1} [\frac{y}{2},1]<\widetilde{ \big( \frac{y}{2}  \big) }$, because else $$\widetilde{\big(  \frac{y}{2} \big) }\cdot \overline{\mu}h^{-1}[\frac{y}{2},1]\geq \widetilde{\big(  \frac{y}{2}     \big)}^2  > A,$$ would yield a contradiction with the definition of $A$.  So $$\overline{\mu}(0,h) \leq \mu h^{-1}[\frac{y}{2},1] + \widetilde{\frac{y}{2}} \cdot \overline{\mu }h^{-1}[0,\frac{y}{2}]<\widetilde{\big( \frac{y}{2} \big) }+\widetilde{\big( \frac{y}{2}\big) }=\widetilde{y}.$$

So assume that $y\in  \ma^{>0}$.  Then $A<\widetilde{y}^2$, because $\ast$ fails for $h$.  Hence $\mu h^{-1} [y,1] <\widetilde{y}$, else $\widetilde{y}\cdot \mu h^{-1} [y,1]\geq \widetilde{y}^2 >A$, a contradiction.  So $$\overline{\mu}(0,h) \leq \mu h^{-1} [y,1] + \widetilde{y} \cdot \mu h^{-1} [0,y] < \widetilde{y}+\widetilde{y}=\widetilde{y}.$$

It follows that $\underline{\mu}(0,h)=\overline{\mu}(0,h) = \mu (0,h)$.

This finishes the proof of Claim 1.

\end{enumerate}

\item[{\em Claim 2.\/}] Let $X=(f,g)$ be an open cell, and let $\mathcal{D}$ be a decomposition of $R^{n+1}$ into cells that partitions $X$.  Then $\mu X = \mu_{\mathcal{D}}X$.  
\item[{\em Proof of Claim 2.\/}]
Let $D_1 , \dots ,D_k \in \mathcal{D}$ be open with $X=_0 D_1 \cup \dots \cup D_k$. 
%and let $\{ \lambda (1), \dots , \lambda (l)  \} \subseteq \{ 1 , \dots ,k \}$ be such that $$p^{n+1}_{n}X = p^{n+1}_{n}D_{\lambda (1)} \dot\cup \dots \dot\cup p^{n+1}_{n}D_{\lambda (l)}.$$
To see that $\mu X\leq \sum_{i=1}^{k}\mu D_i$, let $a\in V^{\geq 0}$ be so that $\widetilde{a}<\mu X$.  By Lemma \ref{smalllemma}, $a\in \ma^{\geq 0}$.  We need to show that $\widetilde{a}<\sum_{i=1}^{k}\mu D_i$.  By Lemma \ref{inner}, we can find $y\in [0,1]$ such that $\widetilde{a}< \widetilde{y}\cdot \mu h^{-1}[y,1]$, where $h=g-f$.  
%By the inductive assumption, $$\mu h^{-1}[y,1] = \mu(h^{-1}[y,1] \cap p^{n+1}_{n} D_{\lambda (1)}) + \dots + \mu (h^{-1}[y,1] \cap p^{n+1}_{n} D_{\lambda (l)}).$$
\begin{itemize}
\item
If there is no $x\in \bigcup \mu h^{-1}[y,1]$ with $x>\ma$, then, using the inductive assumption, we can find $D \in \{ D_1 , \dots ,D_k  \}$ so that $$\mu h^{-1}[y,1] = \mu (h^{-1}[y,1] \cap p^{n+1}_{n}D).$$
%let $j\in \{ 1, \dots ,l \} $ be such that $$\mu h^{-1}[0,y]=\mu (h^{-1}[y,1] \cap D_{\lambda (j)} ).$$  
Let $\{ E_1 , \dots ,E_m  \}$ be the subset of $\{ D_1 , \dots , D_k \}$ consisting of all $D_i$'s with $p^{n+1}_{n}D_i=p^{n+1}_{n}D$.  For each $i \in \{ 1,\dots ,m \}$, let $E_i =(f_i , g_i )$, set $h_i = g_i - f_i$, and define $$F_i := \{ x\in h^{-1}[y,1]\cap p^{n+1}_{n}D:\; h_i  (x) \geq h_j (x) \mbox{ for }j=1,\dots,m \}.$$ 
Then $h^{-1}[y,1]\cap p^{n+1}_{n}D = \bigcup_{i=1}^{m}F_{i}$, and hence we can take $j \in \{ 1,\dots ,m \}$ so that $$\mu F_j =\mu (h^{-1}[y,1] \cap p^{n+1}_{n}D).$$
We claim that $\widetilde{a}< \mu E_j$.  This is because if $y\in \ma^{>0}$, then $\widetilde{y}\leq \widetilde{h_j (x)}$ for each $x\in F_j$.  And if $y>\ma$, then $\widetilde{y}\cdot \mu h^{-1}[y,1] = \mu h^{-1}[y,1]$ and $(g_j - f_j )(x)>\ma$ for each $x\in F_j$.
%Then $$(h^{-1} [y,1] \cap p^{n+1}_{n} D_{\lambda (j)}) \times [0,y] \subseteq (0,g_i -f_i ),$$ and hence $\mu E_i >\widetilde{a}$.
\item
Now suppose there is $x\in \bigcup \mu h^{-1}[y,1]$ with $x>\ma$.  Let $D \in \{ D_1,\dots ,D_k \}$ be such that $\bigcup \mu (h^{-1}[y,1] \cap p^{n+1}_{n} D)$ contains some $x>\ma$.  Then $$\widetilde{y}\cdot \mu h^{-1}[y,1] = \widetilde{y} \cdot \mu (h^{-1}[y,1] \cap p^{n+1}_{n}D)= \widetilde{y}.$$  Define $\{ E_1 , \dots ,E_m \}$ and the sets $F_i$ for $D$ as in the previous case.  Then for some $i\in \{ 1,\dots ,m \}$, there is $x \in \bigcup \mu F_i$ so that $x>\ma$.  Hence $\mu E_i >\widetilde{a}$.
\end{itemize}
%By 1., we can find a box $B\subseteq (0,g-f)$ with $\mu B >\widetilde{a}$.  Let $\mathcal{C}$ be a decomposition of $R^{n+1}$ which partitions the cell $(0,g-f)$, the box $B$, and each cell $(h_i -f, k_i -f)$ where $(h_i , k_i ) =D_i \subseteq X$ for some $D_i \in \mathcal{D}$, and where $f$ is assumed to be the restriction of $f$ to $p^{n+1}_{n}D_i$.  Assume for simplicity that $$B=[0,\epsilon_1 ] \times [0, \epsilon_2 ] \times \dots \times [0,\epsilon_{n+1}],$$ where $\epsilon_1 , \dots , \epsilon_{n+1}$ are necessarily in $\ma^{>0}$.
%Let $\tau \colon R^{n+1}\to R^{n+1}$ be given by $$\tau (x)= ( \frac{1}{\epsilon_1 }x_1 , \dots , \frac{1}{\epsilon_{n+1}}x_{n+1}).$$  Then $\tau B = [0,1]^{n+1}$,
%and $\interior{\pi \tau C} \not=\emptyset$ for some $C\in \mathcal{C}$ with $C\subseteq B$.  By Fact \ref{vbox}, we can find a box $P\subseteq \tau C$ so that $\interior{\pi P}\not=\emptyset$.  Then $\tau^{-1}P \subseteq C$ is also a box, and $$\mu \tau^{-1}P = \widetilde{\epsilon_1  \dots \epsilon_{n+1}} >\widetilde{a}.$$  Now $C\subseteq (h_i -f , k_i -f )$ for some $(h_i , k_i ) \in \mathcal{D}$, and it is left to show that $(0, h_i -k_i )$ also contains a box of size $\widetilde{\epsilon_1 \dots \epsilon_{n+1}}$.  But this is clear since $p^{n+1}_{n} (h_i -f , k_i -f )  = p^{n+1}_{n} (0, k_i - h_i )$, and it is easy to check that $p^{n+1}_{n}(\tau^{-1} P) \times [0,\epsilon_{n+1}] \subseteq (0,k_i -h_i )$.
To see that $\sum_{i=1}^{k}\mu D_i \leq \mu X$, let $a\in V^{\geq 0}$ be such that $\widetilde{a}<\sum_{i=1}^{k}D_i$.  By Lemma \ref{smalllemma}, $a\in \ma^{\geq 0}$.  Then $\sum_{i=1}^{k}\mu D_i = \mu D_j$ for some $j \in \{1,\dots ,k  \}$.  Let $D_j = (f_j , g_j )$ and set $h_j = g_j -f_j$.  Then there is $y\in [0,1]$ with $$\widetilde{y}\cdot \mu (h_{j}^{-1}[y,1]) > \widetilde{a},$$ and $$\widetilde{y}\cdot \mu \big( h_{j}^{-1} [y,1]\big) \leq \widetilde{y}\cdot h^{-1}[y,1] \leq \mu X.$$
This finishes the proof of Claim 2.

\item[{\em Claim 3.\/}] Let $X$ be a definable set, and let $\mathcal{C}$ and $\mathcal{D}$ be decompositions of $R^{n+1}$ into cells that partition $X$.  Then $\mu_{\mathcal{C}}X=\mu_{\mathcal{D}}X$.
\item[{\em Proof of Claim 3.\/}]
%Let $X$ be a definable set and let $\mathcal{C}$ and $\mathcal{D}$ be decompositions of $R^{n+1}$ into cells that partition $X$.  
Let $\mathcal{E}$ be a decomposition of $R^{n+1}$ into cells which is a common refinement of $\mathcal{C}$ and $\mathcal{D}$.  Then

$$\mu_{\mathcal{D}}X=\sum_{ D_i \subseteq X} \mu D_i =\sum_{D_i \subseteq X} \sum_{E_{ij} \subseteq D_i } \mu E_{ij} = \sum_{C_{k}\subseteq X} \sum_{E_{kl}\subseteq C_{k}} \mu E_{kl} = \sum_{C_{k}\subseteq X} \mu C_{k} = \mu_{\mathcal{C}}X,$$ where $D_i \in \mathcal{D}$, $E_{ij}, E_{kl}\in \mathcal{E}$ and $C_{k}\in \mathcal{C}$.

This finishes the proof of Claim 3, and we have thus proven Case 1.
%\end{enumerate}
\end{trivlist}
\end{trivlist}

\item[{\bf Case 2.\/}] $\interior{\pi X} \not=\emptyset$.  

\smallskip\noindent
Since $\pi X$ is definable in the o-minimal structure $\mathbb{R}_0$, it is Lebesgue measurable, and $\underline{\mu}_{\mathcal{P}}\pi X =\overline{\mu}_{\mathcal{P}}\pi X = \widetilde{a}$, where $a\in \mathbb{R}^{>0}$ is the Lebesgue measure of $\pi X$, and $\mathcal{P}$ is any decomposition of $\mathbb{R}^{n+1}$ into cells that partitions $\pi X$.  We shall thus write $\mu Y$ instead of $\overline{\mu}_{\mathcal{P}} Y$ and $\underline{\mu}_{\mathcal{P}}Y$ if $Y$ is an $\mathbb{R}_0$-definable subset of $[0,1]^{m} \subseteq \mathbb{R}^m$.
%Note that since $\widetilde{a}=\widetilde{b}$ whenever $a,b \in V^{>\ma}$ are such that $\pi a =\pi b$, we may and shall abuse notation by writing $\widetilde{c}$ for $c\in \mathbb{R}^{> 0}$, to mean $\widetilde{a}$ where $a\in V^{> \ma}$ is such that $\pi a =c$.

Our aim is to show that
$\underline{\mu}_{\mathcal{D}} X = \overline{\mu}_{\mathcal{D}} X = \widetilde{a}$.
Since this is clearly satisfied when $X\subseteq [0,1]$, we may assume that the inductive assumption holds in this a priori stronger form.  
\begin{trivlist}
\item[\em Claim 1.\/] Suppose $X=(f,g) \subseteq [0,1]^{n+1}$ is a cell.
%, and let $\mathcal{E}$ be a decomposition of $R^{n+1}$ into cells that partitions $X$.  
Then 
$\underline{\mu} X = \overline{\mu} X = \widetilde{a}$.
\item[\em Proof of Claim 1.\/] We set $h=g-f$.  
By o-minimality of $\mathbb{R}_0$, there are $\mathbb{R}_0$-definable functions $f_0$, $g_0$, and $h_0$ with $$\domain{f_0}=\domain{g_0}=\domain{h_0}=_0 \pi p^{n+1}_{n}X$$ and such that for all $x\in \domain{f_0 }$, $$f_0(x) = \pi f(x'), \; g_0 (x)=\pi g(x') \mbox{ and } h_0 (x)=\pi h(x'),$$ where $x'\in p^{n+1}_{n}X$ is such that $\pi (x')=x$.

%$\pi \Gamma s =_0 \Gamma s_0 $, $\pi \Gamma t =_0 \Gamma t_0$ and $\pi \Gamma f=_0 \Gamma f_0$ for some $\mathbb{R}_0$-definable functions $s_0$, $t_0$ and $f_0$ with $t_0 - s_0 =_0  f_0 $.

Let $\mathcal{C}_0$ be a decomposition of $\mathbb{R}^{n}$ into cells that partitions the domain of $h_0$ and is such that whenever $C \in \mathcal{C}_0$ is open and $C \subseteq \domain{h_0}$, then $h_0$ is differentiable on $C$ and each $\frac{\partial h_0 }{\partial x_i}$ has constant sign. 

By Fact \ref{closed}, we can find for each $C\in \mathcal{C}_0$ an $R$-definable set $X_C$ so that $\pi X_C = \closure{C}$.
Let $\mathcal{D}_0$ be a decomposition of $R^{n}$ partitioning $p^{n+1}_{n}X$ and $X_C$ for each $C\in \mathcal{C}_0$ with $C \subseteq \domain{h_0 }$.
%Then $\pi D \subseteq \closure{C}$ for each $D\in \mathcal{D}$ with $D\subseteq X$.

\smallskip\noindent
\begin{subclaim}
Let $D\in \mathcal{D}_0$ be such that $D\subseteq p^{n+1}_{n}X$.  Set $X_D := (0,h) \cap (D\times R)$ and suppose $\interior{\pi X_D} \not=\emptyset$.  Then $\underline{\mu} X_D = \overline{\mu} X_D=\widetilde{d}$, where $d$ is the Lebesgue measure of $\pi X_D$.
\end{subclaim}
\begin{proof of subclaim}
We replace for the moment $h$ with $h|_{D}$, and $h_0$ with $h_0 |_{\interior{\pi D}}$.
%Then $\pi X = \closure{s_0 , t_0 }$ and we set $f_0 = t_0 - s_0$.
%Then $\Gamma f_0 \subseteq \pi_0 \Gamma f$ and $\domain{f_0 } = \interior{\pi p^{n+1}_{n}\Gamma f}$.
%We have $\mu (0, f_0 )= \mu \pi (0,f)$, since $$\dim \big( \pi X \setminus (0,f_0 ) \big) <n+1 .$$
We shall show $\overline{\mu}(0,h) \leq \widetilde{d}$ and $\widetilde{d} \leq \underline{\mu}(0,h)$. To prove the first inequality, let $d'\in \mathbb{R}$ be such that $\widetilde{d}<\widetilde{d'}$.
We wish to show that $ \overline{\mu}(0, h) < \widetilde{d'}$.  Let $0=a_0 < \dots < a_k =1$ be real numbers so that
$$\sum_{i=0}^{k-1} \widetilde{a_{i+1}} \cdot \mu h_{0}^{-1}[a_{i} , a_{i+1}] < \widetilde{d'}.$$
By Fact \ref{intersection}, for each $i$, we can find $\epsilon_i \in \ma^{\geq 0}$ so that

$$\pi \big( \Gamma h \cap (R^n \times [b_{i} -\epsilon_{i} , b_{i+1} +\epsilon_i ])    \big) = \Gamma h_0 \cap (\mathbb{R}^n \times [a_{i} , a_{i+1}])$$ up to a set of dimension $<n$, where $b_{i} , b_{i+1}\in R$ are such that $\pi b_{i} =a_{i}$ and $\pi b_{i+1}=a_{i+1}$.
Inductively, $$\mu h^{-1}[b_{i} -\epsilon_i , b_{i+1} + \epsilon_i ] = \mu \pi h^{-1}[b_{i} -\epsilon_i , b_{i+1} + \epsilon_i ],$$ hence $$\mu h^{-1}[b_{i} -\epsilon_i , b_{i+1} + \epsilon_i ] = \mu h_{0}^{-1}[a_{i} , a_{i+1}].$$
So
%$$\mu p^{n+1}_{n} \big(  \pi \Gamma f \cap ( \mathbb{R}^{n} \times [a_i , a_{i+1}])     \big) = \mu p^{n+1}_{n} \big(  \Gamma f_0 \cap (\mathbb{R}^{n} \times [a_i , a_{i+1}])     \big),$$ so
\[
\begin{array}{lll}
&&\sum_{i=0}^{k-1} \widetilde{b}_{i+1} \cdot \mu  h^{-1}[b_i , b_{i+1} ] \leq \\  && \sum_{i=0}^{k-1} \widetilde{(b_{i+1} + \epsilon_i )} \cdot \mu h^{-1}[b_{i}-\epsilon_i , b_{i+1} + \epsilon_i ] 
 = \\
&& \sum_{i=0}^{k-1} \widetilde{a_i} \cdot \mu  h_{0}^{-1} [a_i , a_{i+1}] < \widetilde{d'}.

\end{array}
\]

%\end{trivlist}

Next, we need to show that $\widetilde{d}  \leq \underline{\mu } (0,h)$. 
There are two cases to be considered.

\begin{itemize}
 \item[1.] Suppose $\frac{\partial h_0 }{\partial x_j} =0$ for all $j$.  

Then $\widetilde{d} = \mu p^{n+1}_{n}(0,h_0 ) \cdot \widetilde{ h_0 (x)}$ for any $x\in p^{n+1}_{n}(0,h_0 )$.  Let $b\in V^{>\ma}$ be such that $\pi (b)=h_0 (x)$.  By Fact \ref{intersection}, we can find $\epsilon \in \ma^{\geq 0}$ so that $$\pi (\Gamma h \cap (R^n \times [b-\epsilon , b+ \epsilon ])) = \Gamma h_0 $$ up to a set of dimension $<n$.  Then
$$\widetilde{d} = \widetilde{h_0 (x)}\cdot \mu p^{n+1}_{n}(0,h_0 )  \leq  \widetilde{(b-\epsilon )}\cdot \mu h^{-1} [b-\epsilon , b+ \epsilon ] + \widetilde{(b+\epsilon )} \cdot \mu h^{-1}[b+\epsilon , 1]$$ by the inductive assumption.

\item[2.] Suppose $\frac{\partial h_0}{\partial x_j} \not= 0$ for some $j$.  Let $d'\in V^{>\ma}$ be such that $\widetilde{d'} <\widetilde{d}$.  We wish to show that $\widetilde{d'}<\underline{\mu } (0,h)$.

Let $$0=a_0 <\dots <a_k =1$$ be elements of $\mathbb{R}$ so that $\widetilde{d'} < \sum_{i=0}^{k} \widetilde{a_i } \cdot \mu h_{0}^{-1}[a_i , a_{i+1}]$, and let $$0=b_0 < \dots < b_k = 1$$ be elements of $R$ such that $\pi b_i = a_i$ for each $i$.
Then, for each $i$, $\mu \pi h^{-1} [b_i , b_{i+1}] = \mu h_{0}^{-1} [a_i , a_{i+1}]$:  The inequality $$\mu \pi h^{-1} [b_i , b_{i+1}] \leq \mu h_{0}^{-1} [a_i , a_{i+1}]$$ is clear by the inductive assumption.  To prove the other inequality, let $\epsilon \in \ma^{>0}$ be such that 
$$\pi h^{-1}[b_i - \epsilon , b_{i+1} + \epsilon ]=h^{-1}_{0}[a_{i},a_{i+1}].$$
Then $$\pi h^{-1}[b_i -\epsilon , b_{i+1} + \epsilon ]= \pi h^{-1}[b_i -\epsilon , b_i ]\cup \pi h^{-1}[b_i , b_{i+1}] \cup \pi h^{-1} [b_{i+1}, b_{i+1}+\epsilon ],$$ where the sets on the right-hand side are disjoint apart from a set of dimension $<n$.  Hence
$$\mu h^{-1}_0 [a_i , a_{i+1}] = \mu \pi h^{-1}[b_{i}-\epsilon , b_i ]+\mu \pi h^{-1}[b_i , b_{i+1}]+\mu \pi h^{-1}[b_{i+1} , b_{i+1} + \epsilon ].$$
But $$\mu \pi h^{-1}[b_i -\epsilon ,b_i ]=\mu \pi h^{-1}[b_{i+1},b_{i+1}+\epsilon ]=0,$$
because $\mu h^{-1}_{0}(a_i ) = \mu h^{-1}_{0}(a_{i+1}) =0$.

%assume towards a contradiction that $$\mu \pi h^{-1} [b_i , b_{i+1}] < \mu h_{0}^{-1} [a_i , a_{i+1}],$$ and let $\epsilon \in \ma^{>0}$ be such that $\pi h^{-1}[b_{i}-\epsilon , b_{i+1}+\epsilon ] = h_{0}^{-1} [a_i , a_{i+1}]$.  Then $$\mu (\pi h^{-1}[b_i -\epsilon , b_{i+1} + \epsilon ] \setminus \pi h^{-1}[b_i , b_{i+1}])>0,$$ but 
%$$(\pi h^{-1}[b_i -\epsilon , b_{i+1} + \epsilon ]) \setminus \pi h^{-1}[b_i , b_{i+1}] =_0 \pi h^{-1}[b_i - \epsilon , b_i ] \cup \pi h^{-1} [b_{i+1}, b_{i+1} + \epsilon ],$$ and 
%since $h_0$ is constant on $\pi h^{-1}[b_i - \epsilon , b_i ]$ as well as on $\pi h^{-1} [b_{i+1}, b_{i+1} + \epsilon ]$, the measure of the union of these two sets cannot be positive.

It follows that $$\widetilde{d'}< \sum_{i=0}^{k-1} \widetilde{a_i } \cdot \mu h_{0}^{-1} [a_i , a_{i+1}] = \sum_{i=0}^{k-1} \widetilde{b_i} \cdot \mu  h^{-1}[b_i , b_{i+1}].$$ 
\end{itemize}
This proves $\widetilde{d} < \underline{\mu}(0,h)$, and hence
$\underline{\mu}X_D =\overline{\mu}X_D =\widetilde{d}$. 
\end{proof of subclaim}

\smallskip\noindent
Now let $p^{n+1}_{n}X=\bigcup_{i=1}^{k}D_i ,$ where each $D_i \in \mathcal{D}_0$.  Then each set $X\cap (D_i \times R)$ is a cell, and
$$X= \big( (D_1 \times R)  \cap X \big) \dot\cup  \dots \dot\cup \big( (D_k \times R)\cap X \big).$$
Let $\mathcal{D}$ be a decomposition of $R^{n+1}$ into cells such that $ (D_i \times R) \cap X \in \mathcal{D}$ for each $i\in \{1,\dots ,k  \}$, and let $I\subseteq \{1,\dots ,k \}$ consist of all the $i$ with
$$\interior{\pi ( (D_i \times R) \cap X)} \not= \emptyset .$$
%We first show that $\underline{\mu}_{\mathcal{D}}X =\overline{\mu}_{\mathcal{D}}X=\widetilde{a}$.
By the subclaim, if $i\in I$, then
we can find $a_i \in V^{>\ma}$ so that $\pi a_i$ is the Lebesgue measure of $\pi \big( (D_i \times R) \cap X \big)$ and  $$\mu \big((D_i \times R) \cap X \big) = \underline{\mu } \big( (D_i \times R)\cap X\big)= \overline{\mu} \big( (D_i \times R) \cap X \big)= \widetilde{a_i }.$$
For $i\in \{ 1,\dots ,k \}\setminus I$, we set $a_i = 0$.
%with $\interior{\pi (X\cap (D_i \times R))} \not=\emptyset$.  
%Then
%$$\sum_{i=1}^{k} \mu ((D_i \times R)\cap X) = \sum_{i=1}^{k} \widetilde{a_i }= \widetilde{a},$$ where we are justified to write $\mu ((D_i \times R)\cap X)$ for $i$ with $$\interior{\pi ((D_i \times R)\cap X)}=\emptyset$$ by Case 1, and where the first equality follows from Lemma \ref{smalllemma}.  We thus have $\underline{\mu}_{\mathcal{D}}X=\overline{\mu}_{\mathcal{D}}X =\widetilde{a}$.
Note that $\sum_{i=1}^{k} \pi a_i =\pi a$.
To prove $\underline{\mu}X=\overline{\mu}X=\widetilde{a}$, let $a'\in R^{>\ma}$ be such that $\widetilde{a}<\widetilde{a'}$.
We need to show $\overline{\mu}X < \widetilde{a'}$.  Let for each $i\in \{ 1,\dots ,k  \}$, $$0=b_{i_0} < b_{i_1}<\dots  <b_{i_{k_i}}=1$$ be a partition of $[0,1]$ so that the corresponding upper sum of $h|_{D_i }$ has measure at most $\widetilde{a_i } + \widetilde{\frac{a'-a}{k}}$.  
Such a partition exists for $i\in I$ by the subclaim, and for $i\in \{ 1,\dots ,k  \}\setminus I$ by Case 1.

Now let $\{ b_0 , \dots ,b_m \}$ be a partition of $[0,1]$ which is a common refinement of all $\{ b_{i_0},\dots ,b_{i_{k_i}}  \}$ where $i=1,\dots ,k$.  Then the upper sum of $h|_{D_i}$ corresponding to this new partition is again at most 
$\widetilde{a_i } + \widetilde{\frac{a'-a}{k}}$.  Furthermore, $$\sum_{i=1}^{m} \widetilde{b_{i}} \cdot \mu h^{-1}[b_{i-1},b_i ] =\sum_{i=1}^{m} (\sum_{j= 1}^{k} \widetilde{b_i } \cdot \mu (h^{-1}[b_{i-1},b_i ] \cap D_j )) < \widetilde{a'},$$ where the first equality follows from the inductive assumption.
The inequality $\widetilde{a} \leq \underline{\mu}X$ is proved similarly.
This finishes the proof of Claim 1.
\end{trivlist}

\begin{trivlist}
\item[{\em Claim 2.\/}]
 $\underline{\mu}X_{\mathcal{E}}=\overline{\mu}_{\mathcal{E}}X=\widetilde{a}$.
\item[{\em Proof of Claim 2.\/}]
Let $E_1 , \dots ,E_k \in \mathcal{E}$ be open such that $X=_0 \bigcup_{i=1}^{k}E_i$.  Since $\interior{\pi E_i } \not=\emptyset$ for at least one $i$, we may as well assume (by Lemma \ref{smalllemma}) that $\interior{\pi E_i } \not=\emptyset$ for each $i$.  Now, by the above, $\underline{\mu}E_i = \overline{\mu}E_i = \widetilde{b}_i$, where $\pi b_i$ is the Lebesgue measure of $\pi D_i$.  Hence $$\underline{\mu}X=\sum_{i=1}^{k}\underline{\mu}D_i = \sum_{i=1}^{k}\overline{\mu}D_i = \overline{\mu}X.$$
\end{trivlist}
This finishes the proof of Claim 2, thus the proof of Case 2, and hence the proof of the theorem.
%\smallskip\noindent
%If $X$ is a definable set, then it is now clear that for any decomposition $\mathcal{P}$ of $R^{n+1}$ into cells partitioning $X$, $\underline{\mu}_{\mathcal{P}}X=\overline{\mu}_{\mathcal{P}}X$.  Also, if $\mathcal{P'}$ is another decompositions of $R^{n+1}$ into cells partitioning $X$, then $$\mu_{\mathcal{P}}X = \sum_{i} \mu  \pi P_i = \sum_{j} \mu \pi P'_j= \mu_{\mathcal{P'}}X,$$ where the first sum is taken over all $i$ so that $P_i\in \mathcal{P}$ and $P_i \subseteq X$, and the second sum is taken over all $j$ so that $P'_{j} \in \mathcal{P'}$ and $P'_j \subseteq X$.  This finishes the proof of Case 2.
\end{proof}
\end{section}

\begin{section}{Measuring definable subsets of $V^n$ and invariance of $\mu$ under isomorphisms}
%Let $X,Y \subseteq R^n$ be definable.  Below we use $X\subseteq_0 Y$ as a shortform for $\dim{(X\setminus Y)}<n$, and $X=_0 Y$ stands for $X\subseteq_0 Y$ and $Y \subseteq_0 X$.
The following definition is from \cite{nip}.
By $J\phi (x)$ we denote the determinant of the Jacobian of a diffeomorphism $\phi$ at $x$. 
\begin{definition}\label{iso}
Let $SB[n]$ be the lattice of all $R$-definable subsets of $V^n$, and let $X,Y \in SB[n]$.  An {\em isomorphism\/} $\phi \colon X\to Y$ is defined to be a definable $C^1$-diffeomorphism $\phi \colon U \to V$, where $U$ and $V$ are open definable subsets of $R^n$, $X\subseteq_0 U$, $Y \subseteq_0 V$, $|J\phi (x)|=1$ for all $x\in U\cap X$ up to a set of dimension $<n$, and $\phi (X)=_0 Y$.
\end{definition}
Let $C\subseteq V^n$ be an open cell with $C=(f_n ,g_n )$ and $p^{n}_{k}C = (f_{k},g_{k})$ for $k=1,\dots ,n-1$.  Suppose that $f_i$ and $g_i$ are continuously differentiable for $i=2,\dots ,n$.  We define a map $$\tau_C = (\tau_1 , \dots ,\tau_n ) \colon C \to \tau C$$ by setting $\tau_k (x)=x_k - f_k (x_1 , \dots ,x_{k-1})$ for $x=(x_1 , \dots ,x_n ) \in C$ and $k=1, \dots ,n$.  It is routine to check that $\tau$ is an isomorphism $C\to \tau C$.
\begin{lemma}\label{box}
Let $X \subseteq [0,1]^n$ be definable and such that $\interior{\pi X}=\emptyset$.  Then for each $a\in V^{>0}$ with $\widetilde{a}<\mu X$, there is a cell $C \subseteq X$ and a box $B\subseteq_0 \tau_C C$ with $\mu B>\widetilde{a}$.
\end{lemma}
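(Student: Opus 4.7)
The plan is to prove the lemma by induction on $n$. For the base case $n = 1$, the hypothesis $\mu X > \widetilde{a}$ forces some open subinterval $(c, d) \subseteq X$ to satisfy $\widetilde{d - c} > \widetilde{a}$; then $C = (c, d)$ and $B = [\epsilon, d - c - \epsilon]$ for small $\epsilon \in \ma^{>0}$ give $\mu B = \widetilde{d - c}$ (since $d - c - 2\epsilon \sim d - c$), hence $\mu B > \widetilde{a}$.

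For the inductive step at $n + 1$, I would first reduce to the case where $X = (f, g)$ is an open cell: by Lemma \ref{smalllemma} combined with part (ii) of Remark \ref{rem}, each summand $\mu D_i$ in a cell decomposition of $X$ is non-real, so $\sum_i \mu D_i = \max_i \mu D_i$, and some open cell $D_{i_0} \subseteq X$ has $\mu D_{i_0} > \widetilde{a}$; renaming this as $X = (f, g)$, set $h = g - f$. By Lemma \ref{inner} there is $y \in [0, 1]$ with $\widetilde{a} < \widetilde{y} \cdot \mu h^{-1}[y, 1]$; unpacking the product cut, I pick $a' \in V^{>0}$ with $\widetilde{a'} < \mu h^{-1}[y, 1]$ and $\widetilde{y} \cdot \widetilde{a'} > \widetilde{a}$.

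The construction now splits on whether $\interior{\pi h^{-1}[y, 1]}$ is empty. If empty, the inductive hypothesis applied to $h^{-1}[y, 1] \subseteq [0, 1]^n$ at $a'$ yields an open cell $C_0 \subseteq h^{-1}[y, 1]$ and a box $B_0 \subseteq_0 \tau_{C_0} C_0$ with $\mu B_0 > \widetilde{a'}$. If nonempty, then $\mu h^{-1}[y, 1] = \widetilde{r}$ for some $r \in V^{>\ma}$ (by Case 2 of Theorem \ref{main}), and Fact \ref{vbox} directly supplies a box $B' \subseteq h^{-1}[y, 1]$ with $\interior{\pi B'} \neq \emptyset$. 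In both sub-cases, let $E$ denote the base ($C_0$ or $B'$), pick $y^* \in R$ with $y^* < y$ and $y^* \sim y$, and form the extruded cell $C = \{(x, z) : x \in E,\; f(x) < z < f(x) + y^*\} \subseteq X$. Then $\tau_C C = \tau_E E \times (0, y^*)$, and I take $B = B_0 \times [\epsilon, y^* - \epsilon]$ in the first sub-case, or an analogous closed sub-box of $\tau_{B'} B' \times (0, y^*)$ in the second, with $\epsilon \in \ma^{>0}$ chosen small enough to preserve the relevant $\sim$-classes.

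For the measure verification in the first sub-case, $\mu B = \mu B_0 \cdot \widetilde{y}$, and the bound $\mu B > \widetilde{a}$ follows from a direct containment argument: since $\mu B_0 \supseteq \widetilde{a'}$, also $\widetilde{y} \cdot \mu B_0 \supseteq \widetilde{y} \cdot \widetilde{a'} \supsetneq \widetilde{a}$. In the second sub-case, $\mu B = \widetilde{\beta y^*}$ where $\beta > \ma$ is the product of side lengths of $B'$; if $y \in \ma^{>0}$ then $\widetilde{\beta y^*} = \widetilde{y} = \widetilde{y r}$ and $\widetilde{y} \cdot \widetilde{r} > \widetilde{a}$ gives $\mu B > \widetilde{a}$, while if $y > \ma$ then $\widetilde{\beta y^*}$ is a standard positive, and $\widetilde{a}$ is infinitesimal (by Lemma \ref{smalllemma}), so the inequality is immediate. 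The main subtlety is that strict multiplicative monotonicity $A > A' \Rightarrow Y \cdot A > Y \cdot A'$ can fail in $\widetilde{V}$ (since $\sim$ identifies many elements of $\ma^{>0}$, e.g.\ $y \sim y/2$), which is why the direct containment argument above, rather than a naive chain of inequalities, is required.
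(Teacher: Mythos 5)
Your proposal is correct and follows essentially the same inductive strategy as the paper: reduce to a dominant open cell $(f,g)$ via Lemma \ref{smalllemma} and Remark \ref{rem}, invoke Lemma \ref{inner} (equivalently the definition of the lower measure) to get $y$ with $\widetilde{a}<\widetilde{y}\cdot\mu h^{-1}[y,1]$, and split on whether $\interior{\pi h^{-1}[y,1]}$ is empty, applying the inductive hypothesis in the empty case and Fact \ref{vbox} in the nonempty case. The only difference is cosmetic: you build the $(n+1)$-dimensional cell $C$ by extruding over $f$ above the base $E$, so that $\tau_C C=\tau_E E\times(0,y^*)$ falls out by inspection, rather than working entirely inside the normalized cell $\tau_D D=(0,h)$ as the paper does.
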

\begin{proof}
Let $a\in V^{\geq 0}$ be such that $\widetilde{a}<\mu X$.  
Let $\mathcal{D}$ be a decomposition of $R^n$ into cells that partitions $X$.  Suppose $X=D_1 \cup \dots \cup D_m$, where each $D_i \in \mathcal{D}$.  Since $\interior{\pi X}=\emptyset$, by Lemma \ref{smalllemma} we can find $D\in \{ D_1 , \dots ,D_m  \}$ so that $\mu D = \mu X$.  We shall find a box $B\subseteq_0 \tau_D (D)$ with $\mu D >\widetilde{a}$.

If $n=1$, then $\tau D$ is the required box. 
So assume the lemma holds for $1,\dots ,n$, and let $X\subseteq [0,1]^{n+1}$. Suppose $\tau_D D=(0,h)$.  Then we can find a partition $0=y_0 <y_1 <\dots <y_l =1$ of $[0,1]$ so that $$\widetilde{a}< \sum_{i=1}^{l}\widetilde{y}_{i-1} \cdot \mu h^{-1}[y_{i-1},y_i],$$ and $\sum_{i=1}^{l}\widetilde{y}_{i-1}\cdot \mu h^{-1}[y_{i-1},y_i ] = \widetilde{y}_{j-1} \cdot \mu h^{-1}[y_{j-1},y_j ]$ for some $j\in \{ 1,\dots ,l  \}$.  

If $\interior{\pi h^{-1}[y_{j-1},y_j ]}=\emptyset$, then $h^{-1}[y_{j-1},y_j ]$ contains a cell $C$ of measure $\mu h^{-1}[y_{j-1},y_j ]$, and $$\tau_C C\subseteq \tau_{p^{n+1}_{n}D} h^{-1}[y_{j-1},y_j]  \subseteq \tau_{p^{n+1}_{n}D} p^{n+1}_{n} D =  p^{n+1}_{n} \tau_D D .$$ Let $c\in V^{>0}$ be such that $\widetilde{c} < \mu h^{-1}[y_{j-1},y_j ]$ and $\widetilde{a}<\widetilde{y}_{j-1}\cdot \widetilde{c}$.
By the inductive assumption, $\tau_C C$ contains a box $B_0$ with $\widetilde{c}<\mu B_0$.  Then $B_0 \times [0,y_{j-1}] \subseteq \tau_{D} D$ and $\mu (B_0 \times [0,y_{j-1}]) >\widetilde{a}$.

If $\interior{\pi h^{-1}[y_{j-1},y_j ]}\not=\emptyset$, then $h^{-1}[y_{j-1},y_j ]$ contains a cell $C$ such that $\interior{\pi C}\not=\emptyset$.  Then $\interior{\pi \tau_C C} \not=\emptyset$ and, by Fact \ref{vbox}, $\tau_C C$ contains a box $B_0$ of measure $>\ma$.  Then $B_0 \times [0,y_{j-1}]$ is as required.
%Now, let $X\subseteq V^n$, and let $a\in V^{\geq 0}$ be such that $\widetilde{a}<\mu X$.  We may assume that $X\subseteq [0,m]^n$ for some positive integer $m$, and $\mu X = \mu \phi X$, where $\phi \colon X\to \phi X$ is given by $\phi (x)=(\frac{1}{m}x_1 , \dots ,\frac{1}{m}x_n )$.  By the case above, $\phi X$ contains a cell $C$ so that $\tau C$ contains a box of measure $>\widetilde{a}$.  Hence $X$ also contains a box of measure $>\widetilde{a}$.
\end{proof}

\begin{lemma}\label{boxlemma}
Let $X \subseteq [0,1]^n$ be definable with non-empty interior, and let $a\in V^{>0}$ be such that $\mu X <\widetilde{a}$.  Then there are open cells $C_1, \dots ,C_k \subseteq [0,1]^n$ so that $X=_0 C_1 \dot\cup \dots \dot\cup C_k$, and for each $i\in \{ 1,\dots ,k \}$ there are 
%pairwise disjoint 
boxes $B_{i1} , \dots ,B_{ik_i } \subseteq [0,1]^n$ with $\tau C_i \subseteq \bigcup_{j=1}^{k_i} B_{ij}$ and $\sum_{i=1}^{k}\sum_{j=1}^{k_i
}\mu B_{ij} < \widetilde{a}$.
\end{lemma}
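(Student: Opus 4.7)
The plan is to proceed by induction on $n$. For $n = 1$, any $X \subseteq [0,1]$ is, modulo a finite set, a finite union of open intervals, each simultaneously an open cell and a box; $\tau$ of $(a,b)$ is $(0,b-a)$, still a box, and by Theorem \ref{main} the total measure equals $\mu X < \widetilde{a}$. One then takes these intervals to serve as both the $C_i$ and the single $B_{i,1}$.

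For the inductive step with $X \subseteq [0,1]^{n+1}$, fix a cell decomposition of $R^{n+1}$ partitioning $X$ and let $C_1,\dots,C_k$ be its open cells contained in $X$, so $X =_0 \bigcup_i C_i$ and $\sum_i \mu C_i = \mu X < \widetilde{a}$ by Theorem \ref{main}. Using that Dedekind cuts are dense in $\widetilde{V}$ (and treating separately the real-valued and infinitesimal cases governed by Remark \ref{rem}), distribute an allowance $\widetilde{a_i} > \mu C_i$ across the cells so that the $\widetilde{a_i}$ aggregate to something $< \widetilde{a}$. Write $\tau C_i = (0, h_i)$ over the straightened base $\tau B_i = \tau_{p^{n+1}_n C_i}(p^{n+1}_n C_i) \subseteq [0,1]^n$, and by the infimum definition of $\overline{\mu}$ pick a partition $0 = z_0 < \dots < z_m = 1$ whose upper sum $\sum_j \widetilde{z_j} \cdot \mu h_i^{-1}[z_{j-1}, z_j]$ is strictly below $\widetilde{a_i}$, with room to spare.

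For each $j$ with $\interior{h_i^{-1}[z_{j-1}, z_j]} \neq \emptyset$, apply the inductive hypothesis to $h_i^{-1}[z_{j-1}, z_j] \subseteq [0,1]^n$ with tolerance $\widetilde{b_{i,j}} > \mu h_i^{-1}[z_{j-1}, z_j]$ chosen so that $\sum_j \widetilde{z_j}\,\widetilde{b_{i,j}} < \widetilde{a_i}$. This produces open cells $D_{i,j,l}$ partitioning $h_i^{-1}[z_{j-1}, z_j]$ up to measure zero, together with boxes $F_{i,j,l,r} \subseteq [0,1]^n$ covering each $\tau D_{i,j,l}$, with $\sum_{l,r} \mu F_{i,j,l,r} < \widetilde{b_{i,j}}$. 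For $j$ with $\interior{h_i^{-1}[z_{j-1}, z_j]} = \emptyset$, no cells are produced, and the corresponding slab of $\tau C_i$, being of dimension $\leq n$, is absorbed into the $=_0$ discrepancy. Define the refined cells $C_{i,j,l} \subseteq C_i$ as the pullback under $\tau_{C_i}^{-1}$ of the sub-cell $\{(y, y_{n+1}) \in \tau C_i : y \in D_{i,j,l},\; 0 < y_{n+1} < h_i(y)\}$; the collection of all $C_{i,j,l}$ is an open-cell partition of $X$ modulo a set of dimension $\leq n$.

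The crucial identity, and the main obstacle, is the chain rule for iterated cell-straightenings: one verifies that $\tau_{p^{n+1}_n C_{i,j,l}} = \tau_{D_{i,j,l}} \circ \tau_{p^{n+1}_n C_i}$ on $p^{n+1}_n C_{i,j,l}$, so the straightened base of $\tau C_{i,j,l}$ is precisely $\tau D_{i,j,l}$. Consequently $\tau C_{i,j,l} \subseteq \tau D_{i,j,l} \times [0, z_j] \subseteq \bigcup_r (F_{i,j,l,r} \times [0, z_j])$, and each $F_{i,j,l,r} \times [0, z_j]$ is a box in $[0,1]^{n+1}$ of measure $\widetilde{z_j} \cdot \mu F_{i,j,l,r}$. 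Summing,
\[
  \sum_{i,j,l,r} \widetilde{z_j} \cdot \mu F_{i,j,l,r} \;=\; \sum_{i,j} \widetilde{z_j} \sum_{l,r} \mu F_{i,j,l,r} \;<\; \sum_{i,j} \widetilde{z_j}\,\widetilde{b_{i,j}} \;<\; \sum_i \widetilde{a_i} \;<\; \widetilde{a},
\]
which gives the required box families and closes the induction.
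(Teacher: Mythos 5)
Your proposal follows the same basic skeleton as the paper's argument: induction on dimension, use the upper-sum infimum to choose a height partition $\{z_j\}$, apply the inductive hypothesis to the horizontal slices, and take products of base boxes with the height intervals $[0,z_j]$. The ``crucial identity'' $\tau_{p^{n+1}_n C_{i,j,l}} = \tau_{D_{i,j,l}}\circ\tau_{p^{n+1}_n C_i}$ is indeed correct (straightenings compose level by level), so pulling cells back from the already-straightened base works; the paper avoids needing this by applying the inductive hypothesis to $h^{-1}[y_{i-1},y_i]$ in the \emph{unstraightened} coordinates and observing directly that $\tau_D D \subseteq \tau_{p^{n+1}_n D}(p^{n+1}_n D)\times[0,y_i]$.

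The genuine gap is the phrase ``with room to spare'' covering the two slack distributions: choosing $\widetilde{a_i}>\mu C_i$ with $\sum_i\widetilde{a_i}<\widetilde{a}$, and then choosing $\widetilde{b_{i,j}}>\mu h_i^{-1}[z_{j-1},z_j]$ with $\sum_j \widetilde{z_j}\,\widetilde{b_{i,j}}<\widetilde{a_i}$. In $\widetilde{V}$ this is not an automatic density argument: the second step requires knowing that, given $\widetilde{z_j}\cdot M<\widetilde{a_i}$ with $M$ a Dedekind cut, one can bump $M$ strictly upward while multiplication by the infinitesimal $\widetilde{z_j}$ stays strictly below $\widetilde{a_i}$. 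That ``continuity of multiplication from above'' does hold, but it requires a computation in the valuation arithmetic of $\sim$ (replacing $\epsilon$ by $\epsilon^p$ with $p<1$ close to $1$ and comparing exponents), and the behavior is qualitatively different depending on whether $\mu C_i$, the $z_j$, and the slice-measures are infinitesimal or not (Remark \ref{rem}(i)--(ii): in the infinitesimal regime $+$ is $\max$, while a non-infinitesimal part is Archimedean and absorbs infinitesimals). This is exactly where the paper's proof spends nearly all of its effort: Case 1 vs.\ Case 2 on whether $\mu X$ is infinitesimally bounded, and within each a sub-case analysis on whether $y_i$ (and the slice measure) is infinitesimal, producing explicit tolerances such as $\widetilde{b}+\widetilde{\frac{a-b}{k}}$, $\widetilde{c/y_i}$, and $\widetilde{b/y_i}$, plus the non-trivial argument in Case~2(3) that $\widetilde{y_i}<\widetilde{a}$. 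Without carrying out some version of that case analysis, the proposal asserts feasibility of the key allocation step without justification. So the high-level route is the same as the paper's, but the part that actually makes the lemma nontrivial is waved through.
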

\begin{proof} 
First assume that $X=(f,g)$ is a cell and set $h=g-f$.
The proof is by induction on $n$.  If $n=1$, then $X=(c,d)$ for some $c,d \in V^{\geq 0}$.  Then $\tau X_{(f,g)} \subseteq [0,d-c]$ and $\mu [0,d-c]=\mu X<\widetilde{a}$.

So suppose the lemma holds for $1,\dots ,n$, and let $X \subseteq [0,1]^{n+1}$. Let $$0=y_0 < y_1 <\dots <y_k =1$$ be such that $\sum_{i=1}^{k}\widetilde{y_i } \cdot \mu h^{-1} [y_{i-1},y_i ]<\widetilde{a}$.
%By the inductive assumption, we can find for each $i$ open cells $C_{i1}, \dots , C_{ik_i }$ as in the conclusion of the lemma, when $X$ is replaced by $h^{-1}[y_{i-1},y_i ]$.
%Let $i\in \{ 1,\dots ,k \}$ and $j\in \{ 1,\dots ,k_i \}$.  We now show how to conveniently cover $C_{ij} \times [0,y_{i}]$ by boxes (up to a set of dimension 
%$<n+1$).
\begin{trivlist}
\item[{\bf Case 1.\/}] There is no $c\in \ma^{\geq 0}$ with $\mu X<\widetilde{c}$.

\smallskip\noindent
 In this case $a>\ma$, and we fix $b\in V^{>\ma}$ so that $\mu X<\widetilde{b}<\widetilde{a}$.  It suffices to prove the conclusion of the lemma for each $$X':=X\cap (h^{-1}[y_{i-1},y_i] \times [0,1])$$ instead of $X$ and $\widetilde{c}:=\widetilde{b}+\widetilde{\frac{a-b}{k}}$ in place of $\widetilde{a}$.  
\begin{enumerate}
\item 
If $y_i \in \ma^{\geq 0}$, then let $\mathcal{D}$ be any decomposition of $R^{n+1}$ into cells partitioning $X'$.  For each open $D\in \mathcal{D}$ with $D\subseteq X'$ we have $\tau_{D}D \subseteq [0,1]^n \times [0,y_i ]$, which is a box of measure $\widetilde{y_i }$, and $l \cdot \widetilde{y_i} < \widetilde{c}$ for any non-negative integer $l$.

If $\mu h^{-1}[y_{i-1},y_i ]<\widetilde{d}$ for some $d\in \ma^{\geq 0}$, then we use the inductive assumption to find open cells $C_1 , \dots ,C_k$ so that $$\mu h^{-1}[y_{i-1},y_i ] =_0 C_1 \cup \dots \cup C_k ,$$ and for each $i \in \{  1,\dots ,k \}$ a family of boxes $\{ B_{ij} \colon j=1,\dots ,k_i \}$ covering $\tau_{C_i} C_i$ so that $\sum_{i=1}^{k_i} \mu B_{ij}<\widetilde{d}$.  Then the cells $X' \cap (C_i \times [0,1])$ and the families of boxes $$\{ B_{ij}\times [0,y_i ]\colon \; j=1,\dots ,k_i \},$$ where $i=1,\dots ,k$, are as in the conclusion of the lemma.

\item 
So suppose $y_i > \ma$, and there is no $d\in \ma^{\geq 0}$ with $\mu h^{-1}[y_{i-1},y_i ] < \widetilde{d}$.  Then $\mu h^{-1}[y_{i-1},y_i ] <\widetilde{\frac{c}{y_i }}$.   By the inductive assumption, we can find open cells $C_1 , \dots , C_k \subseteq [0,1]^n$ so that $$h^{-1}[y_{i-1},y_i ] =_0 C_1 \cup \dots \cup C_k ,$$ and for each $i \in \{1,\dots ,k \}$ a family of boxes $\{ B_{ij}\colon \; j=1,\dots ,k_i \}$ covering $\tau_{C_i} C_i$ with $$\sum_{i=1}^{k}\sum_{j=1}^{k_i} \mu B_{ij}<\widetilde{\frac{c}{y_i }}.$$  Then the cells $X' \cap (C_i \times [0,1])$ and the families of boxes $$\{ B_{ij} \times [0,y_i ] \colon j=1,\dots ,k_i \},$$ where $i=1,\dots ,k$, are as required.

%We cover $\tau C_{ij}$ (up to a set of dimension $<n$) by pairwise disjoint boxes 
%$B_{1}, \dots , B_{l}$ of total measure $< \widetilde{(\frac{b+\delta }{y_i })}$, where $\delta = \frac{a-b}{k_1 + \dots + k_k}$.  Then $$\mu (B_{1} \times [0,y_i ]) + \dots + \mu (B_l \times [0,y_i ]) < \widetilde{b+\delta}.$$
\end{enumerate}
%Then the collection of cells $\{  (f|_{C_{ij}},g|_{C_{ij}})  \}$, where $i=1,\dots ,k$ and $j=1,\dots ,k_i$, is as required.

\item[{\bf Case 2.\/}] There is $c\in \ma^{>0}$ with $\mu X <\widetilde{c}$.

\smallskip\noindent
In this case we may assume that $a\in \ma^{>0}$. We fix $b\in \ma^{>0}$ with $$\mu X < \widetilde{b} < \widetilde{a}.$$  It suffices to prove the conclusion of the lemma for each set $$X':= X\cap (h^{-1}[y_{i-1},y_i ] \times [0,1])$$ in place of $X$.
\begin{enumerate}
\item Suppose $y_i \in \ma^{\geq 0}$ and $\mu h^{-1}[y_{i-1},y_i ]<\widetilde{c}$, where $c\in \ma^{\geq 0}$.

If $\widetilde{y_i}<\widetilde{a}$, then we find open cells $C_1 ,\dots ,C_k$ so that $$h^{-1}[y_{i-1},y_i ]=_0 C_1 \cup \dots \cup C_k ,$$ and for each $i$ a family of boxes $\{  B_{ij}\colon \; j=1,\dots ,k_i \}$ covering $\tau_{C_i} C_i$ such that $\sum_{i=1}^{k} \sum_{j=1}^{k_i} \mu B_{ij}< \widetilde{c}$.  Then the cells $X'\cap (C_i \times [0,1 ])$ and the families of boxes $\{ B_{ij}\times [0,y_i ] \colon j=1, \dots ,k_i \}$ are as required.

%we cover $\tau C_{ij}$ (up to a set of dimension $<n$) by disjoint boxes $B_1 , \dots ,B_l$ of total measure $< \widetilde{c}$.  Then $$\mu (B_1 \times [0,y_i ]) + \dots + \mu (B_l \times [0,y_i ])<\widetilde{a}. $$

If $\widetilde{a}\leq \widetilde{y_i}$, then $\widetilde{b}<\widetilde{y_i }$, hence $z=\frac{b}{y_i } \in \ma^{>0}$.  Note that $\mu h^{-1}[y_{i-1},y_{i}]<\widetilde{z}$.
We proceed exactly as above, except that we require 
$$\sum_{i=1}^{k} \sum_{j=1}^{k_i} \mu B_{ij}< \widetilde{z}.$$

\item It is obvious how to handle the case when $y_i >\ma$.

\item
Suppose $y_i \in \ma^{\geq 0}$, and there is no $c\in \ma^{>0}$ so that $\mu h^{-1}[y_{i-1},y_i] <\widetilde{c}$.

Then $\widetilde{y_i } < \widetilde{a}$: If
$\widetilde{a}\leq \widetilde{y_i }$, then $\widetilde{b}<\widetilde{y_i}$, hence $\widetilde{b}<\widetilde{y_i }^{\frac{m}{n}}<\widetilde{y_i} \mbox{ for some } \frac{m}{n}\in \mathbb{Q}^{>1}.$  But $y_{i}^{\frac{m-n}{n}} \in \ma^{>0}$ and $\widetilde{y_i}\cdot \widetilde{y_i }^{\frac{m-n}{n}} = \widetilde{y_i}^{\frac{m}{n}}$, a contradiction with $\widetilde{y_i} \cdot \widetilde{\epsilon } < \widetilde{b}$ for all $\epsilon \in \ma^{>0}$.

It is now obvious how to handle this case as well.

% then we cover $\tau C_{ij}$ (up to a set of dimension $<n$) by boxes $B_1 , \dots ,B_l$ of total measure $< \widetilde{c}$ for some $c\in V^{>0}$ with $\mu C_{ij}<\widetilde{c}$.  Then $$ \mu (B_1 \times [0,y_i ])+\dots +\mu (B_l \times [0,y_i ])< \widetilde{a}.$$

%\item Suppose $y_i >\ma$, and $\mu C_{ij} <\widetilde{c}$, where $c\in \ma^{>0}$.

%In this case $\mu C_{ij} <\widetilde{a}$, so we cover $\tau C_{ij}$ (up to a set of dimension $<n$) by pairwise disjoint boxes $B_1 , \dots , B_l$ with total measure $<\widetilde{a}$, and then $$\mu (B_1 \times [0,y_i ] ) + \dots + \mu (B_l \times [0, y_i]) < \widetilde{a}.$$

\end{enumerate}
%Then the collection of cells $\{  (f|_{C_{ij}},g|_{C_{ij}})  \}$, where $i=1,\dots ,k$ and $j=1,\dots ,k_i$, is as required.

\end{trivlist}
We established the lemma for $X\subseteq [0,1]^n$ a cell.
Now suppose that $X\subseteq [0,1]^{n}$ is a definable set.  Let $\mathcal{D}$ be a decomposition of $R^n$ into cells partitioning $X$, and let $X=_0 D_1 \cup \dots \cup D_m$, where each $D_i \in \mathcal{D}$ is open.

The case when $a\in \ma^{\geq 0}$ follows immediately from Case 2 above.  So suppose there is no $c\in \ma^{>0}$ so that $\mu X <\widetilde{c}$.  Let $b\in V^{>0}$ be such that $\mu X<\widetilde{b}<\widetilde{a}$.  By Case 1, each $\tau D_i$ can be covered by finitely many boxes $B_{ij}$ of total measure $<\mu D_i + \widetilde{\frac{a-b}{m}}$.  Then the sum of the measures of all the boxes is $<\widetilde{a}$.
\end{proof}
%In the proof of the next theorem we use the following convention.  Let $D=(f,g)\subseteq R^n$ be a cell.  Then $\alpha_D \colon (0,g-f) \to (f,g)$ is defined by $$\alpha_D (x,x_n ) = (x , x_{n} + f(x)  ),$$
%where $x \in p^{n}_{n-1}D$ and $0< x_n < (g-f)(x)$.

\begin{theorem}\label{inv}
Let $X,Y \subseteq [0,1]^n$ be definable and isomorphic.  Then $\mu X = \mu Y$.
\end{theorem}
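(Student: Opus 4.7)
The plan is to induct on $n$ and, in the inductive step, split into two cases depending on whether $\interior{\pi X}$ is empty. The base case $n=1$ is essentially classical: $\phi\colon U\to V$ with $|\phi'(x)|=1$ on $X\cap U$ forces $\phi$ to act piecewise as an isometry on the finitely many 1-cells making up $X$, so the 1-dimensional $\mu$ is preserved. Alternatively the base case subsumes into Case A below.

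Case A is when $\interior{\pi X}\neq\emptyset$. Then $\mu X\in\mathbb{R}^{>0}$ and equals the Lebesgue measure of $\pi X$ (this is spelled out in the introduction, and it is consistent with Claim~1 of Case~2 in the proof of Theorem~\ref{main}). By the $\sim$-invariance of the Berarducci–Otero measure established in \cite{real}, $\pi Y$ has the same Lebesgue measure as $\pi X$; in particular $\interior{\pi Y}\neq\emptyset$, so $\mu Y$ is likewise the Lebesgue measure of $\pi Y$, and the equality follows.

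Case B is when $\interior{\pi X}=\emptyset$, and then necessarily $\interior{\pi Y}=\emptyset$ by the same $\sim$-invariance. Here we suppose for contradiction that (say) $\mu Y<\widetilde{a}<\mu X$ for some $a\in V^{>0}$. Apply Lemma~\ref{box} to $X$ to produce an open cell $C\subseteq X$ and a box $B\subseteq_0\tau_C C$ with $\mu B>\widetilde{a}$. Apply Lemma~\ref{boxlemma} to $Y$ to produce open cells $C_1',\dots,C_k'$ with $Y=_0 C_1'\dot\cup\cdots\dot\cup C_k'$ together with families of boxes $\{B_{ij}'\}$ covering each $\tau_{C_i'}C_i'$ and satisfying $\sum_{i,j}\mu B_{ij}'<\widetilde{a}$. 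Because $\tau_C$ and each $\tau_{C_i'}$ are themselves isomorphisms in the sense of Definition~\ref{iso} (and a composition of isomorphisms is an isomorphism), the map $\tau_{C_i'}\circ\phi\circ\tau_C^{-1}$, defined piecewise after a suitable cell decomposition, realizes an isomorphism between (a subset of) $B$ and a subset of $\bigsqcup_{i,j}B_{ij}'$.

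The main obstacle is the final step: showing that this isomorphism forces $\mu B\leq\sum_{i,j}\mu B_{ij}'$, which would contradict $\mu B>\widetilde{a}>\sum_{i,j}\mu B_{ij}'$. This is a \emph{box-to-boxes invariance} statement in the infinitesimal regime; I would prove it by a Fubini-style slicing along the last coordinate, using $|J\phi|=1$ to control how the one-dimensional cross-sectional measures transform, and then invoking the inductive hypothesis for the $n$-dimensional slices. The base case handles the innermost slice, and the multiplicative structure of $\mu$ on boxes, together with the definitions of the lower and upper sums from Definition~\ref{defmeasure}, lets this slicing integrate back up to the required inequality.
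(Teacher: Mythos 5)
Your Case~A is fine and matches the paper: when $\interior{\pi X}\neq\emptyset$ the result is pulled directly from the $\sim$-invariance of the Berarducci--Otero measure in \cite{real}. Your preliminary reduction in Case~B is also in the same spirit as the paper's: apply Lemma~\ref{box} on the large side to get a single box $B$ with $\mu B>\widetilde{a}$, apply Lemma~\ref{boxlemma} on the small side to get a covering family of boxes with $\sum\mu B_{ij}'<\widetilde{a}$, and transport via the $\tau$-maps (which are indeed isomorphisms). So far so good.

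The gap is exactly where you flag it: the ``box-to-boxes invariance'' inequality $\mu B\leq\sum_{i,j}\mu B_{ij}'$ is the whole content of the theorem in the infinitesimal regime, and the Fubini-style slicing you sketch does not obviously close it. Slicing along the last coordinate and invoking $|J\phi|=1$ is a change-of-variables argument, and the paper never develops any $\widetilde{V}$-valued integration or change-of-variables machinery that such an argument would need; moreover a $C^1$-diffeomorphism with unit Jacobian can shear slices in ways that do not preserve $(n-1)$-dimensional measure fiberwise, so an induction on $n$ via cross-sections cannot be applied directly. Trying to appeal to ``invariance on the pieces'' to add up the slice contributions is circular, since invariance is what is being proved.

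The paper's actual escape is a rescaling trick that bypasses slicing entirely. After reducing to a single open cell $D\subseteq X$ with $\mu D=\mu X$ whose $\tau_C\circ\phi$-image lies in a single box $B$ of measure $<\widetilde{a}$, and a box $P\subseteq\tau_D D$ of measure $>\widetilde{a}$, one composes with two diagonal affine maps: $\theta$ scaling $[0,1]^n$ onto $\tau_P P$, and $\hat\theta$ scaling $\tau_B B$ with $\det\hat\theta=1/\det\theta$ chosen so that $\hat\theta(\tau_B B)\subseteq V^n$ and $\pi\hat\theta(\tau_B B)$ has empty interior (possible precisely because $\mu B<\mu P$). The composite $\hat\theta\circ\tau_B\circ\tau_C\circ\phi\circ\tau_D^{-1}\circ\tau_P^{-1}\circ\theta$ is then an isomorphism from $[0,1]^n$ onto a set whose standard part has empty interior, contradicting Case~A applied to $[0,1]^n$. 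In other words, the infinitesimal case is \emph{reduced} to the standard case rather than argued independently. You should replace the Fubini-slicing step with this rescaling argument, or produce the missing integration machinery; as written, the crucial inequality is asserted but not established.
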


\begin{proof}
Let $\phi$ be an isomorphism $X\to Y$. 
It suffices to show that $\mu X \leq \mu Y$, since $\phi^{-1}$ is an isomorphism $Y\to X$.  If $\interior{\pi X}\not=\emptyset$, then the theorem is obvious from the proof of Theorem 6.5, p. 194 in \cite{real}.  

So suppose $\interior{\pi X}=\emptyset$.  Assume towards a contradiction that $\mu Y < \mu X$, and let $a\in \ma^{>0}$ be such that $\mu Y <\widetilde{a} <\mu X$.  By Lemma \ref{boxlemma}, we can find open cells $C_1 , \dots ,C_k \subseteq [0,1]^n$ so that $$Y=_0 C_1 \dot\cup C_2 \dot\cup \dots \dot\cup C_k ,$$ $\phi$ is defined on each $C_i$, and so that for each $i$, we can find a family of boxes $\{ B_{ij}: j=1,\dots ,k_i \}$ with $\tau_{C_i} C_i \subseteq_0 \bigcup_{j=1}^{k_i} B_{ij}$ and $\sum_{i=1}^{k} \sum_{j=1}^{k_i} \mu B_{ij} <\widetilde{a}$.
Then $$X=_0 \phi^{-1}C_1 \dot\cup \dots \dot\cup \phi^{-1}C_k .$$  We set $C:=C_l$, where $l \in \{ 1,\dots ,k  \}$ is such that $\mu X = \mu \phi^{-1}(C_l )$, and we replace $X$ by $\phi^{-1}(C)$ and $\phi$ by $\phi |_{\phi^{-1}C}$.  Then $\tau_C \circ \phi$ is an isomorphism  $X \to \tau_C C$. 
Let $\mathcal{D}$ be a decomposition of $R^n$ into cells partitioning each $$X\cap \phi^{-1} (\tau^{-1}_C (B_{ij} \cap \tau_C C)).$$
%, where $$\mathcal{B} = \{ B_{ij}:\; i= 1,\dots ,k\mbox{ and } j= 1,\dots ,k_i   \}.$$  
Then $$X=_0 D_1 \dot\cup \dots \dot\cup D_m ,$$ where each $D_i$ is an open cell from $\mathcal{D}$.  Let $D:= D_l$ for $l\in \{ 1,\dots ,m \}$ so that $\mu X=\mu D_l$, and let $B:=B_{ij}$ so that $\tau_C \circ \phi (D)\subseteq B_{ij}$.  By Lemma \ref{box}, we can find a box $P\subseteq \tau_D (D)$ with $\mu P>\widetilde{a}$.
Then $$\tau_P P=[0,\epsilon_1 ] \times [0,\epsilon_2 ] \times \dots \times [0,\epsilon_n ],$$ where each $\epsilon_i \in V^{>0}$ and $\mu P = \Pi_{i=1}^{n} \widetilde{ \epsilon_i }$.
Let $\theta \colon [0,1]^n \to R^n$ be given by $\theta (x)=(\epsilon_1 x_1 ,\dots ,\epsilon_n x_n )$.  Then $\theta ([0,1]^n )=\tau_P P$.  

We define another map $\hat{\theta}\colon \tau_B B \to R^n$ by $\hat{\theta}(x)=(\delta_1 x_1 , \dots , \delta_n x_n )$, where $\delta_1 , \dots ,\delta_n \in R^{>0}$ are chosen in such a way that $\det (\hat{\theta})=\frac{1}{\det{\theta}}$, and $\hat{\theta}(\tau_B B) \subseteq V^n$ (this is possible since $\mu B < \mu P$).  Then $\pi \hat{\theta}(
\tau_B B)$ has empty interior.  
However, the map
$$\hat{\theta } \circ \tau_{B} \circ \tau_{C} \circ \phi \circ \tau^{-1}_{D} \circ \tau^{-1}_P \circ \theta$$ is an isomorphism $[0,1]^n \to \hat{\theta}(\tau_B B)$, a contradiction with the theorem being true in the case when $\interior{\pi X}\not=\emptyset$.
\end{proof}

\begin{definition}\label{defmeasureinV}
For a definable set $X\subseteq V^n$ we set $$\mu X := \widetilde{\frac{1}{\det{A}}}\cdot \mu (TX),$$ where $$T: R^n \to R^n : x\mapsto Ax+b$$ is an affine map with affine transformation matrix $A=(a_{ij})$ such that $a_{ij}=\lambda \in V^{>\ma}$ whenever $i=j$, and $a_{ij}=0$ whenever $i\not= j$, $b\in V^{n}$, and $AX \subseteq [0,1]^n$.
\end{definition}
The next Lemma shows that $\mu X$ is well-defined on $SB[n]$.

\begin{lemma}
Let $X\subseteq V^n$ be definable, and let $$T:R^n \to R^n : x\mapsto Ax+b \mbox{ and } T':R^n \to R^n : x \mapsto A'x+b'$$ be affine transformations for $X$ as in Definition \ref{defmeasureinV}.  Then $$  \widetilde{\frac{1}{\det{A}}} \cdot \mu (TX)=  \widetilde{\frac{1}{\det{A'}}} \cdot \mu (T'X) .$$
\end{lemma}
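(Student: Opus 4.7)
The goal is to show the value $\widetilde{1/\det A}\cdot \mu(TX)$ does not depend on the choice of scalar-diagonal affine map $T$ landing $X$ in $[0,1]^n$. My plan is to compare the two values via the intermediate map $S:=T'\circ T^{-1}$, which is again a scalar-diagonal affine map: setting $\alpha:=\lambda'/\lambda\in V^{>\ma}$ and $c:=b'-\alpha b\in V^n$, one has $S(y)=\alpha y+c$ and $S(TX)=T'X$. The required equality $\widetilde{1/\lambda^n}\cdot\mu(TX)=\widetilde{1/(\lambda')^n}\cdot\mu(T'X)$ will then reduce, after multiplying through by $\widetilde{(\lambda')^n}$ and using Lemma \ref{multcong}, to proving the following key transformation rule:
\emph{if $Y\subseteq [0,1]^n$ is definable, $\alpha\in V^{>\ma}$, $c\in V^n$, and $\alpha Y+c\subseteq[0,1]^n$, then $\mu(\alpha Y+c)=\widetilde{\alpha^n}\cdot\mu Y$.}

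For the translation part of this rule, the map $y\mapsto y+c$ is a definable $C^1$-diffeomorphism $R^n\to R^n$ with constant Jacobian determinant $1$, so by Definition \ref{iso} it is an isomorphism between $Y$ and $Y+c$ whenever both lie in $V^n$, and Theorem \ref{inv} gives $\mu(Y+c)=\mu Y$ as long as both sets sit in $[0,1]^n$. It therefore suffices to prove the scaling statement $\mu(\alpha Y)=\widetilde{\alpha^n}\cdot\mu Y$. Note that scaling is \emph{not} an isomorphism in the sense of Definition \ref{iso}, so this case cannot be read off from Theorem \ref{inv} directly and requires going back into the construction of $\mu$.

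For the scaling statement I would induct on $n$. For $n=1$, a decomposition of $Y$ consists of points (measure $0$) and intervals $(a,b)$, each sent by scaling to $(\alpha a,\alpha b)$ with measure $\widetilde{\alpha(b-a)}=\widetilde{\alpha}\cdot\widetilde{b-a}$; additivity (Theorem \ref{main}) finishes the base case. For the inductive step, fix a decomposition $\mathcal{D}$ partitioning $Y$; since scaling preserves cell structure, $\alpha\mathcal{D}$ partitions $\alpha Y$, and it suffices to handle an open cell $C=(f,g)\subseteq[0,1]^{n+1}$. Let $h=g-f$; the analogous function $h_\alpha$ for $\alpha C$ is $h_\alpha(y)=\alpha\cdot h(y/\alpha)$ on $\alpha\cdot\domain{h}$. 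For any partition $0=z_0<\dots<z_k=1$ of $[0,1]$, the points $w_i=\alpha z_i$ (together with the padding $w_{k+1}=1$ producing an empty preimage when $\alpha<1$) give
\[
h_\alpha^{-1}[w_{i-1},w_i]=\alpha\cdot h^{-1}[z_{i-1},z_i],
\]
and the inductive hypothesis in dimension $n$ yields $\overline{\mu}(h_\alpha^{-1}[w_{i-1},w_i])=\widetilde{\alpha^n}\cdot\overline{\mu}(h^{-1}[z_{i-1},z_i])$. Multiplying by $\widetilde{w_i}=\widetilde{\alpha}\cdot\widetilde{z_i}$ and summing (using Lemmas \ref{addcong} and \ref{multcong} plus the distributivity given by Remark \ref{rem}), the upper sum for $h_\alpha$ on this partition equals $\widetilde{\alpha^{n+1}}$ times the upper sum for $h$ on $\{z_0,\dots,z_k\}$. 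The same computation works for lower sums, and taking inf/sup gives $\mu(\alpha C)=\widetilde{\alpha^{n+1}}\cdot\mu C$.

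The main obstacle is a partition-matching subtlety in the last step: upper and lower sums in Definition \ref{defmeasure} are taken over \emph{all} partitions of $[0,1]$, while my comparison only provides sums coming from partitions that contain the point $\alpha$ (so that $w_i=\alpha z_i$ covers $[0,\alpha]$ cleanly). I would resolve this by checking that inserting an extra point $z^*$ into any partition only decreases the upper sum and only increases the lower sum. This is the standard refinement inequality, and in the present setting it follows from the finite-additivity of $\mu$ on lower-dimensional slices established in Theorem \ref{main}, together with the cancellation $(\widetilde{z^*}-\widetilde{z_j})\cdot\overline{\mu}(h^{-1}[z_{j-1},z^*])\leq 0$. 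Once refinement is known, the infimum over partitions containing $\alpha$ agrees with the infimum over all partitions, completing the scaling invariance and hence the lemma.
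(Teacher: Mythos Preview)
Your route is different from the paper's, and it has a genuine gap. The paper does not attempt to prove a scaling law $\mu(\alpha Y)=\widetilde{\alpha^n}\,\mu Y$ by induction through the upper/lower sums. It instead splits on whether $\interior{\pi X}$ is empty. When it is nonempty, $\mu(TX)$ and $\mu(T'X)$ are the Lebesgue measures of $\pi(TX)$ and $\pi(T'X)$, and ordinary real scaling gives the result. When it is empty, Lemma~\ref{smalllemma} forces $\mu(TX)\notin\mathbb{R}^{>0}$, and since $\frac{1}{\det A}\in V^{>\ma}$ one gets $\widetilde{\frac{1}{\det A}}\cdot\mu(TX)=\mu(TX)$; so only $\mu(TX)=\mu(T'X)$ is needed. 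That is obtained from Lemma~\ref{box}: for $a\in\ma^{>0}$ with $\widetilde a<\mu Y$ there is a cell $C\subseteq Y$ and a box $B\subseteq\tau_C C$ with $\widetilde a<\mu B$; then $SC\subseteq SY$ is again a cell, $SB\subseteq\tau_{SC}(SC)$ is a box, and $\widetilde a<\mu(SB)$ because scaling a box by $\lambda\in V^{>\ma}$ does not move the $\sim$-class of the product of its side lengths.

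The gap in your induction is the claim that the padding interval $[\alpha,1]$ gives an empty preimage. It does not: $h_\alpha$ takes values in $(0,\alpha]$, so $h_\alpha^{-1}[\alpha,1]=h_\alpha^{-1}(\alpha)=\alpha\cdot h^{-1}(1)$, and this has full $n$-dimensional measure whenever $h\equiv 1$, for instance when $C=(0,1)^{n+1}$. In that example every partition of $[0,1]$ \emph{containing} the point $\alpha$ yields an upper sum for $h_\alpha$ of at least $\widetilde{2\alpha^{n+1}}$, strictly larger than the true value $\widetilde{\alpha^{n+1}}$; the correct infimum is only witnessed by partitions that straddle $\alpha$ without containing it. So restricting to partitions of the form $\{0,\alpha z_1,\dots,\alpha,1\}$ is not harmless, and the refinement inequality you invoke to repair this, written as $(\widetilde{z^*}-\widetilde{z_j})\cdot\overline{\mu}(\dots)\le 0$, is not available in $\widetilde V$, which is only a semiring with no subtraction. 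The argument can be rescued by first separating off the case $\interior{\pi Y}\ne\emptyset$ and handling it via Lebesgue measure, but at that point you are essentially reproducing the paper's case split.
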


\begin{proof}
Note that $\interior{\pi X}= \emptyset$ iff $\interior{\pi (TX)}=\emptyset$, and the lemma holds whenever $\interior{\pi X}\not=\emptyset$, since it holds in $\mathbb{R}_0$.  So we may assume $\interior{\pi X}=\emptyset$, in which case
$$\widetilde{\frac{1}{\det{A}}}\cdot \mu TX=\mu TX\mbox{ and }\widetilde{\frac{1}{\det{A'}}}\cdot \mu T'X=\mu T'X,$$
so it suffices to show that $\mu (TX) = \mu (T'X)$.  We set $Y:=TX$ and $S:=T'\circ T^{-1}$.  Then $Y,SY \subseteq [0,1]^n$, and $S$ is an affine transformation with diagonal affine transformation matrix so that each entry on the diagonal is a fixed $\lambda \in V^{>\ma}$.  
%It is immediate from its definition that $\mu$ considered as a measure on the definable subsets of $[0,1]^n$ is invariant under translations.  So we may as well assume that $S$ is a linear transformation.

To see that $\mu Y \leq \mu SY$, let $a\in \ma^{>0}$ be such that $\widetilde{a}<\mu Y$.  We can find a cell $C\subseteq Y$ and a box $B\subseteq \tau_C C$ with $\widetilde{a} < \mu B$.  But then $SC\subseteq SY$ is a also a cell, and $SB\subseteq \tau_{SC} SC$ is a box such that $\widetilde{a}<\mu SB$.

The inequality $\mu SY \leq \mu Y$ follows by a similar argument when considering $S^{-1}\colon SY \to Y$ instead of $S$.
\end{proof}
%Note that $\mu$ considered as a measure on the definable subsets of $V^n$ is invariant under translations.

\begin{corollary}\label{invariance}
Let $X,Y \subseteq V^n$ be definable and let $\phi \colon X\to Y$ be an isomorphism.  Then $\mu X=\mu Y$.
\end{corollary}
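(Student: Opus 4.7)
The plan is to reduce Corollary \ref{invariance} to Theorem \ref{inv} by pulling everything back into $[0,1]^n$ along a single affine rescaling. First I would observe that since $X, Y \subseteq V^n$ are $R$-definable, o-minimality forces each coordinate projection to be a finite union of intervals whose supremum lies in $V$, hence $X \cup Y \subseteq [-N, N]^n$ for some integer $N$. Then I would choose a positive rational $\lambda$ small enough that $\lambda (2N+1) < 1$; since $\mathbb{Q}^{>0} \subseteq V^{>\ma}$, the diagonal affine map $T \colon R^n \to R^n$ given by $T(x) = \lambda x + (1/2, \ldots, 1/2)$ simultaneously satisfies the hypotheses of Definition \ref{defmeasureinV} for both $X$ and $Y$, and sends them into $[0,1]^n$.

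The core step is to transfer the isomorphism $\phi \colon X \to Y$ to an isomorphism $\psi := T \circ \phi \circ T^{-1} \colon TX \to TY$ in the sense of Definition \ref{iso}. Since $T$ is affine with constant Jacobian determinant $\lambda^n$, the chain rule gives $|J\psi (y)| = |\lambda|^n \cdot |J\phi (T^{-1}y)| \cdot |\lambda|^{-n} = |J\phi (T^{-1}y)|$, which equals $1$ off a set of dimension $<n$ because the same holds for $\phi$. The remaining conditions in Definition \ref{iso} — namely that $\psi$ is a definable $C^1$-diffeomorphism between open sets with $TX$ contained in its domain modulo a set of lower dimension, and that $\psi (TX) =_0 TY$ — transfer directly from the corresponding properties of $\phi$, using that $T$ is a global affine $C^1$-diffeomorphism of $R^n$.

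Given this, Theorem \ref{inv} yields $\mu (TX) = \mu (TY)$, and Definition \ref{defmeasureinV} (applied with the same $T$ on both sides, as permitted by the well-definedness lemma immediately preceding the corollary) then gives
\[
\mu X \; = \; \widetilde{1/\lambda^n} \cdot \mu (TX) \; = \; \widetilde{1/\lambda^n} \cdot \mu (TY) \; = \; \mu Y,
\]
completing the proof. There is no serious obstacle here; the only point that requires a moment's thought is recognizing that one can rescale $X$ and $Y$ by a single $T$ of the form prescribed in Definition \ref{defmeasureinV}, so that the conjugated diffeomorphism $\psi$ still has Jacobian determinant of absolute value $1$ and the invariance of $\mu$ on $[0,1]^n$ can be invoked directly.
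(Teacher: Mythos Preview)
Your proposal is correct and takes essentially the same approach as the paper: conjugate $\phi$ by a single diagonal affine rescaling $T$ with $\lambda \in V^{>\ma}$ that sends both $X$ and $Y$ into $[0,1]^n$, observe that the Jacobian factors cancel so $T\circ\phi\circ T^{-1}$ is again an isomorphism, apply Theorem~\ref{inv}, and read off $\mu X=\mu Y$ from Definition~\ref{defmeasureinV}. The paper separates the translation step (reducing to $X,Y\subseteq[0,m]^n$) from the scaling step ($\theta(x)=\tfrac{1}{m}x$), whereas you fold both into one map $T$, but this is purely cosmetic.
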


\begin{proof}
%Let $S$ and $T$ be affine transformations for $X$ and $Y$ respectively.
Since $\mu$ is invariant under translations, we may assume that $X,Y\subseteq [0,m]^n$.  Let $\theta \colon R^n \to R^n$ be given by $\theta (x) = (\frac{1}{m}x_1 , \dots ,\frac{1}{m}x_n )$.  Then $$\theta |_{Y} \circ \phi \circ \theta^{-1}|_{\theta X} : \theta X \to \theta Y$$ is an isomorphism between subsets of $[0,1]^n$, hence by Theorem \ref{inv}, $\mu \theta X = \mu \theta Y$, and so $\mu X = \mu Y$ by the definition of $\mu$.
\end{proof}

\begin{lemma}\label{products}
Let $X\subseteq [0,1]^m$ and $Y \subseteq [0,1]^n$ be definable.  Then $\mu (X\times Y) = \mu Y \cdot \mu X$. 
\end{lemma}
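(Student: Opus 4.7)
The plan is to prove $\mu(X\times Y)=\mu Y\cdot\mu X$ by induction on $n$, uniformly in $m\geq 1$. For the reduction, I decompose $X=C_{1}\dot\cup\cdots\dot\cup C_{k}$ and $Y=E_{1}\dot\cup\cdots\dot\cup E_{l}$ into cells, use additivity of $\mu$ from Theorem~\ref{main}, and apply distributivity in the semiring $\widetilde{V}$ to write both sides as sums of terms $\mu(C_{i}\times E_{j})$ and $\mu C_{i}\cdot\mu E_{j}$ respectively. Cells that are not open in their ambient space carry measure $0$, and the corresponding products $C_{i}\times E_{j}$ inherit an $i_{k}=0$ entry in their cell type, hence also carry measure $0$; so it suffices to prove $\mu(C\times E)=\mu C\cdot\mu E$ for open cells $C\subseteq[0,1]^{m}$ and $E\subseteq[0,1]^{n}$.

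The base case $n=1$ is a direct computation. Here $E$ is an open interval $(c,d)\subseteq[0,1]$ with constant endpoints, so $C\times E$ is an open cell in $R^{m+1}$ whose height function $h$ is identically $d-c$ on the base $C$. For any partition $0=z_{0}<z_{1}<\cdots<z_{k}=1$, the preimage $h^{-1}[z_{i-1},z_{i}]$ equals $C$ for the unique $i$ with $d-c\in[z_{i-1},z_{i}]$ and is empty otherwise, so the partition sums collapse to a single non-zero term $\widetilde{z_{i-1}}\cdot\mu C$ (lower) or $\widetilde{z_{i}}\cdot\mu C$ (upper). Choosing partitions of the form $\{0,z,1\}$ with $z\in V\cap[0,d-c]$, the lower sums trace out $\{\widetilde{z}\cdot\mu C:z\leq d-c\}$; a density argument in $V^{\geq 0}/\sim$, split into the cases $d-c>\ma$ and $d-c\in\ma^{>0}$, shows that the supremum equals $\widetilde{d-c}\cdot\mu C$, and dually for the upper sums.

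For the inductive step, assume the lemma for second factors of dimension $<n$, and let $E=(f_{E},g_{E})\subseteq[0,1]^{n}$ be an open cell with height $h_{E}=g_{E}-f_{E}$ on $p^{n}_{n-1}E\subseteq[0,1]^{n-1}$. The crucial observation is that the cell $C\times E\subseteq R^{m+n}$ has a height function $h$ that depends only on the middle coordinates $x_{m+1},\ldots,x_{m+n-1}$, and is equal to $h_{E}$ there. Hence $h^{-1}[z_{i-1},z_{i}]=C\times h_{E}^{-1}[z_{i-1},z_{i}]$ with $h_{E}^{-1}[z_{i-1},z_{i}]\subseteq[0,1]^{n-1}$, so the inductive hypothesis gives
\[
\mu(h^{-1}[z_{i-1},z_{i}])=\mu C\cdot\mu(h_{E}^{-1}[z_{i-1},z_{i}]).
\]
Substituting into the lower partition sum for $\underline{\mu}(C\times E)$, pulling $\mu C$ out of the finite sum by distributivity, and then through the supremum over partitions, yields $\underline{\mu}(C\times E)=\mu C\cdot\underline{\mu}E$; the analogous computation for upper sums gives $\overline{\mu}(C\times E)=\mu C\cdot\overline{\mu}E$, and Theorem~\ref{main} concludes.

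The main delicacy I expect lies in the final pull-through, namely verifying that multiplication by a fixed element of $\widetilde{V}$ commutes with the supremum over partitions. Since the supremum of a family of Dedekind cuts is their union, this reduces to the identity $\mu C\cdot\bigcup_{P}S_{P}=\bigcup_{P}(\mu C\cdot S_{P})$, which I check by unpacking Definition~\ref{operations}(c): the product of cuts is defined through products of non-negative representatives, a description that manifestly commutes with unions. Once this point and the density argument in the base case are secured, the rest is routine bookkeeping with cell decompositions and the recursive definition of $\mu$.
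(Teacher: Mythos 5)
Your proof is correct and takes a genuinely different route from the paper's. The paper's argument is geometric: after dispatching the case where both $\pi X$ and $\pi Y$ have non-empty interior via Lebesgue measure, it uses Lemma~\ref{box} to extract a box $B\subseteq\tau_C C$ of nearly full measure inside a cell of $X\times Y$, factors $B=pB\times qB$, and transports $\mu pB$ and $\mu qB$ back into $X$ and $Y$ using the isomorphism invariance of $\mu$ under the $\tau$ maps; the reverse inequality is obtained dually by multiplying boxes inside $X$ and $Y$. Your proof instead works directly from the recursive Definition~\ref{defmeasure}: the structural observation that the height function of the cell $C\times E$ factors through the $E$-coordinates, so that $h^{-1}[z_{i-1},z_i]=C\times h_E^{-1}[z_{i-1},z_i]$, lets you pull $\mu C$ through the partition sums and then through the supremum (the union of cuts), reducing to the inductive hypothesis in one fewer $Y$-variable. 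Your route is more elementary and more uniform: it needs neither the box lemmas nor isomorphism invariance, nor the case split on interiors. One small simplification you can make: the claimed ``analogous computation for upper sums'' is not needed and would in fact require a separate argument, since products of cuts need not commute with \emph{infima} the way they commute with suprema; but since Theorem~\ref{main} already guarantees $\underline{\mu}(C\times E)=\overline{\mu}(C\times E)$, the lower-sum computation alone already yields $\mu(C\times E)=\mu C\cdot\mu E$.
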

\begin{proof}
If $\interior{X}=\emptyset$ or $\interior{Y}=\emptyset$, then the lemma holds trivially, so assume that $\interior{X}$ and $\interior{Y}$ are nonempty.

Note that in the case when $\mu X, \mu Y \in \mathbb{R}^{>0}$, the lemma holds, since then $\mu X$ and $\mu Y$ are just the Lebesgue measures of $\pi X$ and $\pi Y$ respectively.  So suppose $\mu X\not\in \mathbb{R}^{>0}$ or $\mu Y \not\in \mathbb{R}^{>0}$ (and hence $\mu (X\times Y) \not\in \mathbb{R}^{>0}$).

Let $\mathcal{C}$ be a decomposition of $R^{m+n}$ into cells that partitions $X\times Y$.  To see that $\mu (X \times Y) \leq \mu X\cdot \mu Y$, let $C \in \mathcal{C}$ be such that $C\subseteq X\times Y$ and $\mu C=\mu (X\times Y)$.  Let further $a\in V^{>0}$ be so that $\widetilde{a}<\mu (X\times Y)$.  By Lemma \ref{box}, we can find a box $B\subseteq \tau_{C} C$ with $\widetilde{a}<\mu B$.  Then $B=pB\times qB$, where $p\colon R^{m+n}\to R^m$ denotes the projection onto the first $m$ coordinates and $q\colon R^{m+n}\to R^{n}$ is the projection onto the last $n$ coordinates.  By Lemma \ref{multcong}, $\mu B=\mu pB \cdot \mu qB$.  Now $\tau_{pC}^{-1}pB\subseteq X$ and $\mu \tau_{pC}^{-1}pB=\mu pB$, since $\tau_{pC}^{-1}|_{pB}$ is an isomorphism $pB \to \tau^{-1}_{pC}(pB)$.  Hence $\mu pB \leq \mu X$.  We now define a map $\hat{\tau}$ on $qB$.  Suppose $p^{m+n}_{m+k}C=(f_{k},g_{k})$ for $1\leq k\leq n$.  Fix $c\in C$, and let $\hat{f}_{k}= f_{k}(p^{m+n}_{m+k-1}c)$.  Set $\hat{\tau}:=(\tau_1 , \dots ,\tau_{n})$, where $\tau_k (x) = x_k + \hat{f}_k$ for $x\in qB$.  Then $\hat{\tau}qB\subseteq Y$ and, since $\hat{\tau}$ is an isomorphism, $\mu qB = \mu \hat{\tau}qB$, so $\mu qB \leq \mu Y$.  It follows that $\widetilde{a}<\mu X \cdot \mu Y$, hence $\mu (X\times Y) \leq \mu X \cdot \mu Y$.

To see that $\mu X \cdot \mu Y \leq \mu (X\times Y)$, let $a\in V^{>0}$ be such that $\widetilde{a}<\mu X \cdot \mu Y$.  Then we can find $b,c \in V^{>0}$ with $\widetilde{a} \leq \widetilde{b}\cdot \widetilde{c}$ and $\widetilde{b}<\mu X$ and $\widetilde{c}<\mu Y$.  
First, suppose $\mu X\not\in \mathbb{R}^{>0}$ and $\mu Y\not\in \mathbb{R}^{>0}$.  Then we can find cells $C\subseteq X$ and $D\subseteq Y$ such that $\mu C=\mu X$ and $\mu D = \mu Y$.  By Lemma \ref{box},  there are boxes $B\subseteq \tau_{C}C$ and $P\subseteq \tau_{D}D$ so that $\widetilde{b}<\mu B$ and $\widetilde{c}<\mu P$.  Note that $C\times D \subseteq X\times Y$ is a cell.  We have $P\times Q\subseteq \tau_{C\times D}(C\times D)$, and hence $\widetilde{a} < \mu (X\times Y)$ because $\tau_{C\times D}$ is an isomorphism.

Finally, suppose that $\mu X\not\in \mathbb{R}^{>0}$ and $\mu Y \in \mathbb{R}^{>0}$ (the case when $\mu X \in \mathbb{R}^{>0}$ and $\mu Y\not\in \mathbb{R}^{>0}$ is similar).  Proceed as in the previous case, but let $D\subseteq Y$ be any cell so that $\interior{\pi D}\not=\emptyset$, and let $P\subseteq \tau_D D$ be a box so that $\interior{\pi P}\not=\emptyset$.
%Suppose that $\mu X \not\in \mathbb{R}^{>0}$ (the case when $Y\not\in \mathbb{R}^{>0}$ is similar).  Let $\mathcal{C}$ be a decomposition of $R^m$ into cells partitioning $X$, and let $C\in \mathcal{C}$ be such that $C\subseteq X$ and $\mu C=\mu X$.  Let $B\subseteq \tau_C C$ be a box with $\widetilde{b}<\mu B$, and let $P\subseteq Y$ be a box such that $\pi P$ has nonempty interior.  Then $B\times P \subseteq X\times Y$ and $\mu (B\times P) = \mu B$, hence $\widetilde{a}<\mu (X\times Y)$.
\end{proof}

We now have the following theorem:
\begin{theorem}\label{maintheorem}
For each $n$,
there is a map $\mu_n \colon SB[n] \to \widetilde{V}$ such that for all $X,Y \in SB[n]$, $\mu_n (X \dot\cup Y) = \mu_n X + \mu_n Y$, and $\mu_n X>0$ iff $\interior{X} \not= \emptyset$.  Furthermore, if $X\in SB[m]$ and $Y \in SB[n]$, then $\mu_{m+n} (X\times Y) = \mu_{m} X \cdot \mu_{n} Y$, and $\mu_n Y=\mu_n \phi (Y)$ whenever $\phi$ is an isomorphism $Y\to \phi (Y)$.
\end{theorem}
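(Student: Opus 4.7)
The theorem essentially collects results already obtained above. The map $\mu_n$ is given by Definition \ref{defmeasureinV}, with well-definedness from the lemma immediately following it, and isomorphism invariance is precisely Corollary \ref{invariance}. It remains to verify finite additivity, the positivity characterization, and the product formula; each of these I would reduce to the $[0,1]^n$ case via an affine scaling $T_\lambda(x) = \lambda x$ with $\lambda \in V^{>\ma}$ chosen small enough to map the sets in question into the unit cube.

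For finite additivity on disjoint $X, Y \in SB[n]$, I would pick a single $\lambda$ that works for $X \dot\cup Y$, then invoke Theorem \ref{main} to choose one cell decomposition $\mathcal{D}$ of $R^n$ simultaneously partitioning $T_\lambda X$ and $T_\lambda Y$. Since $X$ and $Y$ are disjoint, the cells of $\mathcal{D}$ below $T_\lambda(X \dot\cup Y)$ split into those below $T_\lambda X$ and those below $T_\lambda Y$, so Definition \ref{defmeasure}(2) directly delivers the additive identity; the common factor $\widetilde{\lambda^{-n}}$ then cancels.

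For the positivity characterization, the direction $\interior{X} = \emptyset \Rightarrow \mu_n X = \widetilde{0}$ is immediate, since a cell decomposition of $X$ then consists only of cells with some $i_j = 0$, each assigned measure $\widetilde{0}$ by Definition \ref{defmeasure}(1). For the converse, after the affine reduction, it suffices to show $\mu C > \widetilde{0}$ for some open cell $C = (f,g) \subseteq X$. I would prove this by induction on $n$: the base case is an interval $(a,b)$ with $\mu = \widetilde{b-a} > \widetilde{0}$. In the inductive step, fix any $x_0$ in the domain $D$ of $C$, set $z = h(x_0)/2$ for $h = g - f$, and use continuity of $h$ to observe that $h^{-1}[z, 1] \subseteq D$ is a definable subset of $[0,1]^{n-1}$ with nonempty interior. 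Induction then gives $\mu h^{-1}[z, 1] > \widetilde{0}$, and evaluating the lower sum at the partition $\{0, z, 1\}$ yields $\mu C \geq \widetilde{z} \cdot \mu h^{-1}[z, 1] > \widetilde{0}$.

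For the product formula on $X \in SB[m]$ and $Y \in SB[n]$, I would choose a single $\lambda \in V^{>\ma}$ small enough that $T_\lambda X \subseteq [0,1]^m$ and $T_\lambda Y \subseteq [0,1]^n$; then $T_\lambda \times T_\lambda$ sends $X \times Y$ into $[0,1]^{m+n}$. Combining Lemma \ref{products} applied to $T_\lambda X$ and $T_\lambda Y$ with the factorization $\widetilde{\lambda^{-(m+n)}} = \widetilde{\lambda^{-m}} \cdot \widetilde{\lambda^{-n}}$ from Lemma \ref{multcong} gives the identity. The main technical subtlety I anticipate is in the positivity induction: one must verify that strict positivity is preserved by the semiring product on $\widetilde{V}$, which follows because for $\widetilde{z}, W > \widetilde{0}$, the cut $\widetilde{z} \cdot W$ contains a positive representative $[zw]$ with $zw > 0$.
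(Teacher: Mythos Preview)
Your proposal is correct and follows essentially the same route as the paper: both proofs simply collect the earlier results (Definition~\ref{defmeasureinV}, Theorem~\ref{main}, Lemma~\ref{products}, Corollary~\ref{invariance}) and note that the remaining claims reduce to the $[0,1]^n$ case via the affine maps of Definition~\ref{defmeasureinV}.

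The one place where you and the paper differ slightly is the positivity characterization. For $\mu_n X>0\Rightarrow \interior X\neq\emptyset$ you argue directly from the cell decomposition (cells with some $i_j=0$ get measure $\widetilde 0$), which is the cleanest route; the paper instead cites Lemma~\ref{box} and Theorem~\ref{inv}, which also works but is heavier than necessary. For the converse $\interior X\neq\emptyset\Rightarrow\mu_n X>0$ you give an explicit induction on $n$ via a lower sum at $\{0,z,1\}$, whereas the paper just declares this ``immediate from the definition of $\mu_n$''; your argument is exactly what unpacking that phrase amounts to. One small point: your map $T_\lambda(x)=\lambda x$ omits the translation $b\in V^n$ that Definition~\ref{defmeasureinV} allows, and since sets in $SB[n]$ can have negative coordinates a pure scaling will not land in $[0,1]^n$; but this is a cosmetic fix and the determinant factor $\widetilde{\lambda^{-n}}$ you track is unaffected.
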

\begin{proof}
For a given $n$, let $\mu_n \colon SB[n] \to \widetilde{V}$ be as in Definition \ref{defmeasureinV}.  Finite additivity of $\mu_n$ follows from Theorem \ref{main}.  It follows from Lemma \ref{box} and Theorem \ref{inv} that for $X\in SB[n]$, $\mu_n (X)>0$ implies $\interior{X}\not=\emptyset$.  The reverse implication is immediate from the definition of $\mu_n$.  For $X\in SB[m]$ and $Y \in SB[n]$, $\mu_{m+n}(X\times Y)= \mu_{m}X \cdot \mu_{n}Y$ is implied by Lemma \ref{products}.
Finally, invariance under isomorphisms is Corollary \ref{invariance}.

\end{proof}

%\begin{remark}
%\em 
%Let $\lambda \in R^{>0}$.  For definable $X\subseteq [-\lambda ,\lambda ]^n$ we set 
%$\mu_{\lambda}(X):= \mu (T_{\lambda}X)$, where $T_{\lambda}$ is the linear transformation with diagonal transformation matrix $(a_{ij})$, where $a_{ii}=\frac{1}{\lambda}$ for $i=1,\dots ,n$.  Then for definable $X,Y \subseteq [-\lambda , \lambda ]^n$ such that  $X\dot\cup Y \subseteq [\lambda , \lambda]^n$, $\mu_{\lambda}$ has the properties listed in Theorem \ref{maintheorem}.
%\end{remark}

\end{section}

\begin{section}{A special case}
In this section, we assume that $R$ is such that for all $x,y \in \ma^{\geq 0}$, $x\sim y$ iff $v(x)=v(y)$.  We modify the definition of $\mu$ to obtain a finitely additive measure $\nu$ on all of $B[n]$, which takes values in the Dedekind completion of $\Gamma$, and is such that $\nu X>0$ iff $\interior{X}\not=\emptyset$.  The price we pay for extending the collection of measurable sets to $B[n]$, is that we need to identify all sets of ``finite, non-infinitesimal size''.  For example, $\nu X=\nu Y$ whenever $X,Y \in SB[n]$ are such that $\pi X$ and $\pi Y$ have non-empty interior.

Note that the condition $ x \sim y$ iff $v(x)=v(y)$, for all $x,y \in \ma^{\geq 0}$, is satisfied when the underlying set of $R$ is the field of Puiseux series
$\bigcup_n \mathbb{R}((t^{\frac{1}{n}}))$ in $t$ over $\mathbb{R}$.  The results of this section thus apply to the L-R field (see \cite{lr}, and Introduction).

\begin{definition}
Let $x,y \in R^{\geq 0}$.  Then $x\approx y$ iff $v(x)=v(y)$.
%\begin{itemize}
%\item $x,y \in \ma^{\geq 0}$ and $$y^q \leq x \leq y^p \mbox{ for all } p,q\in \Q^{>0}, \; p<1 , \; q>1, \mbox{ or}$$
%\item
%$x, y>\ma$ and $\pi x = \pi y$, or
%\item $x,y>V$ and $x^{-1}\sim y^{-1}$.
%\end{itemize}
\end{definition}
We define Dedekind cuts in $R^{\geq 0}/\approx$ analogously to Dedekind cuts in $V^{\geq 0}/\sim$ (see the paragraph above Definition \ref{operations}), and
we let $\widetilde{R}$ be the collection of all Dedekind cuts in $R^{\geq 0}/\approx$.  We define $\leq$ and $+$ and $\cdot$ on $\widetilde{R}$  as in Definition \ref{operations}, with $\widetilde{R}$ in place of $\widetilde{V}$.

The proof of the next lemma is straight-forward and left to the reader. 
\begin{lemma}
The operations $+$ and $\cdot$ are well-defined and make $\widetilde{R}$ into an ordered semiring.
\end{lemma}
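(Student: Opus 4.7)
The plan is to mimic the structure of Lemmas \ref{add}--\ref{multcong}, exploiting the fact that in this special case the equivalence relation admits a very clean characterization via the valuation: $x \approx y$ iff $v(x)=v(y)$, and $v$ interacts nicely with both ring operations on $R^{\geq 0}$.

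First I would verify that $\approx$ is a congruence with respect to $+$ and $\cdot$. For multiplication this is immediate from $v(xy)=v(x)+v(y)$: if $x\approx x'$ and $y\approx y'$, then $v(xy)=v(x)+v(y)=v(x')+v(y')=v(x'y')$, so $xy\approx x'y'$. For addition on $R^{\geq 0}$, suppose $x\approx x'$, $y\approx y'$, and WLOG $v(x)\leq v(y)$. Since all four elements are nonnegative there is no cancellation, so $v(x+y)=\min\{v(x),v(y)\}=v(x)$, and likewise $v(x'+y')=v(x')=v(x)$. Hence $x+y\approx x'+y'$.

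Next I would check that $+$ and $\cdot$ really do send $\widetilde{R}\times\widetilde{R}$ into $\widetilde{R}$. Downward closure and containment of $R^{<0}/\approx$ are immediate from the definitions. For the absence of a greatest element in $X+Y$, given $x\in\bigcup X$ and $y\in\bigcup Y$ with (say) $v(x)\leq v(y)$, pick $x'\in\bigcup X$ with $[x']>[x]$, which forces $v(x')<v(x)$ (taking $x,x'$ both positive; the case $X=\widetilde{0}$ is trivial). Then $v(x+y)=v(x)>v(x')=v(x'+y)$, so $[x+y]<[x'+y]$ in $R^{\geq 0}/\approx$. For $X\cdot Y$, an analogous argument: pick $x',y'$ with $v(x')<v(x)$, $v(y')<v(y)$, then $v(x'y')=v(x')+v(y')<v(x)+v(y)=v(xy)$, so $[xy]<[x'y']$.

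With well-definedness established, the ordered semiring axioms descend from the ordered ring axioms of $R$. Commutativity, associativity, and distributivity hold on representatives and survive passage to the quotient because $\approx$ is a congruence. The cuts $\widetilde{0}$ and $\widetilde{1}$ are the additive and multiplicative identities, as can be read off directly from the defining formulas in Definition \ref{operations} (in the form adapted to $\widetilde{R}$). For the order axioms, monotonicity of $+$ and $\cdot$ with respect to $\leq$ on $\widetilde{R}$ reduces to monotonicity on representatives in $R^{\geq 0}$, which is standard.

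I expect no real obstacle: the main technical step is ensuring the no-greatest-element property for the cuts $X+Y$ and $X\cdot Y$, but in contrast to the proof of Lemmas \ref{add} and \ref{mult}, here we do not need to split into the cases ``$x\in\ma^{>0}$'' versus ``$x>\ma$'', because the single relation $v(x)=v(y)$ handles all scales uniformly. The absence of the pathology described in the last bullet of Remark \ref{rem}, ruled out precisely by the hypothesis of this section, is what makes multiplication well-defined on all of $\widetilde{R}$ rather than only on $\widetilde{V}$.
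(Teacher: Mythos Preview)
Your proposal is correct and is precisely the kind of argument the paper has in mind: the paper's own ``proof'' of this lemma reads in full ``Straight-forward and left to the reader,'' so you are supplying exactly the omitted details, following the template of Lemmas~\ref{add}--\ref{multcong} but streamlined by the valuation identity $v(x+y)=\min\{v(x),v(y)\}$ for $x,y\in R^{\geq 0}$ and $v(xy)=v(x)+v(y)$. Note that the congruence facts you establish along the way also dispatch the next lemma in the paper (Lemma~\ref{newcong}), which is likewise left to the reader.
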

%Let $X,Y \in \widetilde{R}$ with $X\leq Y$.  
%\begin{trivlist}
%\item[{\bf Case 1:\/}] There is no $y\in Y$ with $v(y)=0$.
%Assume towards a contradiction that $[x+y]$ is the greatest element of $X+Y$, where $x\in X$ and $y\in Y$.  We can find $y'\in Y$ with $v(y')<v(y)$ and $v(y')\leq v(x)$, hence $[x+y]<[x+y']$, a contradiction.  One shows in a similar way that $X\cdot Y$ does not have a greatest element.
%\item[{\bf Case 2:\/}] There is $y\in Y$ with $v(y)=0$ but there is no $y\in Y$ with $v(y)<0$.
%Cannot happen.
%\item[{\bf Case 3:\/}] There is $y\in Y$ with $v(y)<0$.
%\end{trivlist}
For $x\in V^{\geq 0}$ we shall abuse notation by identifying the element $\widetilde{x} \in \widetilde{V}$ with its image in $\widetilde{R}$ under the $(+,\cdot , 0, 1)$-homomorphism induced by the map $$V^{\geq 0}/\sim \; \rightarrow R^{\geq 0}/\approx : [x]_{\sim} \mapsto [x]_{\approx}.$$
%For $x\in R^{\geq 0}$, we abuse notation and denote the corresponding element of $\widetilde{R}$ by $\widetilde{x}$.
\begin{lemma}\label{newcong}
For all $x,y \in R^{\geq 0}$, $\widetilde{x}+\widetilde{y}=\widetilde{x+y}$ and $\widetilde{x}\cdot \widetilde{y}=\widetilde{x\cdot y}$.
\end{lemma}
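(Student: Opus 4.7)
The plan is to mimic the proofs of Lemmas \ref{addcong} and \ref{multcong}, which established the analogous fact for $\sim$ on $V^{\geq 0}$. Exactly as in those proofs, it suffices to establish two statements for each of the operations $+$ and $\cdot$: \textbf{(a)} $\approx$ is a congruence, i.e., $x' \approx x$ and $y' \approx y$ imply $x' + y' \approx x + y$ and $x' y' \approx x y$; and \textbf{(b)} the surjectivity statement, that for every $z \approx x + y$ (resp.\ $z \approx xy$) there exist $x' \approx x$ and $y' \approx y$ with $z = x' + y'$ (resp.\ $z = x' y'$).

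For multiplication, both parts follow directly from the fact that $v\colon R^\times \to \Gamma$ is a group homomorphism. For (a): if $x, y, x', y'$ are all nonzero, then $v(x' y') = v(x') + v(y') = v(x) + v(y) = v(xy)$, so $x' y' \approx xy$; the case where some factor is zero is trivial since then the two products are both $0$. For (b): if $x = 0$ then $xy = 0$, hence $z = 0$, and we take $x' = 0$, $y' = y$; otherwise set $x' = x$ and $y' = z/x \in R^{\geq 0}$, and check $v(y') = v(z) - v(x) = v(xy) - v(x) = v(y)$.

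For addition, the key observation is that since $x, y \in R^{\geq 0}$ there is no cancellation, so $v(x + y) = \min\{v(x), v(y)\}$ (with $v(0) = \infty$). Part (a) then follows immediately: $v(x' + y') = \min\{v(x'), v(y')\} = \min\{v(x), v(y)\} = v(x+y)$. For (b), assume WLOG that $v(x) \leq v(y)$, whence $v(z) = v(x + y) = v(x)$. If $v(x) < v(y)$, set $y' = y$ and $x' = z - y$; then $v(y/z) = v(y) - v(z) > 0$ forces $y/z$ to be infinitesimal, so $z - y = z(1 - y/z)$ is strictly positive and $v(x') = v(z) + v(1 - y/z) = v(z) = v(x)$. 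If instead $v(x) = v(y)$, set $x' = y' = z/2$, both of which have valuation $v(z) = v(x) = v(y)$, so trivially $x' \approx x$ and $y' \approx y$.

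The only real work, if one can call it an obstacle, is the passage from (a)--(b) to the asserted Dedekind cut equalities $\widetilde{x} + \widetilde{y} = \widetilde{x+y}$ and $\widetilde{x} \cdot \widetilde{y} = \widetilde{xy}$. This passage is precisely what was carried out inside Lemmas \ref{addcong} and \ref{multcong}, and the argument transposes without change; if anything, the case analysis is cleaner here, because $\approx$ is defined purely in terms of valuation equality and one never needs to unpack the rational exponents appearing in Definition \ref{equivalence}. The degenerate cases $x = 0$, $y = 0$, $z = 0$ demand only routine bookkeeping and are dispatched by the obvious trivial choices.
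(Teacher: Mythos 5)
Correct; your argument fills in exactly the details that the paper labels ``straight-forward and left to the reader,'' mirroring the structure of the proofs of Lemmas \ref{addcong} and \ref{multcong} (reduction to the congruence statement (a) and the representability statement (b), followed by a valuation-theoretic verification). The observations that $v(x+y)=\min\{v(x),v(y)\}$ on $R^{\geq 0}$ and that $v$ is a homomorphism on $R^{>0}$ are precisely the simplifications that make the $\approx$ case cleaner than the $\sim$ case, as you note.
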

\begin{proof}
Straight-forward and left to the reader.
%We may assume that $x\leq y$.
%To see that $\widehat{x}+\widehat{y}=\widehat{x+y}$, it suffices to show that if $x'\approx x$ and $y'\approx y$, then $x'+y'\approx x+y$, and if $z\approx x+y$, then there are $x'\approx x$ and $y'\approx y$ with $x'+y'=z$.  So let $x'\approx x$ and $y'\approx y$.  Then $v(x'+y')=v(y')=v(y)=v(x+y)$, hence $x'+y' \approx x+y$.  Now let $z\approx x+y$.  Then $v(z)=v(y)$ and either $v(z-x)=v(y)$, in which case we set $x'=x$ and $y'=y$, or $v(x)=v(y)=v(z)$, in which case we set $x'=\frac{z}{2}$ and $y'=\frac{z}{2}$.
%It is clear that if $x'\approx x$ and $y'\approx y$, then $v(x' y')=v(xy)$.  So suppose $z \approx xy$.  Then $x'=x$ and $y'=\frac{z}{x}$ are such that $z=x'y'$, $x'\approx x$ and $y'=y$:  $v(\frac{z}{x})=v(z)-v(x)=v(y)$, because $v(z)=v(x)+v(y)$.
\end{proof}
\begin{definition}\label{defmeasureinR}
For $X\in B[n]$ we set $$\nu X := \widetilde{\frac{1}{\det{A}}}\cdot \mu (TX),$$ where $$T: R^n \to R^n : x\mapsto Ax+b$$ is an affine map with a diagonal affine transformation matrix $A=(a_{ij})$ such that $a_{ii}=\lambda \in (0,1]$ for $i=1,\dots ,n$, $b\in R^{n}$, and $TX \subseteq [0,1]^n$.
\end{definition}
The next Lemma shows that $\nu X$ is well-defined.
\begin{lemma}
Let $X\in B[n]$ be definable, and let $$T:R^n \to R^n : x\mapsto Ax+b \mbox{ and } T':R^n \to R^n : x \mapsto A'x+b'$$ be affine transformations for $X$ as in Definition \ref{defmeasureinR}.  Then $$  \widetilde{\frac{1}{\det{A}}} \cdot \mu (TX)=  \widetilde{\frac{1}{\det{A'}}} \cdot \mu (T'X) .$$
\end{lemma}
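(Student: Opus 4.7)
I plan to mirror the proof of the analogous lemma for $\mu$ on $SB[n]$, but account for the fact that now the scaling factors $\lambda,\lambda'$ can lie anywhere in $(0,1]$, so the reasoning must be carried out inside $\widetilde R$ rather than $\widetilde V$. Set $Y:=TX$ and $Y':=T'X\subseteq[0,1]^n$, and let $S:=T\circ(T')^{-1}$, which has the form $y\mapsto\alpha y+c$ with $\alpha=\lambda/\lambda'$; after possibly interchanging $T$ and $T'$ I may assume $\alpha\in(0,1]$. Lemma \ref{newcong} gives $\widetilde{\lambda^n}\cdot\widetilde{1/\lambda^n}=\widetilde 1$ in $\widetilde R$, so the equation to prove rewrites as $\mu Y=\widetilde{\alpha^n}\cdot\mu Y'$. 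Because $Y=\alpha Y'+c$ with $c\in V^n$ and translation by $c$ is an isomorphism in the sense of Definition \ref{iso} (its Jacobian determinant is $1$), Corollary \ref{invariance} gives $\mu(\alpha Y'+c)=\mu(\alpha Y')$, and the problem reduces to the scaling identity
\[
\mu(\alpha Z)\;=\;\widetilde{\alpha^n}\cdot\mu Z
\]
for every definable $Z\subseteq[0,1]^n$ with $\alpha Z\subseteq[0,1]^n$ and $\alpha\in(0,1]$.

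Two computational ingredients underlie this scaling identity. First, for any box $B=\prod_i[a_i,b_i]$ one has $\alpha B=\prod_i[\alpha a_i,\alpha b_i]$, and Lemmas \ref{products} and \ref{newcong} immediately give $\mu(\alpha B)=\widetilde{\alpha^n}\cdot\mu B$. Second, scaling commutes with the cell-normalizing map $\tau$: using $\tau_k(x)=x_k-f_k(x_1,\ldots,x_{k-1})$ and computing the cell-data of $\alpha C$ one verifies $\tau_{\alpha C}(\alpha x)=\alpha\tau_C(x)$, hence $\tau_{\alpha C}(\alpha C)=\alpha\,\tau_C(C)$. With these in hand, I prove the scaling identity by cases on $Z$. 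If $\interior Z=\emptyset$, both sides are $\widetilde 0$ by Theorem \ref{maintheorem}. If $\interior{\pi Z}=\emptyset$ but $\interior Z\neq\emptyset$, I sandwich $\mu(\alpha Z)$ using Lemmas \ref{box} and \ref{boxlemma}: given $\widetilde a<\mu Z$, Lemma \ref{box} supplies a cell $C\subseteq Z$ and a box $B\subseteq\tau_C C$ with $\mu B>\widetilde a$; then $\alpha C\subseteq\alpha Z$, $\alpha B\subseteq\tau_{\alpha C}(\alpha C)$ and $\mu(\alpha B)>\widetilde{\alpha^n}\cdot\widetilde a$, so passing to the supremum yields $\mu(\alpha Z)\geq\widetilde{\alpha^n}\cdot\mu Z$; the reverse inequality follows analogously from Lemma \ref{boxlemma}. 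If $\interior{\pi Z}\neq\emptyset$ and $\alpha\in V^{>\ma}$, the identity reduces to the usual Lebesgue-measure scaling in $\mathbb R_0$. The remaining subcase is $\interior{\pi Z}\neq\emptyset$ with $\alpha\in\ma^{>0}$: then $\interior{\pi(\alpha Z)}=\emptyset$, and the upper bound $\mu(\alpha Z)\leq\mu([0,\alpha]^n)=\widetilde{\alpha^n}$ together with the lower bound $\mu(\alpha B_0)\leq\mu(\alpha Z)$ (where a box $B_0\subseteq Z$ with $\interior{\pi B_0}\neq\emptyset$ is supplied by Fact \ref{vbox}) forces $\mu(\alpha Z)=\widetilde{\alpha^n}$, while on the other side $\widetilde{\alpha^n}\cdot\mu Z=\widetilde{\alpha^n}$ because $\mu Z$ is a positive real and the standing hypothesis $x\sim y\iff v(x)=v(y)$ on $\ma^{\geq 0}$ collapses multiplication by positive reals in the infinitesimal part of $\widetilde V$.

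The main obstacle is exactly this last subcase, where Lemmas \ref{box} and \ref{boxlemma} do not directly apply to $Z$ (they require $\interior{\pi Z}=\emptyset$), so one has to extract the scaling property from a combination of the box-from-below of Fact \ref{vbox}, the trivial box-from-above $[0,\alpha]^n$, and the section's standing hypothesis on $\sim$, which identifies $\widetilde{\alpha^n}\cdot\widetilde r$ with $\widetilde{\alpha^n}$ for any $r\in V^{>\ma}$. Beyond this, matching up the box-measure approximations uniformly across the various regimes (finite versus infinitesimal $\alpha$, $\mu Z$ real versus non-real) and ensuring that the identifications in $\widetilde V$ are compatible with the embedding $\widetilde V\hookrightarrow\widetilde R$ is the main technical care required.
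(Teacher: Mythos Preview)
Your argument is correct and follows essentially the paper's approach: reduce the statement to the scaling identity $\mu(\alpha Z)=\widetilde{\alpha^n}\cdot\mu Z$ and verify it via the box case together with the inner and outer box approximations of Lemmas~\ref{box} and~\ref{boxlemma}. You are in fact more explicit than the paper in splitting off the subcase $\interior{\pi Z}\neq\emptyset$ with $\alpha\in\ma^{>0}$; one small slip is that the map $\widetilde V\to\widetilde R$ is a surjective homomorphism rather than an embedding (all positive reals collapse to $\widetilde 1$), but your argument does not actually use injectivity, and the identity $\widetilde{\alpha^n}\cdot\widetilde r=\widetilde{\alpha^n}$ for $r\in V^{>\ma}$ already follows from $v(\alpha^n r)=v(\alpha^n)$ without invoking the section's standing hypothesis.
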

\begin{proof}
We set $Y=TX$ and $$S=T'\circ T^{-1}\colon [0,1]^n \to [0,1]^n .$$  
Then $S$ is an affine map with diagonal transformation matrix $(a_{ij})$, where $a_{ii}=\alpha \in R^{>0}$ for $i=1,\dots ,n$.  It suffices to show that
$\mu Y = \widetilde{\frac{1}{\alpha^n}}\cdot \mu SY$.  
This is clearly satisfied if $\interior{\pi Y}\not=\emptyset$, since then $\widetilde{\alpha }=\widetilde{1}$.  It also holds in the case when $Y$ is a box.  So if $\mu Y <\widetilde{1}$, then the lemma is implied by Lemma \ref{box} and Lemma \ref{boxlemma}.
\end{proof}
It is now clear that we have an analog of Theorem \ref{maintheorem}:
\begin{theorem}\label{secondthm}
Suppose $R$ is such that for all $x,y \in \ma^{>0}$, $v(x)=v(y)$ iff $x^{q}\leq y \leq x^{p}$ for all $p,q \in \mathbb{Q}^{>0}$ with $p<1<q$.
Then,
for each $n$,
there is a map $\mu_n \colon B[n] \to \widetilde{R}$ such that for all $X,Y \in B[n]$, $\mu_n (X \dot\cup Y) = \mu_n X + \mu_n Y$, and $\mu_n X>0$ iff $\interior{X} \not= \emptyset$.  Furthermore, if $X\in B[m]$ and $Y \in B[n]$, then $$\mu_{m+n} (X\times Y) = \mu_{m} X \cdot \mu_{n} Y,$$  and $\mu_n Y=\mu_n \phi (Y)$ whenever $\phi$ is an isomorphism $Y\to \phi (Y)$.
\end{theorem}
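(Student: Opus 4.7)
The plan is to reduce each clause of Theorem \ref{secondthm} to its counterpart already proved for $\mu$ on $SB[n]$, by pushing sets forward along an affine map $T\colon R^n \to R^n$, $x \mapsto Ax + b$, as in Definition \ref{defmeasureinR} (so $A = \lambda I$ for some $\lambda \in (0,1]$ chosen small enough that all relevant sets land in $[0,1]^n$). Well-definedness of $\mu_n$ is already handled by the lemma immediately preceding the theorem.

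For finite additivity, given disjoint $X, Y \in B[n]$, I would pick a single $T$ sending $X \cup Y$ into $[0,1]^n$, observe that $T(X \dot\cup Y) = TX \dot\cup TY$ lies in $SB[n]$, apply Theorem \ref{maintheorem} to obtain $\mu(TX \dot\cup TY) = \mu(TX) + \mu(TY)$, then multiply by $\widetilde{1/\det A}$ and invoke distributivity in the ordered semiring $\widetilde{R}$. For positivity, $T$ is a diffeomorphism, so $\interior{X} \not= \emptyset$ iff $\interior{TX} \not= \emptyset$ iff $\mu(TX) > \widetilde{0}$ iff $\mu_n X > \widetilde{0}$, the middle equivalence coming from Theorem \ref{maintheorem} and the last from $\widetilde{1/\det A} > \widetilde{0}$.

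For the product formula on $X \in B[m]$ and $Y \in B[n]$, I would take affine maps $T_X, T_Y$ of the required form and observe that $T_X \times T_Y$ is a permitted affine map for $X \times Y$ with block-diagonal matrix; Lemma \ref{products} applied to $T_X X$ and $T_Y Y$, followed by Lemma \ref{newcong} to rearrange the scalar factors, yields the product identity. For invariance under an isomorphism $\phi \colon X \to Y$, I would select one $T$ sending $X \cup Y$ into $[0,1]^n$ and verify by the chain rule that $T \circ \phi \circ T^{-1} \colon TX \to TY$ is an isomorphism in the sense of Definition \ref{iso}: since $T$ and $T^{-1}$ have constant Jacobian determinants $\lambda^n$ and $\lambda^{-n}$, these cancel and leave $|J(T \circ \phi \circ T^{-1})| = |J\phi| \circ T^{-1} = 1$ on $TX$, so Corollary \ref{invariance} gives $\mu(TX) = \mu(TY)$ and hence $\mu_n X = \mu_n Y$.

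The entire argument is essentially bookkeeping: the hard work was carried out in constructing $\mu$ on $SB[n]$, and the present theorem needs only the extra observation that the special hypothesis on $R$ forces $\sim$ and $\approx$ to agree on $\ma^{>0}$, so that the natural map $\widetilde{V} \hookrightarrow \widetilde{R}$ is a $(+,\cdot)$-homomorphism and the scalar factor $\widetilde{1/\det A}$ interacts correctly with the semiring operations of $\widetilde{R}$. The only point deserving mild care is the simultaneous choice of a common $T$ for the two sets appearing in each clause, which is always possible since $B[n]$ consists of bounded sets and any finite collection can be jointly scaled into $[0,1]^n$; this sidesteps any question of dependence of $\mu_n X$ on the particular $T$ beyond what the preceding lemma already provides.
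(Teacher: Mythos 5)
Your approach is essentially the one the paper intends: the paper gives no explicit proof of Theorem~\ref{secondthm} beyond ``it is now clear,'' and the intended argument is exactly the reduction you carry out---push forward along an affine contraction $T$ as in Definition~\ref{defmeasureinR}, apply Theorem~\ref{maintheorem}, Lemma~\ref{products} and Corollary~\ref{invariance} inside $[0,1]^n$, and pull back the scalar $\widetilde{1/\det A}$ using Lemma~\ref{newcong} and semiring distributivity in $\widetilde{R}$; well-definedness is the preceding lemma. Your conjugation argument $T\circ\phi\circ T^{-1}$ for invariance, with the Jacobian determinants $\lambda^n$ and $\lambda^{-n}$ cancelling, is correct.

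Two small points of precision. First, Definition~\ref{defmeasureinR} requires the diagonal matrix $A$ to be $\lambda I$ for a \emph{single} scalar $\lambda\in(0,1]$, so $T_X\times T_Y$ is a permitted affine map for $X\times Y$ only if $T_X$ and $T_Y$ use the same $\lambda$; you should say explicitly that you choose a common $\lambda$ (the smaller of the two works), after which your product computation goes through via $\widetilde{1/\lambda^{m+n}}=\widetilde{1/\lambda^m}\cdot\widetilde{1/\lambda^n}$. Second, the natural map $\widetilde{V}\to\widetilde{R}$ is not an embedding (your ``$\hookrightarrow$'' is misleading): it collapses all of $V^{>\ma}/\sim\cong\mathbb{R}^{>0}$ to one $\approx$-class, which is precisely the ``identify all finite non-infinitesimal sizes'' phenomenon the paper flags at the start of Section~6. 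What your argument actually uses, and all that is needed, is that the assignment on principal cuts $\widetilde{x}\mapsto\widetilde{x}$ respects $+$ and $\cdot$ (Lemma~\ref{newcong} together with Lemmas~\ref{addcong} and~\ref{multcong}), and that the standing hypothesis forces $\sim$ and $\approx$ to coincide on $\ma^{>0}$ so that $\nu$ retains the positivity property.
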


\end{section}

%\begin{section}{A real valued measure and invariance properties}
%ll

%\end{section}

\end{document}